\documentclass[11pt]{amsart}
\pdfoutput=1
\usepackage[latin1]{inputenc}
\usepackage{amsfonts,caption,comment,lscape}
\usepackage{amsmath, threeparttable,supertabular,longtable}
\usepackage{amssymb}
\usepackage{amsthm}
\usepackage[T1]{fontenc}
\usepackage[dvips]{graphicx}
\usepackage{pdfsync}
\usepackage{pifont}

\newcommand{\GL}{\mathrm{GL}}
\newcommand{\SL}{\mathrm{SL}}
\newcommand{\SU}{\mathrm{SU}}
\newcommand{\Aut}{\mathrm{Aut}}
\newcommand{\GU}{\mathrm{GU}}
\newcommand{\Sp}{\mathrm{Sp}}

\renewcommand{\O}{\mathrm{O}}

\newcommand{\PGU}{\mathrm{PGU}}
\newcommand{\Inndiag}{\mathrm{Inndiag}}
	
\newcommand{\PSL}{\mathrm{PSL}}
\newcommand{\Om}{\mathrm{\Omega}}
\newcommand{\POm}{\mathrm{P\Omega}}
\newcommand{\PSp}{\mathrm{PSp}}
\newcommand{\PSU}{\mathrm{PSU}}

% command to reduce the gap before \mod
\makeatletter
\def\imod#1{\allowbreak\mkern7mu({\operator@font mod}\,\,#1)}
\makeatother

%_%_%_%_% plain style; italic text
    \newtheoremstyle{plain}%
        {8pt plus2pt minus4pt}%
        {8pt plus2pt minus4pt}%
        {\itshape}%
        {}%
        {\bfseries\scshape}%
        {}%
        {1em}%
        {}%

%_%_%_%_% upright style; roman (non-italic) text
    \newtheoremstyle{upright}%
        {8pt plus2pt minus4pt}%
        {8pt plus2pt minus4pt}%
        {\upshape}%
        {}%
        {\bfseries\scshape}%
        {}%
        {1em}%
        {}%

\theoremstyle{plain}
\newtheorem{thm}{Theorem}[section]
\newtheorem{lem}[thm]{Lemma}
\newtheorem{lemma}[thm]{Lemma}

\newtheorem{cor}[thm]{Corollary}

\theoremstyle{upright}
\newtheorem{remark}[thm]{Remark}

\title[Further analogues of Baer--Suzuki]{Further solvable analogues of the Baer--Suzuki theorem and generation of nonsolvable groups}
\author{Simon Guest}
%\includecomment{comment}
%\includecomment{explain}
\begin{document}

\begin{abstract}
Let $G$ be an almost simple group. We prove that if $x \in G$ has prime order $p \ge 5$, then there exists an involution $y$ such that $\langle x,y\rangle$ is not solvable. Also, if $x$ is an involution then there exist three conjugates of $x$ that generate a nonsolvable group, unless $x$ belongs to a short list of exceptions, which are described explicitly. We also prove that if $x$ has order $6$ or $9$, then there exists two conjugates that generate a nonsolvable group. 
\end{abstract}
\maketitle

\section{Introduction}
The following theorem is proved in \cite{Guest1}, and provides a solvable analogue of the classical Baer--Suzuki theorem for elements of certain orders.

\begin{thm} \label{thmA}
Let $G$ be a finite group and suppose that $x$ is an element of prime order $p$ where $p\ge5$. Then $x$ is contained in the solvable radical of $G$ if and only if $\langle x,x^g\rangle$ is solvable for all $g\in G$. In other words, if $x$ is not contained in the solvable radical of $G$ then there exists $g \in G$ such that $\langle x,x^g\rangle$ is not solvable.
\end{thm}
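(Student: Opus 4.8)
The plan is to prove the nontrivial direction in contrapositive form: assuming $x \notin R(G)$, where $R(G)$ denotes the solvable radical, I will produce some $g \in G$ with $\langle x, x^g\rangle$ nonsolvable. The reverse implication is immediate, since $R(G)$ is a normal solvable subgroup: every conjugate $x^g$ lies in $R(G)$, so $\langle x, x^g\rangle \le R(G)$ is solvable. For the main direction I would first make two standard reductions. Passing to $\bar G = G/R(G)$, the image $\bar x$ still has order $p$ (it is nontrivial because $x \notin R(G)$, and $p$ is prime), and a nonsolvable $\langle \bar x, \bar x^{\bar g}\rangle$ lifts to a nonsolvable $\langle x, x^g\rangle$; so I may assume $R(G) = 1$. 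Replacing $G$ by the normal closure $\langle x^G\rangle$ (whose solvable radical is again trivial, being characteristic in a normal subgroup) I may also assume $G = \langle x^G\rangle$.

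With $R(G) = 1$ we have $F^*(G) = E(G) = L_1 \times \cdots \times L_t$, a direct product of nonabelian simple groups, and $C_G(F^*(G)) = 1$, so $G$ embeds in $\Aut(F^*(G))$ and $x$ acts faithfully on the components. I would then split according to the permutation action of $x$ on $\{L_1, \dots, L_t\}$. If $x$ moves some component, then (as $p$ is prime) it cyclically permutes an orbit $\{L_1, \dots, L_p\}$; here I would choose $g \in L_1 \times \cdots \times L_p$ with coordinates $(a_1, \dots, a_p)$ so that the consecutive ``ratios'' $a_i^{-1}a_{i+1}$ generate $L_1$. Then $x^{-1}x^g$ and its $\langle x\rangle$-conjugates all lie in $L_1 \times \cdots \times L_p$ and project onto the whole of $L_1$, so $\langle x, x^g\rangle$ has a nonsolvable image and is nonsolvable; this step uses only that finite simple groups are $2$-generated. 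If instead $x$ normalizes every component, then since $C_G(F^*(G)) = 1$ it acts nontrivially on some $L = L_1$, inducing an automorphism of order exactly $p$; conjugating $x$ by elements $s \in L$ and projecting into the almost simple group $A = \langle \mathrm{Inn}(L), \bar x\rangle$, it suffices to solve the problem there.

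This reduces everything to the \emph{almost simple case}, which is the heart of the matter and which I expect to be the main obstacle: if $A$ is almost simple with socle $S$ and $\bar x \in A$ has prime order $p \ge 5$, then there exists $s \in S$ with $\langle \bar x, \bar x^s\rangle$ nonsolvable. I would attack this by the classification of finite simple groups, treating alternating, sporadic, and Lie type socles in turn. For $S = A_n$ one argues through primitive permutation groups, and the sporadic groups are a finite check. The substantial work is for the groups of Lie type, where one must show that a conjugate pair built from an element of order $p \ge 5$ cannot always be trapped inside a solvable subgroup (Borel subgroups, torus normalizers, and the other solvable members of Aschbacher's subgroup classes), requiring separate treatment of the classical and exceptional families with careful attention to small rank and small characteristic. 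The hypothesis $p \ge 5$ is essential precisely here: it rules out the solvable configurations that genuinely occur for smaller orders, such as the dihedral groups $\langle t, t^g\rangle$ generated by two involutions and the $A_4$- and $S_4$-type subgroups associated with elements of order $2$ and $3$. This is exactly why the cases of involutions and of elements of order $6$ and $9$ require the separate analyses carried out in the remainder of the paper.
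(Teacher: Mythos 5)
Your reduction is correct and follows essentially the same route as the paper: pass to $G$ with trivial solvable radical, use the components of $F^*(G)$ to show that a counterexample must be almost simple (your two cases, according to whether $x$ permutes the components, are the standard way of carrying out the induction described in \cite{Guest1}), and then settle the almost simple case by a CFSG analysis. That almost simple case, which you rightly flag as the heart of the matter but only sketch, is precisely Theorem \ref{thmA*}, which the paper states and cites from \cite{Guest1} rather than reproving; so your outline and the paper's proof of Theorem \ref{thmA} coincide.
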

The proof of Theorem \ref{thmA} is by induction, and it is shown that a minimal counterexample to Theorem \ref{thmA} would have to be an almost simple group. Theorem \ref{thmA} is then proved (in \cite{Guest1}) with the following result for almost simple groups. 

\begin{thm} \label{thmA*}
Let $G$ be an almost simple group with socle $G_0$. Let $x \in G$ have odd prime order $p$. Then one of the following holds.
\begin{enumerate}
\item[(1)] There exists $g \in G$ such that $\langle x,x^g\rangle$ is not solvable;
\item[(2)] $p=3$ and $x$ is a long root element in a simple group of Lie type defined over $\mathbb{F}_{3}$, $x$ is a short root element in $G_2(3)$, or  $x$ is a pseudoreflection and $G_0\cong PSU(d,2)$.
\end{enumerate}
\end{thm}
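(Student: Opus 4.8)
The proof must proceed by appeal to the Classification of Finite Simple Groups, treating each possibility for the socle $G_0$ in turn. Since $G$ is almost simple its solvable radical is trivial (any solvable normal subgroup meets $G_0$ trivially and then centralises it, forcing it into $C_G(G_0)=1$), so the content of the statement is that outcome (1) is the generic behaviour and that a family of solvable pairs $\langle x,x^g\rangle$ can persist for \emph{all} $g$ only in the narrow list of outcome (2). The plan is therefore uniform: for each isomorphism type of $G_0$ and each conjugacy class of elements $x\in G$ of odd prime order $p$, either exhibit a single conjugate $x^g$ with $\langle x,x^g\rangle$ nonsolvable, or prove that no such conjugate exists and verify that $x$ occurs in (2).

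First I would dispose of the ``small'' socles. For the sporadic groups, and for the finitely many genuinely small alternating and Lie type groups, the conjugacy class of $x$ and the relevant structure constants are available, so a finite computation that locates, for each class, a conjugate $x^g$ generating a group with a nonabelian composition factor settles outcome (1); one checks directly that no new exceptions arise. For $G_0=A_n$ an element of odd prime order $p$ is a product of $p$-cycles, and a short combinatorial argument produces a conjugate whose support overlaps that of $x$ so that $\langle x,x^g\rangle$ acts on a small subset as $A_m$ with $m\ge5$; for $p=3$, two interlocking $3$-cycles sharing a point already generate $A_5$. Thus the alternating groups contribute no exceptions.

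The substance of the proof is the groups of Lie type, which I would split according to whether $p$ is the defining characteristic. Let $\overline{G}$ be the ambient simple algebraic group and track the action of $\langle x,x^g\rangle$ on the natural (or adjoint) module. For semisimple $x$ (with $p$ coprime to the characteristic) the centraliser $C_{\overline{G}}(x)$ is reductive of smaller rank, and the strategy is to choose $g$ so that $\langle x,x^g\rangle$ is irreducible, or contains a subsystem subgroup with a nonsolvable Lie-type factor, typically an $\SL_2(q)$ with $q\ge4$ or a rank-$2$ group, which forces nonsolvability and hence (1). For unipotent $x$ (with $p$ equal to the characteristic) the decisive invariant is the subgroup generated by two root elements: as the conjugate varies, $\langle x,x^g\rangle$ lies between an abelian or unipotent group and a group whose only possible nonsolvable section is an $\SL_2(q)$, or in the exceptional root geometries a rank-$2$ subsystem group, over the field of definition. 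Since $\SL_2(q)$ is solvable exactly when $q\le3$, this mechanism yields nonsolvable pairs for every $q\ge4$, whereas over $\mathbb{F}_3$ two long root elements can only generate $\SL_2(3)$-type or unipotent subgroups, which are solvable; this is precisely outcome (2). The same analysis isolates the short root elements of $G_2(3)$, and in the semisimple characteristic-$2$ setting the pseudoreflections of $\PSU(d,2)$, whose non-fixed action is confined to a $2$-dimensional unitary reflection configuration over $\mathbb{F}_4$ that never reaches a nonsolvable group.

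The main obstacle, and where the bulk of the work lies, is exactly the low-rank groups of Lie type over the small fields $\mathbb{F}_2$, $\mathbb{F}_3$ and $\mathbb{F}_4$. Here the subgroups $\langle x,x^g\rangle$ are borderline: the generic mechanisms degenerate, $\SL_2(q)$ ceases to be nonsolvable, and one must argue by an essentially exhaustive examination of the possible isomorphism types of $\langle x,x^g\rangle$ to decide, class by class, whether a nonsolvable pair genuinely exists or whether the element must be recorded as an exception. The delicate heart of the argument is confirming that the list in (2) is both correct and complete: that long root elements over $\mathbb{F}_3$ admit no nonsolvable pair at all, across every Lie type, while their counterparts over larger fields always do.
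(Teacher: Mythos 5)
First, a point of reference: this paper does not actually prove Theorem \ref{thmA*} --- it is imported from \cite{Guest1} --- so the only benchmark available here is the closely analogous proofs of Theorems \ref{inv} and \ref{thm:9}, which do follow the CFSG-driven architecture you describe (case division by socle, minimal counterexample, reduction into smaller almost simple subgroups, and machine computation for small groups). At that level your plan is the right one, and your identification of why the exceptions in (2) are genuine (two long root subgroups over $\mathbb{F}_3$ generate only unipotent or $\SL_2(3)$-type groups; two pseudoreflections of $\GU(d,2)$ act nontrivially only on a low-dimensional subspace, landing in the solvable groups $\GU(2,2)$, $\GU(3,2)$) is correct.

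Nevertheless there are concrete gaps. The most serious is that you never treat outer automorphisms of order $p$: field automorphisms of order $p$ for every odd $p$, and, for $p=3$, graph and graph-field automorphisms of $D_4(q)$ (triality). These lie outside your semisimple/unipotent dichotomy entirely; the working proofs in this paper devote a separate reduction to them (Lemma \ref{lem:outinvs}: find a subgroup normalized but not centralized by $x$, or run the counting argument of Lemma \ref{lem:countinv} over the maximal overgroups of $C_{G_0}(x)$), and the triality case is substantial. Second, your unipotent analysis rests on the dichotomy for subgroups generated by two \emph{root} elements, but a unipotent element of order $p$ in characteristic $p$ need not be a root element: for $p\ge 5$ it may have Jordan blocks of any size up to $p$, and for $p=3$ over $\mathbb{F}_3$ the non-root unipotent classes (e.g.\ $J_3$ or $J_2^2$ elements) are precisely the ones you must show are \emph{not} exceptions, and the root-element dichotomy says nothing about them. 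The paper's analogues handle general unipotent elements by Borel--Tits/parabolic reductions (Lemma \ref{lem:2.2} and \cite[Lemma 2.1]{GS}), pushing $x$ into a Levi complement where induction applies. Third, for semisimple $x$ the assertion that one can ``choose $g$ so that $\langle x,x^g\rangle$ is irreducible or contains a subsystem subgroup'' is the desired conclusion rather than an argument; the proofs here substitute explicit mechanisms (decomposition of the natural module into $\langle x\rangle$-invariant subspaces, Sylow-$p$-subgroup containment in subsystem subgroups as in Tables \ref{tab:SylE7}--\ref{tab:SylE8}, and counting involutions against the maximal overgroups of $x$). Until these three items are supplied, the completeness of list (2) --- which you rightly call the delicate heart of the matter --- has not been established.
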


In this paper, we prove a result that is quite similar to Theorem \ref{thmA*}. 

\begin{thm} \label{inv} Suppose that the finite group $G$ satisfies one of the following conditions:
\begin{enumerate}
\item[(1)] $G$ is almost simple group;
\item[(2)] $SL(d,q) \le G \le GL(d,q)$ or $SU(d,q) \le G \le GU(d,q)$, and if $d=2$ and $q$ is odd, then $SL(2,q)$ or $SU(2,q)$ has even index in $G$;
\item[(3)] $G$ is a finite group of Lie type (in the sense of \cite{Steinberg})
 and $G \not \cong SL(2,q)$ ($q$ odd).
\end{enumerate} 
% By quasisimple we mean $G/Z(G)$ is a finite simple group of L and $G$ is perfect $[G,G]=G$.
  If $x \in G$ has prime order $p \ge 5$ in  $G/Z(G)$, then there exists an involution $y \in G$ such that
  $\langle x,y\rangle$ is not solvable.
\end{thm}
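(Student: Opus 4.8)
The plan is to deduce the theorem from Theorem \ref{thmA*} by first reducing all three cases to the almost simple case, and then, in the almost simple case, upgrading the two conjugates of $x$ produced by Theorem \ref{thmA*} to a pair consisting of $x$ and an involution. Since $p \ge 5$, case (2) of Theorem \ref{thmA*} never occurs, so for every almost simple $G$ with $x$ of order $p$ there is already some $g$ with $\langle x, x^g \rangle$ nonsolvable; the whole difficulty is that one of the generators must be forced to have order $2$.

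I would begin with the reduction. In cases (2) and (3) the group $G$ has a nontrivial centre in general, and $x$ has order $p$ only modulo $Z(G)$. Writing $\bar{G} = G/Z(G)$, note that $\bar{G}$ is almost simple (up to the exceptional isomorphisms in the small cases), and that $\langle x, y\rangle$ is nonsolvable as soon as its image $\langle \bar{x}, \bar{y}\rangle$ is nonsolvable, since a quotient of a solvable group is solvable. Thus it suffices to find an involution $\bar{y} \in \bar{G}$ with $\langle \bar{x}, \bar{y}\rangle$ nonsolvable and then to lift $\bar{y}$ to an honest involution $y \in G$. The lifting is the subtle point: the coset $\bar{y}\,Z(G)$ must contain an element of order $2$. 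This is exactly why $\SL(2,q)$ with $q$ odd is excluded in (3) and why an even-index hypothesis is imposed in (2) when $d=2$ and $q$ is odd: in $\SL(2,q)$ the only involution is the central element $-I$, so no noncentral involution exists to lift to, whereas the even-index condition forces $-1 \in \det(G)$ and hence places the whole coset of determinant $-1$ inside $G$, providing a noncentral involution (for instance a diagonal element of determinant $-1$). I would check that, outside these excluded configurations, every relevant involution of $\bar{G}$ lifts, reducing everything to the almost simple case (1).

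For the almost simple case I would argue family by family using the classification, reusing the nonsolvable-generation constructions of \cite{Guest1} behind Theorem \ref{thmA*} but arranging for an involution. The common mechanism is to exhibit a nonsolvable subgroup $L \le G$ --- typically $L \cong \SL(2,r)$, $\PSL(2,r)$, $A_5$, or some $A_m$ with $m \ge 5$ --- that contains a conjugate of $x$ together with a noncentral involution $y$ for which $\langle x, y\rangle$ is nonsolvable. For alternating groups one takes $x$ a product of $p$-cycles and chooses an involution whose support overlaps that of $x$, so that $\langle x, y\rangle$ contains an alternating group of degree at least $5$. For classical groups I would split on the size of the support of $x$ on the natural module: when the support is large one finds an $\SL(2,\cdot)$-type subgroup on which $x$ acts nontrivially together with one of its involutions, and when the support is small one treats the bounded-dimensional configurations individually. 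Exceptional and sporadic groups are handled through their subgroup structure, using the lists of maximal subgroups, with a finite check for the sporadics.

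The reformulation I would keep in mind throughout is that $\langle x, y\rangle$ is solvable for every involution $y$ if and only if every involution of $G$ lies in some maximal solvable subgroup containing $x$; the goal in each family is therefore to produce a single involution escaping this union. The hardest part will be the low-dimensional classical groups and the base case $\PSL(2,q)$, where there is very little room: one must ensure that $\langle x, y\rangle$ is neither contained in a Borel subgroup nor dihedral, and one must track carefully how involutions behave under the central quotient. These small cases, together with the involution-lifting bookkeeping forced by the centre in (2) and (3), are where the real work lies, whereas the generic high-rank cases follow fairly uniformly from the construction of a suitable $\SL(2,r)$ or alternating section.
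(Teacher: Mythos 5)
There is a genuine gap, and it sits exactly where your proposal inverts the logic of the paper. You propose to reduce cases (2) and (3) to the almost simple case by passing to $\bar G = G/Z(G)$, finding an involution $\bar y$ there, and lifting it to an involution $y \in G$, asserting that ``outside these excluded configurations, every relevant involution of $\bar G$ lifts.'' That assertion is false. Whenever $Z(G)$ has even order there are involutions of $\bar G$ whose entire preimage coset contains no involution: for $G = \Sp(2n,q)$ with $q$ odd (a group covered by case (3)), any $y$ with $y^2 = -I$ gives an involution of $\PSp(2n,q)$ whose coset $\{y,-y\}$ consists of elements of order $4$; similarly in $\SL(d,q)$ one gets involutions of $\PSL(d,q)$ lifting only to elements of order $8$ when $8 \mid q-1$ and $4 \mid d$, and the simply connected $E_7(q)$ has the same problem. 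So the reduction ``find $\bar y$ in the almost simple quotient, then lift'' does not go through; one would have to prove that a \emph{specific} liftable involution works, which is essentially the whole theorem again. The paper's proof runs the minimal-counterexample induction on the union of classes (1)--(3) simultaneously and constructs involutions inside $G$ itself; the central quotient is only invoked where the centre has odd order (Lemma \ref{lem:nosc}) or where it is separately verified that the relevant involutions lift (e.g.\ $\SU(3,q)$ and the irreducible case with $d$ odd). Indeed, cases (2) and (3) appear in the statement precisely because the inductive reductions --- to Levi complements of parabolics, to stabilizers of decompositions $V = W \oplus W^\perp$, to subsystem subgroups containing a Sylow $p$-subgroup --- land in groups such as $\GL(2,q)$, $\GU(2,q)$, $\Sp(4,q)$, $\SL(7,q)$ that are not almost simple; they are inputs to the induction, not corollaries of the almost simple case.

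Beyond this structural problem, the almost simple portion of your plan is only a sketch: it correctly anticipates the mechanisms the paper uses (reduction into nonsolvable subgroups containing $x$ and an involution, counting arguments for $\PSL(2,q)$, the Suzuki groups and field automorphisms, maximal subgroup analysis for tori in exceptional groups, Thompson's criterion for sporadics), but it does not supply the case analysis, and the hard low-rank and centre-tracking work you correctly identify as ``where the real work lies'' is left undone. As written, the proposal cannot be accepted as a proof.
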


In particular, Theorem \ref{inv} shows that if  $p\ge 5$ and $G$ is almost simple, then  there exists \emph{an involution} $y$ such that $\langle x,x^y\rangle$ is not solvable. For $\langle x,x^y\rangle$ has index $1$ or $2$ in $\langle x,y\rangle$ and so either both groups are solvable, or both of them are not solvable. Also, Theorem \ref{thmA*} shows that when the order of $x$ has a prime divisor $p \ge 5$ and $G$ is almost simple, there exist two conjugates that generate a nonsolvable group. In this paper we prove an analogous result for elements of order divisible by $3$.

%\begin{remark}
%Since $p \ge 5$ is prime, Theorem \ref{inv} also holds for groups $G$ such as $GSp(d,q)$ and $GO^{\epsilon}(d,q)$.
%\end{remark}

\begin{thm} \label{thm:9} \label{6}
Suppose that $G$ is an almost simple group and that $x$ has order $6$ or $9$. Then there exists an element $g \in G$ such that $\langle x,x^g\rangle$  is not solvable.
\end{thm}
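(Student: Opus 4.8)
The plan is to reduce Theorem~\ref{thm:9} to a statement about elements of prime order by exploiting the relationship between the powers of $x$ and the earlier results already available in the excerpt. Since $x$ has order $6$ or $9$, it has an element power of prime order: if $x$ has order $9$, then $y = x^3$ has order $3$, and if $x$ has order $6$, then $x$ has both an involution power $x^3$ and an element $x^2$ of order $3$. The key observation is that $\langle x, x^g\rangle$ contains $\langle x^3, (x^3)^g\rangle$ and $\langle x^2, (x^2)^g\rangle$ (for the order-$6$ case) as subgroups, so it suffices to find a single conjugating element $g$ that simultaneously makes one of these power-subgroups nonsolvable, \emph{and} to rule out the exceptional configurations of Theorem~\ref{thmA*}. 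The main work will therefore be to handle precisely the cases where the relevant power of $x$ falls into the exceptional list of Theorem~\ref{thmA*}(2), namely where the order-$3$ power is a long root element over $\mathbb{F}_3$, a short root element in $G_2(3)$, or a pseudoreflection in a group with socle $PSU(d,2)$.

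First I would dispose of the order-$9$ case as far as possible. Here $x^3$ has order $3$, so by Theorem~\ref{thmA*} either there is a conjugate $g$ with $\langle x^3,(x^3)^g\rangle$ nonsolvable --- and since $x^3 \in \langle x\rangle$ and $(x^3)^g \in \langle x^g\rangle$, this forces $\langle x,x^g\rangle$ nonsolvable --- or we are in one of the exceptional cases for $x^3$. But an element $x$ of order $9$ has $x^3$ a long root element only in very restricted circumstances, because long root elements in groups over $\mathbb{F}_3$ are unipotent of order exactly $3$ with prescribed Jordan structure; I would check directly which simple groups of Lie type over $\mathbb{F}_3$ actually contain an element of order $9$ cubing to such a long (or short, in $G_2(3)$) root element, and in each such group exhibit an explicit pair of conjugates of $x$ generating a nonsolvable subgroup, typically by embedding the computation in a suitable rank-one or rank-two subsystem subgroup. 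The $PSU(d,2)$ pseudoreflection exception cannot arise for $x^3$ when $x$ has order $9$ since $9 \nmid |PGU(d,2)|$-torsion of that flavour, so only the characteristic-$3$ cases need case analysis.

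For the order-$6$ case I would argue similarly but with two powers available, which gives extra leverage. Now $x^2$ has order $3$ and $x^3$ is an involution, and $\langle x, x^g\rangle \supseteq \langle x^2, (x^2)^g\rangle$. Applying Theorem~\ref{thmA*} to $x^2$, we are done unless $x^2$ is an exceptional order-$3$ element. The pseudoreflection case $G_0 \cong PSU(d,2)$ can genuinely occur here, so this is where the real obstacle lies: I would need a separate argument, presumably realizing $x$ inside a small-rank unitary subgroup and using the structure of $SU(2,2) \cong$ a known small group or an $SU(3,2)$-type configuration to produce two conjugates generating a nonsolvable subgroup, or else showing that the involution $x^3$ can be used instead via Theorem~\ref{inv} applied to an associated prime-order element. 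Likewise the characteristic-$3$ long/short root exceptions for $x^2$ must be resolved by explicit construction; since $x$ has order $6$, the group has even order and lives over $\mathbb{F}_3$, further constraining the possibilities. I expect the main obstacle to be exactly these few exceptional families --- the $PSU(d,2)$ pseudoreflection case for order $6$ in particular --- where Theorem~\ref{thmA*} gives no conjugate pair from $x^2$ alone and one must either bring in the involution $x^3$ or perform a direct, explicit generation argument inside a concretely identified small subgroup; the generic case is otherwise an immediate corollary of Theorem~\ref{thmA*}.
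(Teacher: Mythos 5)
Your overall strategy is the same as the paper's: pass to the order-$3$ power ($x^3$ for order $9$, $x^2$ for order $6$), invoke Theorem \ref{thmA*} via the containment $\langle x^k,(x^k)^g\rangle \le \langle x,x^g\rangle$, and then fight through the exceptional configurations of Theorem \ref{thmA*}(2) by hand. However, there is one concrete false step. You assert that the $\PSU(d,2)$ pseudoreflection exception cannot arise for $x^3$ when $x$ has order $9$, ``since $9 \nmid |PGU(d,2)|$-torsion of that flavour,'' and conclude that only the characteristic-$3$ cases need analysis. This is wrong: $\GU(3,2)$ contains a cyclic torus of order $2^3+1=9$ (an element with eigenvalues $\phi,\phi^{-2},\phi^{4}$ for $\phi$ a primitive $9$th root of unity in $\mathbb{F}_{64}$, a single orbit under $\lambda\mapsto\lambda^{-2}$), and its cube is the scalar $\omega I_3$ with $\omega$ a primitive cube root of unity. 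Hence $\mathrm{diag}(1,t)\in \GU(1,2)\times \GU(3,2)\le \GU(4,2)$ with $t$ of order $9$ has order $9$ modulo scalars and cubes to $\mathrm{diag}(1,\omega,\omega,\omega)$, a pseudoreflection in $\PGU(4,2)$. So this exceptional family genuinely occurs for every $d\equiv 1 \pmod 3$, and your argument leaves it untreated. The paper handles it by exactly the eigenvalue analysis sketched above: the primitive $9$th roots of unity among the eigenvalues of a lift of $x$ must occur in Frobenius-orbits of size $3$, forcing $d\equiv 1\pmod 3$ and placing $x$ in a subgroup of type $\GU(4,2)\times \GU(3,2)\times\cdots\times\GU(3,2)$, which reduces everything to $\PSU(4,2)$.

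Two smaller points. First, your fallback for the order-$6$ pseudoreflection case of ``showing that the involution $x^3$ can be used instead via Theorem \ref{inv} applied to an associated prime-order element'' cannot work as stated: Theorem \ref{inv} requires prime order $p\ge 5$, and an order-$6$ element has no such power; the paper instead lifts $x$ to $\GU(d,2)$, uses $C_{\GU(d,2)}(x^2)\cong \GU(1,2)\times\GU(d-1,2)$ and the Jordan form of the involution $x^3$ to reduce to $d\le 5$. Second, your treatment of the characteristic-$3$ exceptions is only a sketch; the paper's actual reductions there are nontrivial (a lifting lemma to $\GL(d,3)$, $\GSp(d,3)$, $\GU(d,3)$, invariant-factor and elementary-divisor arguments to cut down to rank at most $4$, a separate argument when $x^3$ is a graph automorphism, and machine verification of the base cases), so this is where most of the remaining work would lie even after the gap above is repaired.
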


Theorem \ref{thmA*} and Theorem \ref{thm:9} yield the following corollary immediately.
\begin{cor} \label{cor:not2elements}
Let $G$ be an almost simple group with socle $G_0$ and suppose that $x$ in $G$ is not a $2$-element. Then there exists $g$ in $G$ such that $\langle x,x^g
\rangle$ is not solvable or $x$ has order 3 and $x$ is a long root element in a simple group of Lie type defined over $\mathbb{F}_{3}$, a pseudoreflection in $PGU(d,2)$ or a short root element in $G_2(3)$. Moreover, there exist three conjugates of $x$ that generate a nonsolvable group unless $G_0 \cong PSU(d,2)$ or $PSp(d,3)$.
\end{cor}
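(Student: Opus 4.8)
The plan is to reduce everything to Theorems~\ref{thmA*} and~\ref{thm:9} by replacing $x$ with a suitable power. Since $x$ is not a $2$-element, some odd prime divides $|x|$. First suppose a prime $p\ge 5$ divides $|x|$, and let $x'=x^{k}$ be a power of order $p$. For every $g\in G$ we have $\langle x',x'^{g}\rangle=\langle x^{k},(x^{g})^{k}\rangle\le\langle x,x^{g}\rangle$, and because $p\ge 5$ alternative~(2) of Theorem~\ref{thmA*} cannot hold; hence some $g$ makes $\langle x',x'^{g}\rangle$, and therefore $\langle x,x^{g}\rangle$, nonsolvable. If instead no prime $\ge 5$ divides $|x|$, then $|x|=2^{a}3^{b}$ with $b\ge1$; if $|x|\ne 3$ then $|x|$ is divisible by $6$ or by $9$, a power $x'$ of $x$ has order $6$ or $9$, and Theorem~\ref{thm:9} again yields $g$ with $\langle x',x'^{g}\rangle\le\langle x,x^{g}\rangle$ nonsolvable. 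The only remaining case is $|x|=3$, where Theorem~\ref{thmA*} gives either the desired $g$ or places $x$ among the three exceptional types listed there. This proves the first assertion.

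For the second assertion, first I would observe that it suffices to treat the exceptions: if some pair $\langle x,x^{g}\rangle$ is nonsolvable then $\langle x,x^{g},x^{h}\rangle$ is nonsolvable for any conjugate $x^{h}$, so three conjugates certainly exist. Thus we may assume $|x|=3$, every pair $\langle x,x^{g}\rangle$ is solvable, and $x$ is a long root element of a simple group of Lie type over $\mathbb{F}_{3}$, a short root element of $G_{2}(3)$, or a pseudoreflection with $G_{0}\cong\PSU(d,2)$. The last case is excluded by hypothesis, so it remains to handle long root elements with $G_{0}\not\cong\PSp(d,3)$ and short root elements of $G_{2}(3)$.

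In these cases the plan is to locate a rank-$2$ subsystem subgroup of $G_{0}$ isomorphic to $\SL(3,3)$ (or $\SU(3,3)$) in which $x$ is a root element, and then to choose three conjugates of $x$ that, viewed as transvections, are in general position — for instance the root elements $1+E_{12}$, $1+E_{23}$, $1+E_{31}$, whose commutators produce all the remaining root subgroups and hence generate $\SL(3,3)$. For a long root element this uses an $A_{2}$ subsystem inside the long roots of $G_{0}$; such a subsystem is present in every indecomposable type of rank at least $2$ except $C_{n}$, since in $C_{n}$ the long roots $\pm2e_{i}$ are pairwise orthogonal and span only copies of $A_{1}$. For a short root element of $G_{2}(3)$ one uses instead the $A_{2}$ subsystem formed by the short roots of $G_{2}$. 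In either case $\SL(3,3)$ (equivalently $\SU(3,3)$) is nonsolvable, so three conjugates of $x$ generate a nonsolvable group, as required.

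Finally, the same reasoning confirms the two exclusions and pinpoints the main obstacle. In $\PSp(d,3)$ a long root element is a symplectic transvection $t_{v}$, and three such transvections $t_{v_{1}},t_{v_{2}},t_{v_{3}}$ act on $W=\langle v_{1},v_{2},v_{3}\rangle$ of dimension at most $3$; as a symplectic form has even rank, the form on $W$ has rank at most $2$, so the group generated is an extension of a unipotent group by a subgroup of $\Sp(2,3)=\SL(2,3)$ and hence solvable. In $\PSU(d,2)$ three pseudoreflections lie in a copy of $\SU(3,2)$, which is solvable (here $\PSU(3,2)$ has order $72$), in contrast with $\PSU(d,3)$, where three long root elements generate the nonsolvable $\SU(3,3)$. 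The hard part will be carrying out the rank-$2$ reduction uniformly across the families of Lie type over $\mathbb{F}_{3}$ — exhibiting in each case an $A_{2}$ subsystem subgroup $\SL(3,3)$ containing $x$ as a root element and verifying that three conjugates generate it — while correctly isolating the symplectic type $C_{n}$, and the field $\mathbb{F}_{2}$ for unitary groups, as the genuine exceptions.
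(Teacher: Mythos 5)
Your first paragraph is precisely the argument the paper intends: the paper derives this corollary from Theorems~\ref{thmA*} and~\ref{thm:9} with no written proof at all (it is stated to follow ``immediately''), and passing to a power of $x$ of order $p\ge 5$, $6$, $9$, or $3$ together with the containment $\langle x^{k},(x^{k})^{g}\rangle\le\langle x,x^{g}\rangle$ is exactly that reduction. Where you genuinely diverge is the ``moreover'' clause: the paper supplies no argument for it, whereas you give one --- locating an $A_2$ subsystem subgroup $\SL(3,3)$ (or $\SU(3,3)$) containing three conjugates of the long root element in general position, and checking directly that three symplectic transvections over $\mathbb{F}_3$, respectively three pseudoreflections over $\mathbb{F}_2$, generate a solvable group, which correctly explains why $\PSp(d,3)$ and $\PSU(d,2)$ are the genuine exclusions. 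That extra content is sound in outline and is actually needed to justify the statement; two caveats are worth recording. First, $B_2\cong C_2$ also has no long $A_2$, so your exclusion ``except $C_n$'' must be read as covering $\POm(5,3)\cong\PSp(4,3)$ via that isomorphism --- which it does, since the corollary excludes $\PSp(d,3)$. Second, the short roots of $G_2$ do not form a \emph{closed} subsystem (a sum of two short roots can be a long root), so ``the $A_2$ subsystem formed by the short roots'' is not literally a subsystem subgroup; the argument survives precisely because the characteristic is $3$: the structure constants carrying the long-root contributions in the relevant Chevalley commutator relations are divisible by $3$ --- equivalently, the exceptional graph automorphism of $G_2(3)$ interchanges long and short root elements --- so three short root elements of $G_2(3)$ do generate a copy of $\SL(3,3)$. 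With those two points made explicit, and granting the routine case-by-case verification across the families over $\mathbb{F}_3$ that you yourself flag as the remaining work, your proof establishes the ``moreover'' clause that the paper only asserts.
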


Guralnick, Flavell, and the author prove in \cite{FGG} that for all nontrivial elements $x$ in a finite (or linear) group $G$, $x$ is contained in the solvable radical of $G$ if and only if any four conjugates of $x$ generate a solvable group. In particular, if $x$ is contained in an almost simple group $G$, then there exist four conjugates of $x$ that generate a nonsolvable group  (this result and  Theorem \ref{thmA} are obtained independently by Gordeev, Grunwald, Kunyavski, and  Plotkin in \cite{GGKP}). Thus if we allow $x$ to be a $2$-element, then a similar result to Corollary \ref{cor:not2elements} is true but with four conjugates of $x$. Corollary \ref{cor:not2elements} and Theorem \ref{thm:3invs} show that in most cases, there exist three conjugates of $x$ that generate a nonsolvable  group.

\begin{thm} \label{thm:3invs}
Let $G$ be an almost simple group with socle $G_0$ and $x$ an involution in $G$.
Then either there exist $g_1,g_2 \in G$ such that $\langle
x,x^{g_1},x^{g_2} \rangle$ is not solvable or $(x,G_0)$
belongs to Table \ref{table:exceptions}.
\end{thm}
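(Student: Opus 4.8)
The plan is to reduce immediately to a purely quantitative question. Since $G$ is almost simple, the normal closure $\langle x^G\rangle$ contains the unique minimal normal subgroup $G_0$ and is therefore nonsolvable, so the entire class $x^G$ generates a nonsolvable group; the only content of the theorem is that \emph{three} conjugates suffice. Two conjugate involutions always generate a dihedral, hence solvable, group, which is exactly why three is the correct number and why an exceptional list is unavoidable. Accordingly, for each almost simple $G$ my goal is to exhibit $g_1,g_2\in G$ for which $\langle x,x^{g_1},x^{g_2}\rangle$ is nonsolvable, the canonical targets being a subgroup containing $A_5\cong PSL(2,4)$, or more generally one containing $PSL(2,q)$ or $SL(2,q)$ with $q\ge 4$. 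I would then split the analysis according to the isomorphism type of $G_0$ using the classification of finite simple groups: alternating, sporadic, and Lie type.

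For $G_0=A_n$ (and $S_n$) an involution is a product of disjoint transpositions, and I would choose $x^{g_1},x^{g_2}$ so that the three supports are in sufficiently general position that the three involutions move a common set of five or more points as a subgroup containing $A_5$; this works for all but finitely many small $n$, which are checked directly and account for the small-degree entries of Table \ref{table:exceptions}. The sporadic groups are a finite list and would be settled from the known conjugacy-class and maximal-subgroup data (character-table or direct coset computation), again recording the finitely many genuine exceptions.

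The substance is the groups of Lie type, where I would separate inner-diagonal involutions from outer (graph, field, and graph-field) involutions. For an inner involution of a classical group in odd characteristic I would use the decomposition of the natural module $V=V_{+}\oplus V_{-}$ into $(\pm 1)$-eigenspaces and then conjugate by $g_1,g_2$ to move the $(-1)$-eigenspace into general position, so that $x,x^{g_1},x^{g_2}$ together act on a small nondegenerate subspace as a group containing $\Omega(3,q)\cong PSL(2,q)$ or $SL(2,q)$; this is nonsolvable once $q\ge 4$ and the dimension is large enough to accommodate the required configuration. In characteristic two I would run the analogous argument with unipotent involutions organised by their Jordan (module) type. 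Exceptional groups I would handle by locating $x$ inside an appropriate subsystem or fundamental $A_1$-subgroup and reducing to the rank-one and classical analysis, supplemented by the maximal-subgroup structure for the low-rank cases.

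The main obstacle will be precisely the low-rank classical groups over small fields together with the outer involutions, where the general-position argument is unavailable: here the number of eigenspace configurations is too small, or the field is $\mathbb{F}_2$ or $\mathbb{F}_3$, so that every triple of conjugates is forced into a solvable group. For these I would replace the uniform construction by explicit use of involution-centraliser structure, fusion, and the classification of subgroups generated by conjugate involutions, verifying in each remaining case either a nonsolvable triple or that none exists; this is both the source of Table \ref{table:exceptions} and the delicate part, since one must \emph{prove} nonexistence rather than merely fail to produce a triple. The outcome should mesh with Theorem \ref{thmA*} and Corollary \ref{cor:not2elements} to complete the three-conjugate picture for all non-$2$-elements and involutions alike.
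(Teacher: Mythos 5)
Your overall case division (alternating, sporadic, Lie type; inner versus outer involutions) matches the paper's, and your general-position construction would plausibly dispose of inner involutions in classical groups over fields with $q\ge 4$. But the theorem's difficulty is concentrated exactly where that construction gives nothing, and your proposal both mislocates and underestimates that region. The exceptional pairs in Table \ref{table:exceptions} are not ``low-rank classical groups over small fields'': unitary transvections in $PSU(d,2)$, symplectic and orthogonal transvections over $\mathbb{F}_2$, and reflections in $P\Omega(d,3)$ are exceptions in \emph{every} rank. These are precisely the classes whose commutator space $[V,x]$ is one-dimensional, for which three conjugates in general position act essentially on a $3$-space over $\mathbb{F}_2$ or $\mathbb{F}_3$, where $\Omega(3,q)$ is solvable. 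Consequently the whole regime $q\le 3$ --- which is where every infinite family in the table lives, and where one must also show that the many \emph{non}-exceptional classes over $\mathbb{F}_2$ and $\mathbb{F}_3$ (for instance non-transvection unipotent involutions in $Sp(2n,2)$, or non-reflection involutions in $B_n(3)$) do admit a good triple --- is deferred in your proposal to an unspecified appeal to centralizer structure and fusion. That is the gap: you give no mechanism that separates, say, a transvection in $Sp(8,2)$ (an exception) from an involution of type $J_2^2$ in $Sp(8,2)$ (not an exception).

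The paper closes this gap with a concrete descent that your outline lacks: the Borel--Tits theorem places every inner involution in a parabolic subgroup, Lemma \ref{lem:2.2} lets one assume $x$ acts noncentrally on the Levi components, and the minimal-degree decompositions of Liebeck and Saxl \cite{LS} let one choose a nondegenerate summand $W$ on which $x$ is neither trivial nor a transvection/reflection whenever $x$ itself is not one; induction on the list of tabled exceptions then reduces everything to a finite, machine-checkable collection of small groups. The base cases are anchored by Malle--Saxl--Weigel \cite{MSW} (three involutions generate $G_0$ whenever there is a unique class of involutions) and by Guralnick--Saxl \cite{GS} in low rank, neither of which your outline invokes. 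A similar descent through normalized subgroups of the same type is what the paper uses for the graph, field, and graph-field involutions, which you likewise leave unaddressed. Without some substitute for this reduction machinery, the proposal establishes the theorem only away from the region where it has content.
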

\begin{table}[htp]
\centering
\captionsetup{width=0.9\textwidth}
             \caption{\label{table:exceptions} Pairs $(x,G_0)$ such that any three conjugates
      of $x$ in $\Aut(G_0)$ generate a solvable group.}
%\begin{minipage}{\textwidth}
\begin{tabular}[t]{cc}
          \hline
$G_0$  &$x$ \\
         \hline
         $A_n$ & Transposition \\
         $A_6$  & Triple transposition \\
         $PSU(d,2)$ & Unitary transvection \\
         $PSU(4,2) \cong P\Omega(5,3)$ & Graph automorphism \\
         $PSL(4,2) \cong A_8$ & Graph automorphism \\
         $P\Omega^{\pm}(d,2)$, $d$  even & Orthogonal transvection  \\%this is a graph aut; in fact Aut(D_n(2)) =SO^{+}(2,2)
         $PSp(d,2) \cong P\Omega(d+1,2)$ & Symplectic transvection
         \\
         $P\Omega(d,3)$, $d$ odd & reflection \\
         $Fi_{22}$  & $x$ in class 2A\\  
         $Fi_{23}$ & $x$ in class 2A\\
         $Fi_{24}^{\prime}$  & $x$ in class 2C in $Fi_{24}^{\prime}:2$\\
\hline
        \end{tabular} 
   % \end{minipage}
  \end{table}
  
  We note that if $x$ is an involution, then $\langle x,x^g\rangle$ is dihedral and so we need at least three conjugate involutions to generate a nonsolvable group. 
  
In a future work, the author hopes to improve Corollary \ref{cor:not2elements} to find the minimal number of conjugates in an almost simple group required to generate a nonsolvable group for $2$-elements as well. This requires a  proof  that for an element of order $4$, there exist two conjugates that generate a nonsolvable group with a short list of exceptions, and that two conjugates always suffice for an element of order $8$.

 Also, using Lemma \ref{lem:1.1} below, we get the following corollaries to Theorems \ref{thm:3invs} and  \ref{thm:9}.
 
\begin{cor} \label{cor:to3invs}
 Let $G$ be a finite group with trivial Fitting subgroup and let $x$ be an involution in $G$. Then either there exist elements $g_1,g_2 \in G$ such that $\langle x,x^{g_1},x^{g_2} \rangle$ is not solvable or for every component $L$ of $G$, $x \in N_G(L)$ and $(x,L)$ is in Table \ref{table:exceptions}. 
\end{cor}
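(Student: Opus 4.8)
The plan is to deduce the corollary from the almost simple case, Theorem \ref{thm:3invs}, by combining the structure of a group with trivial Fitting subgroup with the reduction supplied by Lemma \ref{lem:1.1}. Since $F(G)=1$, the generalized Fitting subgroup satisfies $F^{*}(G)=E(G)=L_1\times\cdots\times L_k$, a direct product of nonabelian simple groups (any central factor of a component would lie in $F(G)$), and $C_G(F^{*}(G))=1$, so $G$ embeds in $\Aut(F^{*}(G))$ and permutes the set $\{L_1,\dots,L_k\}$. I first record the trivial but essential point that, as $x$ is an involution, $\langle x,x^{g}\rangle$ is dihedral and hence solvable; so at least three conjugates are genuinely needed, and the content is to show that three suffice outside the exceptional list, which in turn forces $x$ to normalize each component and to induce a table-exception there.

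First I would dispose of the case in which $x$ does not normalize some component. Being an involution, $x$ then interchanges a pair $L_i,L_j$. Taking the conjugating elements $g_1,g_2$ inside $L_i$, the products $xx^{g_1}$ and $xx^{g_2}$ lie in $L_i\times L_j$ and project onto $g_1,g_2$ in $L_i$ (indeed $xx^{g}={}^{x}(g^{-1})\,g$, with ${}^{x}(g^{-1})\in L_j$). Since $L_i$ is nonabelian simple, hence $2$-generated and nonsolvable, I may choose $g_1,g_2$ generating $L_i$; then $\langle x,x^{g_1},x^{g_2}\rangle$ surjects onto $L_i$ and is nonsolvable. Consequently, in the remaining case $x\in N_G(L)$ for every component $L$, which already supplies the first clause of the exceptional alternative.

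Next, for a component $L$ normalized by $x$ on which $x$ acts nontrivially, the induced image $\bar x$ in $\Aut(L)$ is an involution and $A=\langle L,\bar x\rangle$ is almost simple with socle $L$. Every $A$-conjugate of $\bar x$ has the form $\bar x^{\ell}$ with $\ell\in L$, so conjugation of $x$ by the corresponding elements $\ell\in L\le G$ realizes, on $L$, all the conjugates occurring in Theorem \ref{thm:3invs} (and acts as $x$ does on the other components). Hence three conjugates of $\bar x$ generating a nonsolvable subgroup of $A$ lift to three conjugates of $x$ whose image in $\Aut(L)$ is nonsolvable, so $\langle x,x^{\ell_1},x^{\ell_2}\rangle$ is nonsolvable. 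Therefore, unless the induced pair $(\bar x,L)$ lies in Table \ref{table:exceptions}, we are finished by Theorem \ref{thm:3invs}; identifying $\bar x$ with $x$ as in the table, this is exactly the second clause of the alternative, and Lemma \ref{lem:1.1} is what assembles these component-wise statements into a statement about $G$.

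The step I expect to be the main obstacle is precisely this packaging in Lemma \ref{lem:1.1}: one must show that the obstruction to nonsolvable generation is genuinely component-local, so that nonsolvability cannot be manufactured by combining the actions on several components beyond what each factor contributes separately, and one must treat with care the components centralized by $x$ (where $\bar x$ is trivial) together with the diagonal behaviour that arises when several isomorphic components lie in a single $G$-orbit. Matching these degenerate and orbit-theoretic configurations against the clean statement of the exceptional alternative, and verifying that the $G$-conjugacy class of $x$ meets each component exactly as an almost simple problem, is where Lemma \ref{lem:1.1} does the real work and where the argument requires the most care.
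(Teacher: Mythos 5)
Your proposal is correct and follows the paper's own (one-line) proof exactly: the paper deduces the corollary "immediately" from Theorem \ref{thm:3invs} together with Lemma \ref{lem:1.1}, and your argument simply spells out that deduction, with your first paragraph's swapping computation being precisely the content of the "in any case" clause of Lemma \ref{lem:1.1}. The worry in your final paragraph is unnecessary, since the corollary only asserts the contrapositive direction (failure of the exceptional alternative forces three generating conjugates), not that the exceptional configurations never admit three such conjugates, so no component-locality of the obstruction needs to be established.
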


\begin{cor} \label{cor:to9}
 Let $G$ be a finite group and let $x \in G$ have order $9$. If $x^3$ is not contained in the solvable radical of $G$ then there exists $g \in G$ such that $\langle x,x^g\rangle$ is not solvable.
\end{cor}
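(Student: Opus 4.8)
The plan is to pass to the quotient by the solvable radical and then reduce to an almost simple section, at which point Theorem \ref{thm:9} applies directly. Write $R = R(G)$ for the solvable radical, $\bar{G} = G/R$, and let $\bar{x}$ denote the image of $x$. First I would observe that for any $g \in G$ the group $\langle x, x^g\rangle$ is solvable if and only if $\langle \bar{x}, \bar{x}^{\bar{g}}\rangle$ is solvable: the kernel of the natural surjection $\langle x, x^g\rangle \to \langle \bar{x}, \bar{x}^{\bar{g}}\rangle$ lies in $R$ and is therefore solvable, so solvability passes in both directions. Hence it suffices to solve the problem in $\bar{G}$. Moreover, since $x^3 \notin R$ we have $\bar{x}^3 = \overline{x^3} \ne 1$, and as the order of $\bar{x}$ divides $9$ this forces $\bar{x}$ to have order exactly $9$. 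Finally $R(\bar{G}) = 1$, so $F(\bar{G}) = 1$, the generalised Fitting subgroup $F^*(\bar{G}) = E(\bar{G})$ is a direct product of nonabelian simple components $L_1, \dots, L_n$, and $C_{\bar{G}}(E(\bar{G})) = Z(E(\bar{G})) = 1$; in particular $\bar{x}$ acts faithfully by conjugation on $E(\bar{G})$ and on the set of its components.

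Next I would use this faithfulness together with $\bar{x}^3 \ne 1$ to locate a component on which to work. Since $\bar{x}^3$ acts nontrivially on $E(\bar{G})$, there is a component $L$ on which $\bar{x}^3$ acts nontrivially, meaning either $\bar{x}^3$ moves $L$ or $\bar{x}^3 \in N_{\bar{G}}(L)$ induces a nontrivial automorphism of $L$. I would then split into cases according to the length $m \in \{1,3,9\}$ of the $\langle \bar{x}\rangle$-orbit of $L$. If $\bar{x}$ moves $L$, so that $m \in \{3,9\}$, then the permutation action of $\bar{x}$ on a nontrivial orbit of components already yields a pair of conjugates generating a nonsolvable group, which is exactly the content of Lemma \ref{lem:1.1}; this also disposes of the situation where $\bar{x}^3$ itself moves $L$, since then necessarily $m = 9$.

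The remaining case is $m = 1$, so $\bar{x} \in N_{\bar{G}}(L)$ and induces an element $\hat{x}$ in the almost simple group $A = N_{\bar{G}}(L)/C_{\bar{G}}(L)$ with socle $L$. Here the order hypothesis does the essential work: because $m = 1$ while $\bar{x}^3$ acts nontrivially on $L$, the induced automorphism $\hat{x}^3 = \widehat{\bar{x}^3}$ is nontrivial, and since $\hat{x}$ has order dividing $9$ this forces $\hat{x}$ to have order exactly $9$ in $A$. Now Theorem \ref{thm:9} applies to the almost simple group $A$ and the order-$9$ element $\hat{x}$, producing $a \in A$ with $\langle \hat{x}, \hat{x}^a\rangle$ nonsolvable, and Lemma \ref{lem:1.1} then lifts this to a conjugate $g$ with $\langle \bar{x}, \bar{x}^{\bar{g}}\rangle$, and hence $\langle x, x^g\rangle$, nonsolvable.

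Granting Lemma \ref{lem:1.1}, the main obstacle is not any single computation but the bookkeeping that guarantees we always land in a case it covers; the genuinely delicate point, and the reason the hypothesis is stated in terms of $x^3$ rather than $x$, is the order argument in the $m=1$ case. It ensures that whenever $\bar{x}$ fixes a component and acts nontrivially on it, the induced element has order $9$ rather than $3$, so that one may invoke Theorem \ref{thm:9} and never has to confront the exceptional order-$3$ elements (long root elements over $\mathbb{F}_{3}$, pseudoreflections in $PSU(d,2)$, and short root elements in $G_2(3)$) that obstruct the analogous two-conjugate statement for order-$3$ elements in Theorem \ref{thmA*}. Absent Lemma \ref{lem:1.1}, the hard part would instead be the moving-component analysis establishing nonsolvability of conjugates from the permutation action alone.
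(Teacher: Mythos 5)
Your argument is correct and follows essentially the same route as the paper: reduce to trivial solvable radical, use $C_{\bar G}(F^*(\bar G))=1$ to find a component $L$ with $\bar x^3\not\in C_{\bar G}(L)$, apply Lemma \ref{lem:1.1} when $\bar x$ moves $L$, and apply Theorem \ref{thm:9} to the almost simple group with socle $L$ when $\bar x$ normalizes it. The only slips are cosmetic: the final lifting step is not Lemma \ref{lem:1.1} but just the observation that a group surjecting onto a nonsolvable group is nonsolvable, and the hypothesis $\bar x^2\not\in C_{\bar G}(L)$ of Lemma \ref{lem:1.1} should be noted explicitly (it is immediate since $\bar x^2$ generates $\langle\bar x\rangle$).
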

We note that the analogous result to Corollary \ref{cor:to9} for order $6$ elements is not true. For example, let $G= S_5 \times PSL(3,3)$ and $x=(a,b) \in G$ with $a$ a transposition in $S_5$ and $b$ a transvection in $PSL(3,3)$. Then $x$ has order $6$, the solvable radical is trivial, and $\langle x,x^g\rangle$ is solvable for all $g \in G$. We discuss this in more detail in Remark \ref{cor:to6} following the proof of Corollary \ref{cor:to9}.

\section{Preliminaries}
	Throughout the paper, we will use the notation $L^{\epsilon}(d,q)$ where $ \epsilon \in \{\pm\}$ to denote the $PSL(d,q)$ when $\epsilon = +$ and $PSU(d,q)$ when $\epsilon=-$. $D_{n}^{\epsilon}$ will refer to $D_n(q)$ and ${^2}D_n(q)$ for $\epsilon=+$ and $\epsilon=-$ respectively. Similarly $E_6^{\epsilon}(q)$ refers to $E_6(q)$ and ${^2}E_6(q)$ for $\epsilon=+$ and $\epsilon=-$.

Lemma \ref{lem:1.1} below relies on the  result of Guralnick and Kantor \cite{GurAs} that every nontrivial element $x$ in an almost simple group belongs to a pair of elements $(x,y)$ that generates a group containing the socle of $G$. Corollary \ref{cor:to3invs} follows immediately from Theorems \ref{thm:3invs} and Lemma \ref{lem:1.1}.
\begin{lem}\label{lem:1.1}
Let $G$ be a finite group with trivial Fitting subgroup. Let $L$ be a component of $G$ and suppose that $x \not\in N_G(L)$. If $x^2 \not\in C_G(L)$ then there exists $g \in G$ such that $\langle x,x^g\rangle$ is not solvable. In any case, there exist $g_1,g_2 \in G$ such that $\langle x,x^{g_1},x^{g_2} \rangle$ is not solvable.
\end{lem}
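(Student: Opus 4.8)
The plan is to analyze the situation using the structure theory of groups with trivial Fitting subgroup.

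The plan is to exploit the structure of a group with trivial Fitting subgroup. Since $F(G)=1$, the generalized Fitting subgroup is $E(G)=L_1\times\cdots\times L_k$, a direct product of nonabelian simple groups (the components), and $C_G(E(G))=1$, so $G$ acts faithfully on $E(G)$ and permutes the $L_i$ by conjugation. As $x\notin N_G(L)$, the element $x$ moves $L=:L_1$; let $\{L_1,\dots,L_m\}$ be its $\langle x\rangle$-orbit, with $m\ge2$ and $L_i^{x}=L_{i+1}$ (indices modulo $m$), and put $\Delta=L_1\cdots L_m\cong S^m$ where $S:=L_1$. Then $\Delta$ is $x$-invariant and $x$ acts as a cyclic shift of the factors twisted by the automorphism $\phi:=x^m|_{L_1}\in\Aut(L_1)$; in particular $x^2|_{L_1}=\phi$ when $m=2$.

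The engine of the proof is the following observation. For $g\in\Delta$ write $u:=x^{-1}x^{g}=(g^{x})^{-1}g\in\Delta$, so that $\langle x,x^{g}\rangle=\langle x,u\rangle$ contains the $\langle x\rangle$-invariant subgroup $N:=\langle u^{x^{j}}:j\in\mathbb{Z}\rangle\le\Delta$. Since a subgroup of a direct product is solvable if and only if all its coordinate projections are, it suffices to make one projection $\pi_1(N)\le L_1$ nonsolvable. Writing $g=(a_1,\dots,a_m)$ one computes $u=(u_1,\dots,u_m)$ with $u_i=a_{i-1}^{-1}a_i$ (and $a_0:=a_m^{\phi}$), and, transporting coordinates back to $L_1$ by powers of $x$, obtains that $\pi_1(N)$ is precisely the smallest $\langle\phi\rangle$-invariant subgroup of $L_1$ containing $u_1,\dots,u_m$. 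Thus everything reduces to choosing $g$ so that these successive differences, together with their $\phi$-translates, generate a nonsolvable subgroup of $L_1$.

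First I would dispose of the case $m\ge3$ (which is exactly where $x^2$ moves $L$, so $x^2\notin C_G(L)$ holds automatically): picking $a_1=1$, $a_2=s_1$, $a_3=s_1s_2$, and $a_4=\cdots=a_m=a_3$, where $s_1,s_2$ generate $L_1$ (every finite simple group is $2$-generated), gives $u_2=s_1$, $u_3=s_2$, whence $\pi_1(N)\supseteq\langle s_1,s_2\rangle=L_1$ and $\langle x,x^{g}\rangle$ is nonsolvable; one conjugate suffices. The remaining possibility under the hypothesis $x^2\notin C_G(L)$ is $m=2$ with $\phi=x^2|_{L_1}\ne1$. Here, taking $g=(1,c)$ collapses $\pi_1(N)$ to the $\langle\phi\rangle$-invariant subgroup generated by the single element $c$, so I need a $c\in L_1$ whose $\langle\phi\rangle$-orbit generates a nonsolvable subgroup. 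This is the crux of the argument. To settle it I would apply the theorem of Guralnick and Kantor to the nontrivial element $\phi$ of the almost simple group $\langle L_1,\phi\rangle\le\Aut(L_1)$: there is $s\in L_1$ with $\langle\phi,s\rangle\supseteq L_1$. Replacing $s$ by a representative in $L_1$ and setting $K:=\langle s^{\phi^{l}}:l\in\mathbb{Z}\rangle$, the group $K$ is $\langle\phi\rangle$-invariant, $\langle\phi,s\rangle=K\langle\phi\rangle$, and $K\langle\phi\rangle/K$ is cyclic; since $K\langle\phi\rangle\supseteq L_1$ is nonsolvable, $K$ must be nonsolvable. Taking $c=s$ then finishes this case with a single conjugate, completing the first assertion.

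Finally, for the unconditional three-conjugate statement, the only case left is $x^2\in C_G(L)$, which together with $x\notin N_G(L)$ forces $m=2$ and $\phi=1$, i.e.\ $x$ simply interchanges $L_1$ and $L_2$ and centralizes them. Now a single conjugate only injects one cyclic projection, but two conjugates do the job: with $g_1=(1,t)$ and $g_2=(1,r)$ one gets $\pi_1\big(\langle u^{x^j},(u')^{x^j}:j\in\mathbb{Z}\rangle\big)\supseteq\langle t,r\rangle$, which I choose to be $\langle t,r\rangle=L_1$, so $\langle x,x^{g_1},x^{g_2}\rangle$ is nonsolvable. I expect the main obstacle to be the case $m=2$, $\phi\ne1$: the reduction above is formal, but producing an element with nonsolvable $\langle\phi\rangle$-orbit for an arbitrary nontrivial automorphism $\phi$ genuinely needs the Guralnick--Kantor generation theorem, and one must check the argument uniformly whether $\phi$ is inner or outer (when $\phi$ is inner, $\langle L_1,\phi\rangle=L_1$ and $\phi$ corresponds to a nontrivial element of $L_1$, and the same normal-closure argument shows $K=L_1$).
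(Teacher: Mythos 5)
Your proof is correct: the reduction to the $x$-orbit $\{L_1,\dots,L_m\}$, the computation of $x^{-1}x^{g}$ and its coordinate projections, and the use of the Aschbacher--Guralnick/Guralnick--Kantor generation theorem to handle the crux case $m=2$, $\phi\neq 1$ all check out, as does the two-conjugate trick when $\phi=1$. The paper itself gives no proof here beyond citing \cite[Lemma 1]{Guest1}, but it states explicitly that that proof rests on the same generation result of \cite{GurAs}, so your argument is essentially the intended one.
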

\begin{proof} 
See \cite[Lemma 1]{Guest1}.
\end{proof}

\begin{lemma} \label{lem:2.2}
  Let $G_0$ be a simple group of Lie type, let $G 
= Inndiag(G_0)$ and let $x \in G$. 
\begin{enumerate}
\item[(a)] If $x$ is unipotent, let $P_1$ and $P_2$ be distinct maximal parabolic subgroups containing 
a common Borel subgroup of $G$, with unipotent radicals $U_1$ and $U_2$. Then $x$  is 
conjugate to an element of $P_i \backslash U_i$  for $i = 1$ or $i = 2$. 
\item[(b)] If $x$  is semisimple, assume that $x$  lies in a parabolic subgroup of $G$. If the rank of $G_0$  
is at least 2, then there exists a maximal parabolic $P$  with a Levi complement $J$  such 
that $x$  is conjugate to an element of $J$  not centralized by any Levi component (possibly 
solvable) of $J$.
\end{enumerate}
\end{lemma}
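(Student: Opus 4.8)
The plan is to treat the two parts separately, in each case reducing to a combinatorial statement about the (relative) root system of $G$.

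For part (a) the strategy is to first conjugate $x$ into the unipotent radical $U$ of the common Borel $B$, which is possible since $U$ is a Sylow $p$-subgroup of $G$ and every unipotent element is conjugate into it. Writing the two maximal parabolics as $P_1 = P_{\Delta \setminus \{\alpha_1\}}$ and $P_2 = P_{\Delta \setminus \{\alpha_2\}}$ for distinct simple roots $\alpha_1, \alpha_2$, the radical $U_i$ is the product of the root subgroups $U_\beta$ for which the coefficient $c_{\alpha_i}(\beta)$ of $\alpha_i$ in $\beta$ is positive. Hence an element of $U$ lies outside $U_i$ precisely when some root in its support has zero coefficient at $\alpha_i$. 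The key reduction is to show that $x$ is conjugate to an element of $U$ whose support contains a simple root $\alpha_k$; granting this, $\alpha_k$ has coefficient $0$ at every node except $k$, and since $k$ cannot equal both $1$ and $2$, this conjugate lies outside $U_1$ or outside $U_2$. As it always lies in $U \subseteq P_1 \cap P_2$, it lies in $P_i \setminus U_i$ for the corresponding $i$, as required.

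Thus part (a) rests on the claim that every nontrivial unipotent element has a conjugate in $U$ whose support meets the simple roots, equivalently a conjugate lying outside $[U,U]$. I would prove this by a lowering argument: starting from $x \in U \cap [U,U]$, choose a root $\beta$ of minimal height in the support, write $\beta = \gamma + \alpha$ with $\alpha$ simple and $\gamma$ a positive root, and use the Chevalley commutator relations to conjugate $x$ so as to introduce a term of strictly smaller height in $U_\gamma$; iterating drives the minimal height down to $1$. I expect this lowering step to be the main obstacle, since keeping the conjugate inside $U$ and controlling the higher commutator terms requires care, and in small characteristic the description of $[U,U]$ must be handled through the structure constants; here I would fall back on the standard description of the first level $U/[U,U]$.

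For part (b) I may assume $x$ is noncentral (otherwise the conclusion is vacuous). Conjugate $x$ into a maximal torus $T$; the hypothesis that $x$ lies in a parabolic guarantees that $T$ may be taken inside an $F$-stable Levi of a proper parabolic. Set $S = \{\alpha \in \Delta : \alpha(x) \neq 1\}$, the simple roots moved by $x$; then $S \neq \emptyset$, since if every simple root were trivial on $x$ then so would every root, forcing $x$ central. A maximal parabolic $P_i = P_{\Delta \setminus \{\alpha_i\}}$ has Levi $J$ whose factors correspond to the connected components of $\Delta \setminus \{\alpha_i\}$, and a factor centralizes $x$ exactly when none of its simple roots lies in $S$; so it suffices to find a node $\alpha_i$ such that every component of $\Delta \setminus \{\alpha_i\}$ meets $S$. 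Since $x \in T \subseteq J$ this needs no further conjugation, and each factor then contains a root group $U_\beta$ with $\beta(x) \neq 1$ and so does not centralize $x$. The existence of such a node is a statement about the Dynkin diagram, which is a tree: if no such node existed, then for each vertex $v$ one could choose a component $C_v$ of the diagram minus $v$ with $C_v \cap S = \emptyset$ and set $f(v)$ to be the neighbour of $v$ in $C_v$, giving a map $f$ on the finite vertex set whose only possible cycles (in a tree) have length two, $f(u)=v$ and $f(v)=u$; but then $C_u$ and $C_v$ are the two sides of the edge $\{u,v\}$ and cover all vertices, contradicting $S \neq \emptyset$. The genuine obstacle in part (b) is not this combinatorics but the passage to non-split tori over small fields: the split torus can be trivial (for instance over $\mathbb{F}_2$), so one must run the argument with the relative root system attached to an $F$-stable maximal torus inside a proper parabolic, replacing $\Delta$ by the $F$-orbits of nodes and checking that the tree argument survives the folding. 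The rank $\geq 2$ hypothesis enters precisely because for rank $1$ the Levi of a Borel is a torus, with no semisimple factor available to move $x$.
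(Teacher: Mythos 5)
First, a remark on the comparison you were asked to survive: the paper does not prove this lemma at all --- its ``proof'' is the single line ``See \cite[Lemma 2.2]{GS}'' --- so you are reconstructing the argument of Guralnick--Saxl rather than competing with anything in this paper. Your skeleton is essentially the standard one, and two pieces of it are correct and complete as written: the reduction in (a) (a conjugate of $x$ lying in $U$ whose support contains a simple root $\alpha_k$ automatically lies in $P_i\setminus U_i$ for either $i\ne k$, since $U=(U\cap L_i)\ltimes U_i$ and the $\alpha_k$-component survives in $U\cap L_i$), and the tree combinatorics in (b) (the two-cycle of the map $v\mapsto f(v)$ forces $C_u\cup C_v$ to cover the diagram and hence $S=\emptyset$).

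There are, however, genuine gaps. In (a) the entire content is the claim that a nontrivial unipotent element is conjugate into $U$ with a simple root in its support, and the lowering step you propose --- conjugating by $x_{-\alpha}(t)$ and reading off a $U_{\beta-\alpha}$ term from the Chevalley commutator formula --- is exactly where it can break: the relevant structure constants are $\pm(p+1)$ and vanish in characteristic $2$ or $3$ for some root pairs, and even when they do not vanish the contributions from several roots of the support landing on the same target root can cancel over $\mathbb{F}_2$; you flag this but do not resolve it. The robust fix is to conjugate by the Weyl representative $n_\alpha$ for a simple $\alpha$ with $\langle\beta,\alpha^\vee\rangle>0$, where $\beta$ has minimal height $\ge 2$ in the support: $n_\alpha$ permutes the root subgroups (no structure constants, no cancellation), preserves $U$ because $s_\alpha$ permutes $\Phi^+\setminus\{\alpha\}$ and $\alpha$ is not in the support, and strictly lowers the minimal height. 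Even with that repair, the twisted groups (non-reduced relative root systems, root ``subgroups'' that are not one-parameter, ${}^2F_4$) are nowhere addressed, and the lemma is stated for all groups of Lie type. In (b), evaluating simple roots at $x$ presupposes that $x$ lies in a maximally split torus; a semisimple element lying in a parabolic generically lies only in a non-split maximal torus of a Levi, and your proposed remedy (folding $\Delta$ into $F$-orbits) addresses the twisting of $G$ itself, not the non-splitness of the torus through $x$. One must either run the argument in the ambient algebraic group relative to a maximal torus containing $x$ and descend, or start from a Levi subgroup minimal subject to containing $x$ (minimality forces no component of that Levi to centralize $x$) and then control what happens when it is enlarged to the Levi of a maximal parabolic. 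As it stands, the generic case of (b) and the twisted case of (a) are missing.
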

\begin{proof} 
 See \cite[Lemma 2.2]{GS} 
\end{proof}

\section{Proof of Theorem 1.3}

Let $(x,G)$ be a minimal counterexample. If $G$ is almost simple, then let $G_0$ be the simple group $G_0$ satisfying
$G_0 \unlhd G \le {\rm Aut}(G_0)$.
\begin{lemma}  \label{lem:Aninvs}
 If $G$ is almost simple and  $G_0 \cong A_n$, then $(x,G)$ is not a minimal counterexample. 
\end{lemma}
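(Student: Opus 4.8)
The plan is to work entirely inside the socle and to reduce the whole problem to a single cycle. Since $G$ is almost simple we have $Z(G)=1$, so $x$ genuinely has order $p\ge 5$ in $G$; and since $|G:G_0|$ divides $|\mathrm{Out}(A_n)|\in\{2,4\}$, which is coprime to $p$, the element $x$ must already lie in $G_0\cong A_n$. Realising $x$ as a permutation in $\mathrm{Sym}(\{1,\dots,n\})$, it is a product of disjoint $p$-cycles, and I would fix one of them, say $c$, with support $\Delta$ of size $p$.

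The key observation is that one cycle already suffices. I would choose the involution $y$ to be an even permutation supported inside $\Delta$, for instance $y=(a_1,a_2)(a_3,a_4)$ with the $a_i\in\Delta$ (possible since $p\ge 5>4$); then $y\in A_n=G_0\le G$, so $y$ is a legitimate choice even when $G=A_n$ or $G$ is one of the exceptional almost simple extensions of $A_6$. Now $\Delta$ is invariant under $\langle x,y\rangle$, and restriction to $\Delta$ gives a homomorphism $\rho\colon\langle x,y\rangle\to\mathrm{Sym}(\Delta)$ with $\rho(x)=c$ and $\rho(y)=y$, hence with image $\langle c,y\rangle$. Since a nonsolvable quotient forces the whole group to be nonsolvable, it is enough to prove that $\langle c,y\rangle$ is nonsolvable, which then contradicts the assumption that $(x,G)$ is a minimal counterexample.

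To prove $\langle c,y\rangle$ nonsolvable I would invoke the classical theorem (essentially due to Galois) that a solvable transitive permutation group of prime degree $p$ is contained in $\mathrm{AGL}(1,p)=N_{\mathrm{Sym}(\Delta)}(\langle c\rangle)$. As $\langle c,y\rangle$ is transitive of degree $p$, it can be solvable only if $y$ normalises $\langle c\rangle$; being an involution, $y$ would then have to invert $c$ (it cannot centralise $c$, since $C_{\mathrm{Sym}(\Delta)}(c)=\langle c\rangle$ contains no involution), i.e. $y$ would be a reflection fixing exactly one point of $\Delta$. I would rule this out by a fixed-point count: for $p\ge 7$ the chosen $y$ fixes $p-4\ge 3$ points and so is not a reflection, whence $\langle c,y\rangle\not\le\mathrm{AGL}(1,p)$ is nonsolvable.

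The one delicate case is $p=5$ with a single cycle, where an even involution and a reflection have the same cycle type (both are double transpositions fixing a single point), so the fixed-point count no longer separates them. Here I would simply verify directly that $\langle(1,2,3,4,5),(1,2)(3,4)\rangle=A_5$ — equivalently, that $(1,2)(3,4)$ is not the reflection inverting the $5$-cycle — which settles the only remaining ambiguity. Everything else, namely that $x\in G_0$ and that $y$ may be taken even and hence lies in $G$, is routine bookkeeping, so I expect this small $p=5$ check to be the only genuine obstacle in the argument.
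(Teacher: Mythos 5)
Your proposal is correct and follows essentially the same route as the paper: reduce to a single $p$-cycle by restricting to the support of one cycle, and pair it with the double transposition $(12)(34)$, whose product with the cycle generates $A_p$. The paper simply asserts $\langle (12\cdots p),(12)(34)\rangle=A_p$, whereas you justify it via Galois's theorem on solvable transitive groups of prime degree plus a fixed-point count (and a direct check for $p=5$); this is a legitimate filling-in of the same argument rather than a different approach.
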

\begin{proof} 
  Since $x$ has odd order $p$, it must lie in $A_n$. It suffices to assume that
  $x=(12 \cdots p) \in A_p$. If we let
  $y=(12)(34)$, then $\langle x,y \rangle = A_p$, which is not solvable.
 \end{proof}
 
\begin{lemma} \label{lem:liftinvs}
(a) If $x \in G \le PGL^{\epsilon}(d,q)$ does not lift to an element of order $p$ in $\GL(d,q)$, then $(x,G)$ is not a minimal counterexample.\\
(b)  If $Z(G) \ne \{1\}$, then we may assume that $x \in G$ has order $p$. 
\end{lemma}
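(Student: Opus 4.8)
The plan is to prove Lemma \ref{lem:liftinvs} by reducing the problem to the case where $x$ genuinely has order $p$ in the relevant matrix group, exploiting that $x$ has order $p$ in $G/Z(G)$ together with the minimality of the counterexample. I would treat the two parts separately, since part (a) concerns the projective setting $G \le \PGL^{\epsilon}(d,q)$ and part (b) is a general statement about central quotients, but both rest on the same underlying idea: a preimage $\hat{x}$ of $x$ in the covering group can be adjusted by a central element to control its order.

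For part (a), suppose $x \in G \le \PGL^{\epsilon}(d,q)$ has order $p$ in $G/Z(G)$ but does not lift to an element of order $p$ in $\GL(d,q)$. The first step is to fix a preimage $\hat{x} \in \GL(d,q)$ and analyze its order. Since the image of $\hat{x}$ in $\PGL^{\epsilon}(d,q)$ has order $p$, we have $\hat{x}^p \in Z(\GL(d,q))$, so $\hat{x}^p = \lambda I$ for some scalar $\lambda$. If no scalar multiple $\mu\hat{x}$ has order $p$, then $p$ must divide the order of the relevant scalar group in a way that obstructs the adjustment; concretely, $p \mid q - \epsilon 1$ and the $p$-part of the scalar group interferes. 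In this situation I would argue that $x$ is forced to be a semisimple element of a specific type (so that Lemma \ref{lem:2.2}(b) applies once we embed in $\Inndiag(G_0)$), and then construct the required involution $y$ directly, or else invoke the minimality of $(x,G)$ against a smaller-dimensional or smaller-index configuration. The key observation is that elements not lifting to order $p$ are rare and tightly constrained, so they can be handled by an explicit reduction rather than the general machinery.

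For part (b), the aim is to show that when $Z(G) \ne \{1\}$ we lose no generality in assuming $x$ itself has order exactly $p$ rather than merely order $p$ modulo the center. Here I would take the preimage perspective within $G$: if $x$ has order $p$ in $G/Z(G)$, then $x^p \in Z(G)$. The plan is to replace $x$ by a suitable power or central adjustment $x' = xz$ (with $z \in Z(G)$) so that $x'$ has order exactly $p$, and then observe that $\langle x, y\rangle$ is solvable if and only if $\langle x', y\rangle$ is, because $Z(G)$ is central and solvable and $\langle x, y\rangle Z(G) = \langle x', y\rangle Z(G)$. Concretely, if $p$ does not divide $|Z(G)|$ then the $p$-part of $x$ already has order $p$ modulo nothing and we take $x' = x^{|x^p|}$-type adjustment; if $p \mid |Z(G)|$ then one removes the central $p$-part. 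Either way the solvability of the generated subgroup is unaffected, so the minimal counterexample may be taken with $x$ of order $p$.

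The main obstacle will be part (a), specifically pinning down exactly which semisimple classes fail to lift and verifying that each such class is accounted for by either a direct construction of a nonsolvable $\langle x, y\rangle$ or by the minimality hypothesis. The arithmetic of when a scalar adjustment exists depends delicately on $\gcd(p, q - \epsilon 1)$ and on the structure of $Z(\SL^{\epsilon}(d,q))$ versus $Z(\GL^{\epsilon}(d,q))$, and one must be careful that the element $x$, viewed in $\Inndiag(G_0)$, still satisfies the parabolic hypotheses needed for Lemma \ref{lem:2.2}(b). I expect the statement to follow once the non-lifting condition is translated into a congruence on $p$ and the eigenvalue structure of $\hat{x}$, after which the reduction to a previously handled or smaller case is routine.
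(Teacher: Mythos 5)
Your part (b) is essentially the paper's argument in the case $p \nmid |Z(G)|$: replace $x$ by $x^{|Z(G)|}$ (or the equivalent power), note that the new pair is still a minimal counterexample because $\langle x', y\rangle \le \langle x, y\rangle$, and proceed. But the case $p \mid |Z(G)|$ is not handled by ``removing the central $p$-part'': if $x^p$ generates the $p$-part of a cyclic centre and is not a $p$-th power there, then \emph{no} central adjustment $xz$ and no power of $x$ coprime to $p$ has order $p$ --- this is exactly the ``does not lift'' situation, and the paper resolves it by falling back on part (a), not by a further adjustment. So the weight of the lemma rests entirely on part (a), and that is where your proposal has a genuine gap.

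For part (a) you correctly observe that failure to lift forces $p \mid q-\epsilon$ and that $x$ is semisimple, but the two steps you then propose do not close the argument. First, the key structural fact is missing: when $\hat{x}^p=\lambda I$ with $\lambda$ not a $p$-th power in the scalar group, every irreducible $\langle x\rangle$-summand of the natural module has dimension exactly $p$, so $p \mid d$ and, by minimality, one reduces to $d=p$ with $x$ acting \emph{irreducibly}. Second, in that reduced case your proposed tool, Lemma \ref{lem:2.2}(b), is inapplicable: an element acting irreducibly on $V$ stabilizes no proper nonzero subspace and hence lies in no parabolic subgroup, so the hypothesis of Lemma \ref{lem:2.2}(b) fails precisely for the elements you need to treat. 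The paper instead uses that, since $p \mid q-\epsilon$ and $d=p$, a Sylow $p$-subgroup of $\GL^{\epsilon}(p,q)$ lies in a maximal subgroup of type $(q-\epsilon)\wr S_p$; irreducibility forces $x$ to have nontrivial image (a $p$-cycle) in the $S_p$ quotient, and one then takes an involution $y \in \SL^{\epsilon}(p,q)$ inducing a chosen involution of $S_p$, so that $\langle x,y\rangle$ surjects onto a nonsolvable subgroup of $S_p$ (as in Lemma \ref{lem:Aninvs}). Without the reduction to the irreducible $d=p$ case and the wreath-product construction of $y$, the proof of (a) --- and hence of the hard branch of (b) --- is not there.
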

\begin{proof} 
To prove (a), note that if $x$ does not lift to an element of order $p$ in $GL^{\epsilon}(n,q)$, then
$p \mid (q- \epsilon,n)$ and the natural $\langle x\rangle$-module $V$ decomposes into $p$-dimensional spaces (see \cite[Lemma 3.11]{Bur2} for example). It therefore suffices to assume that $n=p$ and $x$ acts irreducibly on the natural module $V$ since $(x,G)$ is a minimal counterexample.  Under these conditions on $n$, $p$ and $q$, a Sylow $p$-subgroup of $GL^{\epsilon}(n,q)$ is contained in a type $(q- \epsilon)\wr S_p$ maximal subgroup.  The irreducibility of $x$ implies that $x$ is non-trivial in
$S_p$, and we can take an involution $y \in SL^{\epsilon}(p,q)$ that induces any involution in $S_p$; thus $(x,G)$ cannot be a minimal
counterexample.  \\% see scanned work in Paper3 folder lifts.pdf
To prove (b), if  $SL^{\epsilon}(d,q) \le G \le GL^{\epsilon}(d,q)$, then consider  $x \in G/Z(G) \le PGL(d,q)$. If $x$ does not lift to an element of order $p$ in $G$, then the same argument as for part (a) shows that there exists an involution $y \in SL(d,q)$ such that $\langle x,y\rangle$ is not solvable.  In all other cases, $p$ does not divide $|Z(G)|$ so $x'= x^{|Z(G)|}$ will have order $p$ in $G$ and $(x',G)$ will also be a minimal counterexample to Theorem \ref{inv}.
\end{proof}

\begin{lemma} \label{lem:psl2}  
  If $\PSL(2,q) \le G \le \Aut( \PSL(2,q) )$ or $SL^{\epsilon}(2,q) \lneqq G \le GL^{\epsilon}(2,q)$ with $[G:SL^{\epsilon}(2,q)]$ even, then $(x,G)$ cannot be a minimal counterexample.
\end{lemma}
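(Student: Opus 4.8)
The plan is to pass to $\bar G = G/Z(G)$, so that $\PSL(2,q) \le \bar G \le \Aut(\PSL(2,q)) = \mathrm{P\Gamma L}(2,q)$ and $\bar x$ has order $p \ge 5$; by Lemma~\ref{lem:liftinvs} I may assume $x$ itself has order exactly $p$. The only real subtlety in the reduction is the supply of honest involutions in $G$. When $q$ is odd the unique involution of $\SL(2,q)$ (equivalently $\SU(2,q)$) is the central $-I$, and an involution of $\PSL(2,q)$ lifts only to an element of order $4$ in $\SL(2,q)$; this is exactly why the statement excludes $\SL(2,q)$ for $q$ odd and imposes the even-index hypothesis in the second alternative. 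I would first note that $[G:\SL^{\epsilon}(2,q)]$ even forces $-1 \in \det(G)$, whence $G$ contains the entire coset of determinant $-1$; since conjugation preserves the determinant, $G$ contains the whole $\PGL(2,q)$-class of the non-central involution $\diag(1,-1)$. Thus in the second alternative a genuine involution is available, and it suffices to work inside $\PGL^{\epsilon}(2,q) = \PGL(2,q)$ (recall $\SU(2,q) \cong \SL(2,q)$ and $\PSU(2,q) \cong \PSL(2,q)$, with $q+1$ replacing $q-1$). For $q$ even, $[\GL:\SL] = q-1$ is odd, so the second alternative only gives $\SL(2,q) = \PSL(2,q)$, which already contains unipotent involutions.

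Next I would split on the type of $\bar x$: either (i) $\bar x \in \PGL(2,q)$, so $\bar x$ is unipotent of order $p = p_0 = \mathrm{char}(\mathbb{F}_q)$, or semisimple in a cyclic maximal torus with $p \mid q-1$ (split) or $p \mid q+1$ (nonsplit); or (ii) $\bar x$ involves a field automorphism of order $p$, forcing $p \mid f$ (where $q = p_0^f$) and $C_{\PSL(2,q)}(\bar x) \ge \PSL(2, p_0^{f/p})$. The main tool throughout is Dickson's classification of the subgroups of $\PSL(2,q)$: a solvable subgroup containing an element of order $p \ge 5$ is either contained in the normalizer of a maximal torus (a dihedral group) or, only when that element is unipotent, in a Borel subgroup. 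The subgroups $A_4$ and $S_4$ are irrelevant, as they contain no element of order $p \ge 5$, while $A_5$ (which does contain elements of order $5$) is nonsolvable.

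For the construction I treat the cases separately. If $\bar x$ is unipotent it fixes a unique point of the projective line, hence lies in a unique Borel $B$; moreover it lies in no dihedral subgroup, since the rotation subgroups of the dihedral subgroups of $\PSL(2,q)$ are tori and so have order prime to $p_0$. Therefore any involution $y \notin B$ --- and these are plentiful --- gives $\langle \bar x, y\rangle \not\le B$ and non-dihedral, hence nonsolvable. If $\bar x$ is semisimple, the solvable subgroups containing it are dihedral (so a bad $y$ must invert $\bar x$ and lie in $N_{\bar G}(C)\setminus C$, where $C = C_{\bar G}(\bar x)$) or, only in the split case $p \mid q-1$, lie in one of the two Borel subgroups through the two fixed points of $\bar x$; in the nonsplit case there are no Borel overgroups at all. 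Either way the number of \emph{bad} involutions is $O(q)$, whereas the relevant class of involutions (taken in $\PSL(2,q)$ in the first alternative, or the $\PGL(2,q)$-class produced above in the second) has size $\asymp q^2/2$, since involution centralizers are dihedral of order $q \mp 1$. Hence a good involution $y$ exists for all but finitely many $q$, and $\langle \bar x, y\rangle$ is then nonsolvable. Case~(ii) is handled similarly, the one difference being that one must avoid $C_{\PSL(2,q)}(\bar x)$ (an involution centralizing $\bar x$ only produces an abelian group); choosing $y$ so that the $\langle\bar x\rangle$-conjugates of $y$ generate a nonsolvable subgroup of $\PSL(2,q)$ suffices, using that $C_{\PSL(2,q)}(\bar x) \ge \PSL(2, p_0^{f/p})$ is nonsolvable once $p_0^{f/p} \ge 4$.

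The main obstacle is the semisimple case for small $q$, where the counting is tight; there one also meets the exceptional isomorphisms $\PSL(2,4) \cong \PSL(2,5) \cong A_5$ and $\PSL(2,9) \cong A_6$, as well as subfield subgroups $\PGL(2,q_0)$ that may contain $\bar x$. The finitely many pairs $(p,q)$ with $q$ below the counting bound I would dispose of by direct inspection --- for instance by exhibiting an explicit involution through a trace computation in $\SL(2,q)$ --- verifying in each that some involution $y$ yields $\langle \bar x, y\rangle$ isomorphic to $A_5$, $S_5$, $\PGL(2,q_0)$, or the whole group, all of which are nonsolvable. A final bookkeeping point is to check that every involution used actually lies in $\bar G$: in the first alternative this holds because the involutions are taken inside $\PSL(2,q) \le \bar G$, and in the second because, as noted, $G$ contains the full $\PGL(2,q)$-class of $\diag(1,-1)$.
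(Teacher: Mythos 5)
Your overall strategy is the paper's: reduce modulo the centre, secure a supply of genuine involutions in the $\GL^{\epsilon}$ case via the determinant-$(-1)$ class of $\diag(1,-1)$, split according to whether $x$ is unipotent, split semisimple, nonsplit semisimple, or a field automorphism, and then use Dickson's subgroup list to show that the involutions lying in a solvable overgroup of $x$ are too few. Your treatment of the three inner cases is sound and, in the unipotent case, arguably cleaner than the paper's explicit root-element computation (the paper conjugates $x_{\alpha_1}(a)$ by $n_{\alpha_1}$ to land in $\PSL(2,p)$ directly, whereas you argue via uniqueness of the Borel containing $x$; both work).

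The genuine gap is the field-automorphism case, which you dismiss with ``handled similarly'' but which is where most of the paper's proof actually lives. When $x$ is inner, the solvable overgroups of $x$ number at most three (one torus normalizer, at most two Borels), which is why your ``$O(q)$ bad involutions'' count succeeds. When $x$ is a field automorphism this fails: $x$ normalizes \emph{many} $G_0$-conjugates of each relevant subgroup (Borels, dihedral groups, subfield subgroups), and to bound the bad involutions one needs the fact -- used explicitly in the paper, citing the proof of \cite[Lemma 3.1]{GS} -- that the $x$-invariant $G_0$-conjugates of each such $H$ form a single $C_{G_0}(x)$-orbit, so that there are exactly $|C_{G_0}(x)|/|C_H(x)|$ of them; one then verifies that $\sum_H i_2(H)\,|C_{G_0}(x)|/|C_H(x)| + |C_{G_0}(x)|$ is less than $i_2(G_0)$. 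Without this orbit count the estimate does not close. Moreover, your one concrete suggestion for this case -- ``using that $C_{\PSL(2,q)}(\bar x)\ge \PSL(2,p_0^{f/p})$ is nonsolvable'' -- is a non sequitur: an involution $y$ centralizing $x$ makes $\langle x,y\rangle$ abelian, so the nonsolvability of the centralizer produces nothing; only its \emph{order} enters, and only through the orbit count above. You should replace the ``similarly'' by the actual estimate (or at least state the orbit lemma and the resulting inequality), noting that it is here, not in the inner cases, that the hypothesis $p\ge 5$ does real work in keeping the bound comfortable.
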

  \begin{proof} 
  First note that if $\PSL(2,q) \le G \le \Aut(\PSL(2,q) )$, then the order of $x$ implies that $x$ is
  either in $PSL(2,q)$, or it is a field automorphism.  In this case, we may assume that $q \ge 7$ since we have eliminated the case that $A_n \le G \le \Aut(A_n)$. First, let us assume that $x \in PSL(2,q)$. 
  \par
    If $p \mid q$, then $x \in PSL(2,q)$ is a transvection, and we may assume that
  $x=x_{\alpha_1}(a)$, with $a \in \mathbb{F}_p$. In this case,
  $x^{n_{\alpha_1}}=x_{-\alpha_1}(\pm a)$, thus $\langle
    x,x^{n_{\alpha_1}}\rangle = PSL(2,p)$, which is not solvable.
    \par
    %Dont want to merge this with the SL unipotent case since in this case we might only have PSL(2,q), i.e. the matrix given in the SL has determinant -1.

  If $x$ is semisimple in $PSL(2,q)$, then either $p \mid q+1$, or $p \mid q-1$. Suppose
  first that $p \mid q+1$. Then consider the possibilities for the maximal subgroups
  of $PSL(2,q)$ containing $x$. Since $(x,G)$ is a minimal
  counterexample, $x$ cannot be contained in $A_5$, and it cannot
  be contained in $A_4$ or $S_4$ since $p \ge 5$. Moreover, $x$
  cannot be contained in a subfield subgroup since, because of the
  order of $x$, any such subfield subgroup would be almost simple.  So $x$ can only be
  contained in a dihedral group $D$ of order $\frac{2(q+1)}{(2,q-1)}$. It can be contained in only one
dihedral  subgroup since $C_G(x)$ is the cyclic subgroup of $D$ of order $ \frac{q+1}{(2,q-1)}$. So, let $y$ be an involution in $G$ that is not
  contained in $D$.  \par

  Now suppose that $p \mid q-1$. The possible maximal subgroups
  containing $x$ are a dihedral group $D$ of order $\frac{2(q-1)}{(2,q-1)}$ and (at most two)
  Borel subgroups. Let $i_2(H)$ denote the number of involutions in a
  group $H$. Then
\begin{equation*}
  i_2(G) \ge   \begin{cases}
    q^2-1  &\text{for $q$ even;} \\
    q(q-1)/2 & \text{for $q$ odd.}
       \end{cases}
\end{equation*}
Moreover, if $B$ is a Borel subgroup, then
\begin{equation*}
  i_2(B) \le   \begin{cases}
    q-1 &\text{for $q$ even;} \\
    q  & \text{for $q$ odd.}
  \end{cases}
\end{equation*}
If $D$ is the dihedral group above, then
\begin{equation*}
  i_2(D)  \le \begin{cases}
    (q+1)/2 & \text{for $q$ odd;} \\ %the inequality is necessary here since there is a dichotomy between cases when $q$ is 1 and -1 mod 4.
    q-1 & \text{for $q$ even.}
  \end{cases}
\end{equation*}
So if $q$ is odd, then we may assume that $q \ge 7$; thus
\begin{displaymath}
  i_2(G) \ge (q^2-q)/2 > 2q+(q+1)/2 \ge 2i_2(B)+i_2(D).
\end{displaymath}
Also, if $q$ is even, then we may assume that $q \ge 8$ and so
\begin{displaymath}
  i_2(G) \ge q^2-1 > 2(q-1) + (q-1) \ge 2i_2(B) +i_2(D).
\end{displaymath}
Thus $x \not\in PSL(2,q)$. 

Now suppose that $x$ is a field
automorphism of $PSL(2,q)$. We may assume that $x$ is a standard
field automorphism by \cite[7.2]{GL}. Define $q_0$ by
$q:=q_0^p$ and let
\begin{displaymath}
  \Gamma = \{y \in G_0 \mid y^2=1, \: \langle  x,y\rangle \ne  G
  \}.
\end{displaymath}
We will show that $|\Gamma| < i_2(G_0)$. Indeed, if $y \in \Gamma$
then $\langle x,y\rangle \cap G_0$ is contained in a
subgroup of $G_0=PSL(2,q_0^p)$. From the description of the subgroups of $PSL(2,q)$, since $p$ is odd, $\langle x,y \rangle \cap G_0$ must be contained in a Borel subgroup, a
dihedral group of order $\frac{2(q \pm 1)}{(2,q-1)}$, or a subfield subgroup of type $PSL(2,q_0)$. 
We note
that since $(x,G)$ is a minimal counterexample, $\langle x,y \rangle \cap G_0$ cannot be contained in any
other \emph{maximal} subfield subgroups. Now, if $H$ is a torus of order $\frac{(q \pm 1)}{(2,q-1)}$, a Borel or subfield subgroup,
then the $G$-conjugates of $H$ fixed by $x$ form one $C_{G_0}(x)$
orbit (see the proof of \cite[Lemma 3.1]{GS} for example). If $H$ is a $G$-conjugate of a maximal dihedral group that is fixed by $x$, then $x$ must also normalize the characteristic cyclic subgroup of $H$ (a torus of order $\frac{(q \pm 1)}{(2,q-1)}$). Since the $G$-conjugates of the torus that are fixed by $x$ are all $C_{G_0}(x)$-conjugate, it follows that the $G$-conjugates of the dihedral group that are fixed by $x$ are also $C_{G_0}(x)$-conjugate. 
% Lemma 3.1 relies on H coming from an algebraic group i.e. a borel, a torus, or a PGL etc.  When $H$ is dihedral, $x$ must also fix the torus (the characteristic cyclic subgroup of the dihedral group) if it is to normalize $H$ so all tori of type (q-1) say are $C_G(x)$ conjugate. But the dihedral groups are just the normalizers of these tori i.e if $T_1^c=T_2$, then $N(T_1)^c = N(T_1^c)=N(T_2)$.
So the number of conjugates of $H$ that can contain $\langle x,y \rangle \cap G_0$ is at most  
$|C_{G_0}(x)|/|C_H(x)|$. Thus the number of involutions $y$ in
$G_0$ such that $\langle x,y\rangle \cap G_0$ is contained in a
conjugate of $H$ is at most
\begin{displaymath}
  \frac{i_2(H)|C_{G_0}(x)|}{|C_H(x)|}.
\end{displaymath}
Let $X_1, \ldots, X_k=C_{G_0}(x)$ be representatives for the
conjugacy classes of maximal subgroups containing $\langle
x,y\rangle \cap G_0$. Note that there are no nontrivial conjugates of $X_k=C_{G_0}(x)$ fixed by $x$ and so a crude upper bound for the number of involutions in $G$ such that $ \langle x,y \rangle \cap G_0$ is contained in $C_{G_0}(x)$ is $|C_{G_0}(x)|$.  So if $(x,G)$ is a minimal counterexample, then we have
\begin{displaymath}
  |\Gamma| \le \sum_{i=1}^{k-1}
  \frac{i_2(X_i)|C_{G_0}(x)|}{|C_{X_i}(x)|}+|C_{G_0}(x)|.
\end{displaymath}
 % the last term is a crude upper bound for the number of involutions in $C_{G_0}(x)$
If $q$ is odd, then
 \begin{align*}
    \sum_{i=1}^k \frac{i_2(X_i)|C_{G_0}(x)|}{|C_{X_i}(x)|} \le&
    \frac{q_0^pq_0(q_0^2-1)}{q_0(q_0-1)} + \frac{(q_0^p+1)q_0(q_0^2-1)}{2(q_0-1)} \\
    & +\frac{(q_0^p+3)q_0(q_0^2-1)}{2(q_0+1)} + \frac{q_0(q_0^2-1)}{2} \\
    \le &\frac{q_0(q_0+1)(3q_0^p+q_0+3)}{2};
  \end{align*}
but this is less than $i_2(G_0) \ge q_0^p(q_0^p-1)/2$. If $q$ is
even, then
 \begin{align*} 
   \sum_{i=1}^k \frac{i_2(X_i)|C_{G_0}(x)|}{|C_{X_i}(x)|} \le&
   \frac{(q_0^p-1)q_0(q_0^2-1)}{q_0(q_0-1)} +
   \frac{(q_0^p-1)q_0(q_0^2-1)}{2(q_0-1)} \\
   & +\frac{(q_0^p+1)q_0(q_0^2-1)}{2(q_0+1)} + q_0(q_0^2-1) \\
   \le & 2(q_0^p+q_0)(q_0+1)q_0;
\end{align*}
but $i_2(G_0) \ge (q_0^{2p}-1)$  and so $|\Gamma| \le i_2(G_0)$. 
\par
If $SL^{\epsilon}(2,q) \lneqq G \le GL^{\epsilon}(2,q)$ and $x \in G$, then we may assume that $x$ has order $p$ by Lemma \ref{lem:liftinvs}. Moreover, we may assume that $q$ is odd since $PSL(2,2^a) \cong SL(2,2^a)$ and so our hypothesis states that $[G:SL(2,q)]$ is even. If $x$ is semisimple, then since $SL^{\epsilon}(2,4)\cong PSL(2,4)$, $GL^{\epsilon}(2,4)$ is not a minimal counterexample and $GL^{\epsilon}(2,5)$ does not contain semisimple elements of order $p \ge 5$; thus we may assume that $q \ge 7$. If $(x,G)$ is a minimal counterexample then $x$ must be contained in $SL^{\epsilon}(2,q)$, for otherwise $p | q- \epsilon$ and there exists a scalar $\lambda$ such that $\lambda x \in SL^{\epsilon}(2,q)$. % Since p is odd, the determinant of x must be an even power $\mu^{2k}$ of the generator of $q - \epsilon$. So multiply by $\mu^{-k}$.
Thus $SL^{\epsilon}(2,q)$ has index $2$ in $G$, and there are at least $q^2+q$ involutions in $G$. Now the same counting argument as for $PSL(2,q)$ shows that $(x,G)$ cannot be a minimal counterexample.  % x can be in SL(2,5); but we count maximal subgroups of $G=\langle SL^{\epsilon}(2,q) , t\rangle$ and these are of the form M.2 where M is maximal in $SL^{\epsilon}(2,q)$, for otherwise, they are contained in $SL^{\epsilon}(2,q)$ and are not maximal. Any $SL(2,5).2$ is isomorphic to a subgroup of $GL(2,5)$ - and this has been verified in MAGMA. 

If $x$ is unipotent in $SL^{\epsilon}(2,q)$, then $q \ge 5$, and by minimality, $SL^{\epsilon}(2,q)$ has index $2$ in $G$. We may assume that $x$ is not contained in any subfield subgroups by minimality. So choose an involution $y$ such that $[x,x^y] \ne 1$. % i.e choose y not in N(U)=borel. 
 Another inspection of the maximal subgroups shows that $\langle x,y\rangle$ is not solvable. % and not SL(2,5) since any two noncommuting elements of order 5 in here generate a nonsolvable group, probably must be the whole thing.
\end{proof}

\begin{lem} \label{lem:outinvs}
  If $G$ is almost simple and $x$ is an outer automorphism of $G_0$, then $(x,G)$ cannot be a
  minimal counterexample, except possibly if $G_0 \cong {^2}B_2(2^a)$.
\end{lem}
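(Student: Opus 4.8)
The plan is to show that, under the minimality hypothesis, an outer $x$ of order $p\ge 5$ must induce a field automorphism of $G_0$, and then to reduce to the rank-one case already settled in Lemma~\ref{lem:psl2}. First I would record that $G_0$ is of Lie type: the alternating case is Lemma~\ref{lem:Aninvs}, and a sporadic socle has $|\mathrm{Out}(G_0)|\le 2$, so admits no outer automorphism of order $p\ge 5$. Since a graph automorphism (including triality on $D_4$) has order at most $3<p$, and diagonal automorphisms lie in $\Inndiag(G_0)$ and are handled with the inner case, the field component of $x$ must be nontrivial of order $p$; that is, $x$ induces a field automorphism of order $p$ on $G_0$. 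Exactly as in the field-automorphism part of Lemma~\ref{lem:psl2}, I may then invoke \cite[7.2]{GL} to assume that $x$ is a standard field automorphism, and I write $q=q_0^p$, so that $C_{G_0}(x)=G_0(q_0)$ is the subfield subgroup of the same type.

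The main step is to produce an $x$-invariant subgroup $L\le G_0$ with $\overline L:=L/Z(L)\cong PSL(2,q)$ on which $x$ acts as a field automorphism of order $p$. For every family of Lie type other than the Suzuki groups, a (long) root $SL_2$ or a rank-one subsystem subgroup supplies such an $L$: the standard field automorphism stabilises each root subgroup and restricts there to a field automorphism of $PSL(2,q)$. Passing to $\overline L$ and the induced automorphism $\bar x$, the group $\langle \bar x,\overline L\rangle\cong PSL(2,q).p \le \Aut(PSL(2,q))$ is not a minimal counterexample by Lemma~\ref{lem:psl2}, so there is an involution $\bar y\in\overline L$ with $\langle\bar x,\bar y\rangle$ nonsolvable. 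Lifting $\bar y$ to an involution $y\in L\le G_0$ (immediate when $q$ is even; for $q$ odd I would choose $L$ embedding as $PSL(2,q)$, or replace it by a conjugate in which the preimage of $\bar y$ is a genuine involution of $G_0$), the group $\langle x,y\rangle$ surjects onto $\langle\bar x,\bar y\rangle$ and is therefore nonsolvable, contradicting minimality.

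The hard part, and the source of the exception, is the existence of a suitable $L$. The Suzuki groups ${}^2B_2(2^a)$ contain no section isomorphic to any $PSL(2,r)$, and every proper nonsolvable subgroup of ${}^2B_2(2^a)$ is a subfield subgroup ${}^2B_2(2^b)$; thus the rank-one reduction is unavailable, and even an inductive passage to $C_{G_0}(x)={}^2B_2(q_0)$ lands back inside the Suzuki family. This is precisely why the conclusion must allow $G_0\cong {}^2B_2(2^a)$. The remaining twisted rank-one families cause no trouble: the Ree groups ${}^2G_2(q)$ and ${}^2F_4(q)$ do contain $x$-invariant $PSL(2,q)$ subgroups (for instance inside an involution centraliser). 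As a uniform fallback for any family where exhibiting $L$ is awkward, one can instead run the counting argument of Lemma~\ref{lem:psl2} verbatim: letting $\Gamma$ be the set of involutions $y\in G_0$ with $\langle x,y\rangle$ solvable, bound $|\Gamma|$ by $\sum_i i_2(X_i)\,|C_{G_0}(x)|/|C_{X_i}(x)|$ over representatives of the classes of $x$-invariant maximal subgroups, and compare with $i_2(G_0)$; since $|C_{G_0}(x)|\approx|G_0(q_0)|\approx q^{\dim G_0/p}$, every term is negligible against $i_2(G_0)$ once $q$ is not too small, with the small subfield cases (where $C_{G_0}(x)$ is solvable) checked by hand.
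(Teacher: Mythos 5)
Your overall strategy -- pass to an $x$-invariant subgroup $H$ on which $x$ acts as a nontrivial automorphism and invoke minimality -- is the paper's strategy, and your reduction of the outer automorphism to a standard field automorphism of order $p$ is correct. For $q$ even your argument matches the paper exactly: $x$ normalizes but does not centralize a root $SL(2,q)=PSL(2,q)$ subgroup, and Lemma \ref{lem:psl2} finishes. The treatment of ${}^2G_2(3^a)$ via an involution centraliser and the identification of ${}^2B_2(2^a)$ as the genuine exception are also in line with the paper.

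The gap is in the case $q$ odd, which you dispose of in a parenthesis. A long root subgroup of a group of Lie type in odd characteristic is $SL(2,q)$, not $PSL(2,q)$; its unique involution is the central one $-I$, which lies in the subfield subgroup $C_{G_0}(x)$ and so generates an abelian group with $x$, while every noncentral involution of $L/Z(L)\cong PSL(2,q)$ lifts to an element of order $4$ in $L\le G_0$. Thus there is no involution $y$ of $G_0$ inside $L$ with $\langle x,y\rangle$ nonsolvable, and your suggested fixes do not work: conjugating cannot change the order of the preimage of $\bar y$, and "choosing $L$ embedding as $PSL(2,q)$" is precisely the nontrivial existence statement that needs proof. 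This is not a technicality -- it is why Theorem \ref{inv} itself excludes $SL(2,q)$ for $q$ odd in case (3), and why Lemma \ref{lem:psl2} only covers $SL^{\epsilon}(2,q)\lneqq G\le GL^{\epsilon}(2,q)$ of \emph{even} index, a configuration your $\langle x\rangle\ltimes L$ does not realise. The paper closes this gap differently: for $q$ odd it inspects the extended Dynkin diagram to produce an $x$-invariant subgroup of type $SL(3,q)$ (where rank $\ge 2$ makes the induction available), and handles the families where no such subgroup exists ($L^{\epsilon}(3,q)$, $PSU(d,q)$, $PSp(4,q)$, ${}^3D_4(q)$, ${}^2G_2(3^a)$, ${}^2F_4(2^a)$) by bespoke $x$-invariant subgroups such as $SO(3,q)$, $PSO^{\epsilon}(d,q).(d,2)$, $PSp(2,q^2).2$, $G_2(q)$, $PSL(2,3^{2a})$ and $PGU(3,2^a){:}2$. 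Your fallback counting argument could in principle substitute for this, but it requires a classification of the $x$-invariant maximal subgroups in each family and explicit estimates, none of which is carried out, so as written the odd-characteristic case is not proved.
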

 \begin{proof}
   We may assume that the untwisted Lie rank is at least 2 since the case where
   $G_0 \cong PSL(2,q)$ has already been eliminated.  Since $x$ has order $p$, it is a field automorphism, and by \cite[7.2]{GL} we may assume that $x$ is
   a standard field automorphism. 
   Now if $G_0$ is not a Suzuki--Ree group, then $x$ normalizes but does not centralize an $SL(2,q)$ subgroup $S$. So if $q$ is even and $G_0$ is not a Suzuki--Ree group, then there exists $y$ an involution $y \in S$ such that $\langle x,y\rangle$ is not solvable. Thus we may assume that either $G_0$ is a Suzuki--Ree group or that $q$ is odd. %$SL(2,q) = PSL(2,q)$ when $q$ is even.
   
   If $q$ is odd,  then an inspection of the (extended) Dynkin diagram shows that $x$ normalizes but doesn't centralize a type $SL(3,q)$ subgroup $H$, unless $G_0 \cong PSL(2,q)$, $PSL(3,q)$, $PSp(4,q)$, ${^3}D_4(q)$, ${^2}G_2(3^a)$, or $PSU(d,q)$. 
   If $G_0 = L^{\epsilon}(3,q)$ and $q$ is odd, 
   then $x$ normalizes a subgroup of type $SO(3,q)$. % \cong PGL(2,q)$ \
   If $G_0 \cong PSU(d,q)$ and $d \ge 4$, then $x$ normalizes but does not centralize a subgroup H of $G_0$ that is isomorphic to $PSO^{\epsilon}(d,q).(d,2)$ (when $d=4$, take $\epsilon=-$) by \cite[4.5.5]{KL}. \\
    If $G_0 \cong {^3}D_4(q)$ then $x$ normalizes but does not centralize a subgroup $H$ isomorphic to $G_2(q)$. If $G_0 \cong PSp(4,q)$, then $x$ normalizes a subgroup $H$ isomorphic to $PSp(2,q^2).2$ (see \cite[Propostion 4.3.10]{KL}). 
 If $G_0\cong {^2}G_2(3^{a})$, then let $z$ be an involution in $C_{G_0}(x)$. Then  $x \in C_G(z)$, which is a subgroup $H$ of type $PSL(2,3^{2a})$ by \cite[Table 4.5.1]{GLS}. %yes adjoint type // check also 2a or a.
  Moreover, $x$ does not centralize a subgroup of type $PSL(2,3^{2a})$ since it doesn't centralize an element of order divisible by $3^{2a}+1$.  %This is ok since it's PSL. not SL. 
    If $G_0 \cong {^2}F_4(2^a)$, then a field automorphism normalizes, but does not centralize,  a subgroup of $G_0 \cong {^2}F_4(2^a)$ isomorphic to $PGU(3,2^a):2$ by \cite{2F4}.  %See the corollary. 
    By minimality,  it follows in all cases that there exists an involution $y \in H$ such that $\langle x,y\rangle$ is not solvable. 
   \end{proof}
%The cases where $x$ is a field automorphism of some Suzuki--Ree
%group will be considered in the sections on Suzuki--Ree groups.

 \begin{lem} \label{unipotents} 
 If $x$ is a unipotent element in $G$,  then $(x,G)$ cannot be a minimal counterexample.
\end{lem}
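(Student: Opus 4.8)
The plan is to use the parabolic reduction of Lemma~\ref{lem:2.2}(a) together with induction through the minimality of $(x,G)$. By Lemmas~\ref{lem:Aninvs}, \ref{lem:psl2} and \ref{lem:outinvs}, and since $x$ is unipotent of order $p$ (Lemma~\ref{lem:liftinvs}(b)), I may assume that $G_0$ is a simple group of Lie type in the defining characteristic $p\ge 5$, of untwisted Lie rank at least $2$, and that $x\in G_0$ is a nontrivial unipotent element of order $p$. The involution sought will always be produced inside a proper subgroup of Lie type, to which the inductive hypothesis applies through conditions (2) and (3) of Theorem~\ref{inv}.

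First I would apply Lemma~\ref{lem:2.2}(a): choosing the two maximal parabolics $P_1,P_2$ sharing a common Borel subgroup, $x$ is conjugate to an element of $P_i\setminus U_i$ for some $i$, so after conjugating we may assume $x\in P\setminus U$, where $P=UL$ is a maximal parabolic with unipotent radical $U$ and Levi complement $L$. Writing $\pi\colon P\to P/U\cong L$ for the natural projection, the image $\bar x=\pi(x)$ is a nontrivial unipotent element of $L$ of order $p$. The derived group $L'=[L,L]$ is a central product of quasisimple groups of Lie type over $\mathbb{F}_q$ (or its extensions, for twisted types) of strictly smaller rank, and $\bar x$ projects nontrivially to at least one simple factor $L_i$. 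Provided $L_i$ is not an excluded $\SL(2,q)$ with $q$ odd, the pair $(\bar x,L_i)$ satisfies the hypotheses of Theorem~\ref{inv} and is smaller than $(x,G)$; by minimality there is an involution $\bar y\in L_i\le L$ with $\langle \bar x,\bar y\rangle$ nonsolvable. Taking $y=\bar y\in L\le G$, the image $\pi(\langle x,y\rangle)=\langle \bar x,\bar y\rangle$ surjects onto a nonsolvable group, so $\langle x,y\rangle$ is nonsolvable, contradicting the choice of $(x,G)$.

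The main obstacle is therefore the residual case in which every simple factor of $L'$ to which $\bar x$ projects nontrivially is isomorphic to $\SL(2,q)$ with $q$ odd, where the inductive step is unavailable; allowing for the two choices of parabolic in Lemma~\ref{lem:2.2}(a), this forces $x$ to be, in effect, a root element. Here I would instead exhibit $x$ inside a subsystem subgroup $H$ of $G_0$ of rank $2$ and of type $A_2$, $C_2$ or $G_2$ — that is, $H\cong \SL^{\epsilon}(3,q)$, $\Sp(4,q)$ or $G_2(q)$ — in which $x$ is a nontrivial unipotent element; such an $H$ exists because the root system of $G_0$ is irreducible of rank at least $2$, so a fixed root lies in a connected rank-$2$ subsystem. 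None of these groups is the excluded $\SL(2,q)$ (with $q$ odd), so when $H$ is proper in $G_0$ the inductive hypothesis again produces the required involution $y\in H$. When $G_0$ itself is one of the base cases $L^{\epsilon}(3,q)$, $\PSp(4,q)$ or $G_2(q)$ (and the almost simple groups over them), I would finish by a direct construction, choosing $y$ to be an involution in the normalizer of a suitable torus or in a reflection subgroup and checking that $\langle x,y\rangle$ is nonsolvable.

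The hypothesis $p\ge 5$ is essential in these base-case computations: it rules out precisely the characteristic $2$ and $3$ configurations that appear as genuine exceptions in Theorem~\ref{thmA*} (long and short root elements over $\mathbb{F}_2$ and $\mathbb{F}_3$, and pseudoreflections in $\PSU(d,2)$), and it guarantees that the relevant rank-$2$ groups over $\mathbb{F}_q$ contain enough noncentral involutions to generate a nonsolvable subgroup together with $x$. I expect the rank-$2$ base cases — verifying directly that a root element together with an appropriate involution generates a nonsolvable subgroup of $\SL^{\epsilon}(3,q)$, $\Sp(4,q)$ or $G_2(q)$ — to be the technically delicate part, while the parabolic induction itself is routine.
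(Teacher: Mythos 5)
Your overall strategy --- push $x$ into a maximal parabolic $P$ via Lemma \ref{lem:2.2}(a), project to the Levi, and induct on its components --- is the same as the paper's (which uses the end-node parabolic version, \cite[Lemma 2.1]{GS}, instead). The gap is in your residual case. First, the assertion that ``all relevant Levi components are $\SL(2,q)$'' forces $x$ to be in effect a root element is false: in any group of untwisted rank $2$ (e.g.\ $\Sp(4,q)$, $G_2(q)$, ${}^3D_4(q)$) every maximal parabolic has semisimple Levi rank $1$, so \emph{every} nontrivial unipotent element falls into your residual case, including the regular ones, and a regular unipotent element (a single Jordan block $J_4$ in $\Sp(4,q)$, say) does not lie in any proper rank-$2$ subsystem subgroup. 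Second, and more importantly, you have missed the device that makes the induction bottom out without rank-$2$ computations: an $A_1$-type Levi component is normalized by the split torus of the Levi, which induces diagonal automorphisms on it, so one reduces not to $\SL(2,q)$ but to a group $H$ with $\SL(2,q)\le H\le \GL(2,q)$ and $\SL(2,q)$ of even index --- exactly case (2) of Theorem \ref{inv}, already disposed of in Lemma \ref{lem:psl2}. This is the reason Theorem \ref{inv} is stated for such groups at all, and it is how the paper handles ${}^3D_4(q)$ and the $\GU(2,q)$- and $\GL(2,q^2)$-type Levis of $\SU(4,q)$. Your substitute (rank-$2$ subsystem subgroups plus unspecified ``direct constructions'' in $L^{\epsilon}(3,q)$, $\PSp(4,q)$, $G_2(q)$) therefore defers exactly the part that needs an argument.

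The one case the paper genuinely treats by hand is $\SU(3,q)$ with $x$ regular unipotent. Note that ${}^2A_2$ has twisted rank $1$, so Lemma \ref{lem:2.2}(a) does not apply there at all (there are not two distinct maximal parabolics over a common Borel); the paper's argument is a maximal-subgroup analysis showing that $x$ stabilizes a unique totally singular $1$-space, hence lies in a unique maximal parabolic and (by minimality) in no other maximal subgroup, so any involution outside that parabolic works. Your proposal gestures at ``the normalizer of a suitable torus'' here, which does not address a regular unipotent element, and this case is not reachable by your subsystem-subgroup fallback. You would need to supply both the diagonal-torus reduction and the $\SU(3,q)$ analysis to close the proof.
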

\begin{proof}
Since $p \ge 5$ and $p|q$, $G$ cannot be a Suzuki--Ree group, and by Lemma \ref{lem:psl2}, we may assume that the untwisted Lie rank is at least $2$. 
If $G$ is an almost simple group, then we may assume that $G=G_0$ and by Lemma \ref{lem:liftinvs}, we can lift $x$ to an element of order $p$ in the universal version of $G_0$. 
By \cite[Lemma 2.1]{GS}, %this is fine
 we may assume that $x$ is nontrivial in $P/U$ for some end node maximal parabolic subgroup $P$, with unipotent radical $U$, unless $G$ is ${^3}D_4(q)$, or $SU(d,q)$. 
 
So we may assume that $x$ acts nontrivially on a Levi subgroup $L$, and since $G$ is simply connected, so is $L$ (see \cite[2.6.5(f)]{GLS} for example). By induction, there exists an involution $y \in L$ such that $\langle x,y\rangle$ is not solvable; thus there exists an involution $y' \in G_0$ such that $\langle x,y\rangle$ is not solvable.   

If $G_0 \cong {^3}D_4(q)$, then we may assume that $x$ is nontrivial in $P/U$, for a maximal parabolic subgroup $P$. The Levi complement is of type $SL(2,q)$ or $SL(2,q^3)$, but a split torus normalizes both of these Levi complements and induces diagonal automorphisms on them. Thus we can reduce to the case that $SL(2,q) \le G \le GL(2,q)$, where $SL(2,q)$ has even index in $G$ when $q$ is odd.

 Now suppose that $G = SU(d,q)$ and $d \ge 5$. Then \cite[Lemma 2.1]{GS} implies that we may assume that $x$ is nontrivial in $P/U$, for some (not necessarily end-node) maximal parabolic subgroup $P$.  Therefore $x$ will act nontrivially on one of the components of the Levi complement of $P$, and these components are all nonsolvable since $p \ge 5$, and not of type $SL(2,q)$. 
 
 If $G = SU(4,q)$, then we may assume that $x$ is nontrivial in $P/U$ for some maximal parabolic subgroup $P$. Now the Levi complement $L$ of $P$ in $SU(4,q)$ is either isomorphic to $GU(2,q)$ or a normal subgroup of $GL(2,q^2)$ of index $q-1$; thus $(x,G)$ cannot be a minimal counterexample.
 
  The only other possibility is $G=SU(3,q)$. If $x$ is a transvection, then it is contained in a subgroup isomorphic to $GU(2,q)$. So
we may assume that $x$ is not a transvection and 
$x$ is therefore regular unipotent. Since all inner diagonal involutions of $PSU(3,q)$ lift to involutions in $SU(3,q)$, we can work in $PSU(3,q)$. From the list of maximal subgroups of $PSU(3,q)$ (see \cite[Theorem 6.5.3]{GLS} for example), we may assume that the only maximal subgroups that could contain $x$ are the maximal
parabolic subgroups since the other maximal subgroups of order divisible by $p$ are almost simple. 
Now $x$  only stabilizes one totally singular $1$-space, and so is only contained in one maximal parabolic subgroup. %checked this in MAGMA with FPRs
 So choose an involution $y$ that is not contained in this maximal parabolic subgroup. Then $\langle
x,y \rangle$ is not solvable. 
\end{proof}

%We may assume that $x$
%lifts to an element of order $p^a$. Let $v$ be a element of $V$.
%Since $x$ acts irreducibly, the vectors $v$, $v^x$, $v^{x^2}$,
%\ldots, $v^{x^{p-1}}$ form a basis for $V$.  Moreover, $x^p$ acts
%centrally on $V$, so $x$ lies in a subgroup of type $GL(1,q) \wr
%S_p$.

 \begin{lemma} 
 If $G$ or $G_0$ is a classical group, then $(x,G)$ cannot be a minimal counterexample. 
\end{lemma}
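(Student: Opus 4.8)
The plan is to dispatch the remaining semisimple case, the unipotent case having been settled in Lemma~\ref{unipotents}, field automorphisms in Lemma~\ref{lem:outinvs}, and the rank-one linear groups in Lemmas~\ref{lem:Aninvs} and~\ref{lem:psl2}. Since $p\ge 5$ and $p\nmid q$, the element $x$ is semisimple; by Lemma~\ref{lem:outinvs} I may assume $x\in\Inndiag(G_0)$, and by Lemma~\ref{lem:liftinvs} I may lift $x$ to an element of order $p$ acting on the natural module $V$ of the ambient matrix group ($\GL^{\epsilon}(d,q)$, $\Sp(d,q)$ or $\O^{\epsilon}(d,q)$). I would then argue from the $\langle x\rangle$-module decomposition of $V$, splitting according to whether or not $x$ stabilises a proper subspace of one of the two standard types.

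First I would treat the reducible case. If $x$ stabilises a proper nonzero totally singular subspace then $x$ lies in a proper parabolic subgroup, and Lemma~\ref{lem:2.2}(b) (the untwisted rank being at least $2$ once the rank-one groups are excluded) produces a maximal parabolic $P=JU$ into which $x$ is conjugate so that $x$ acts nontrivially on some Levi component $L$; if instead $x$ stabilises a proper nondegenerate subspace $W$ (equivalently, $x$ has a trivial constituent or more than one self-dual irreducible constituent), then $x$ lies in the $\mathrm{Cl}(W)\times\mathrm{Cl}(W^{\perp})$ subgroup and acts nontrivially on one factor $L$. In either situation $L$ is a smaller linear, unitary, symplectic or orthogonal group (times a central torus) satisfying the hypotheses of Theorem~\ref{inv}, and $x$ is not centralised by $L$; by minimality there is an involution $y\in L\le G$ with $\langle x,y\rangle$ nonsolvable, a contradiction. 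The one delicate point is when the relevant factor is of type $\SL(2,q)$ with $q$ odd: here, exactly as in the ${}^3D_4(q)$ reduction inside Lemma~\ref{unipotents}, I would use that a diagonalisable torus of $G$ normalising $L$ induces diagonal automorphisms, placing us in the even-index situation $\SL(2,q)\le G'\le\GL(2,q)$ covered by case~(2) of Theorem~\ref{inv}. The low-rank unitary group $PSU(3,q)$, whose parabolic has a solvable Levi, I would fold into the small-case analysis below.

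This leaves the case that $x$ has no invariant subspace of either kind, so that $x$ acts irreducibly (linear case) or with a single self-dual irreducible constituent (classical case). Then $x$ lies in a cyclic maximal torus $T$ of Singer type, with $N_G(T)=T.e$ metacyclic. The crucial use of minimality is that $x$ cannot lie in any proper subgroup that itself satisfies the hypotheses of Theorem~\ref{inv}; consequently $x$ lies in no proper almost simple subgroup and in no proper subfield or field-extension subgroup of classical type, since each of these would supply the required involution by induction. After disposing of the few overgroups not of this shape---the $\SL(2,q)$-type and monomial imprimitive overgroups, and the extraspecial-normaliser ($C_6$) subgroups, ruled out or handled by the order of $x$ and $p\ge 5$---the only solvable maximal overgroups of $x$ that remain are a bounded number of conjugates of the torus normaliser $N_G(T)$. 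I would then run the counting argument of Lemma~\ref{lem:psl2}: the number of $G$-conjugates of $N_G(T)$ containing $x$ is at most $|C_G(x)|/|C_{N_G(T)}(x)|$, so the number of involutions $y$ for which $\langle x,y\rangle$ is solvable, and hence lies in such a conjugate, is at most $i_2(N_G(T))\,|C_G(x)|/|C_{N_G(T)}(x)|$, and I would verify that this is strictly smaller than $i_2(G)$ in every family and for both parities of $q$. Any involution $y$ lying outside all these conjugates then gives $\langle x,y\rangle$ nonsolvable.

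I expect this last case to be the main obstacle. The reduction steps are essentially formal once Lemma~\ref{lem:2.2}(b) and minimality are in hand, but the irreducible case admits no induction and rests entirely on the involution count; the real work is in uniformly estimating $i_2(G)$, $i_2(N_G(T))$ and the centraliser orders $|C_{N_G(T)}(x)|$ across the four classical families, and in checking by hand the small-dimensional and exceptional-isomorphism configurations (for instance those in which a $C_6$ or imprimitive overgroup of order divisible by $p$ genuinely occurs, or $PSU(3,q)$) where the generic inequality is tight.
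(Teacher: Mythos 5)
Your overall strategy coincides with the paper's: reduce to a semisimple element of order $p$ in the ambient matrix group via Lemmas~\ref{lem:liftinvs}, \ref{lem:outinvs} and \ref{unipotents}, decompose the natural module into irreducible $\langle x\rangle$-constituents, induct into a smaller classical group whenever $x$ preserves a proper totally singular or nondegenerate subspace, and in the irreducible case exploit that $C_G(x)$ is the Singer-type torus $T$, so that $x$ lies in a unique maximal overgroup $N_G(T)$ (your counting bound $|C_G(x)|/|C_{N_G(T)}(x)|$ collapses to $1$ because $C_G(x)=T\le N_G(T)$, which is exactly the paper's observation; no estimate of $i_2(G)$ is actually needed). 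The paper organises the reducible analysis by the parameter $e$, the dimension of the nontrivial irreducible constituents, rather than by subspace type, and in the irreducible case it first folds the symplectic and orthogonal groups into $\GU(d/2,q)$ or $\Om^-(d/2,q^2)$ subgroups and shows $d$ must be an odd prime before invoking \cite{GPPS}; that extra reduction is what makes the claim that the only remaining overgroup is $N_G(T)$ checkable, and you would need comparable input to justify it for all four families simultaneously, as you acknowledge.

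The one genuine gap is your treatment of the $\SL(2,q)$ obstruction. When $G=\Sp(d,q)$ with $q$ odd, $p\mid q+1$ and $e=2$ with all constituents nondegenerate, the induction lands in a factor $\Sp(2,q)\cong\SL(2,q)$ of the stabiliser $\Sp(2,q)\times\Sp(d-2,q)$ of a nondegenerate $2$-space, and $\SL(2,q)$ with $q$ odd is excluded from the inductive hypothesis. Your proposed fix, namely a torus of $G$ normalising this factor and inducing diagonal automorphisms as in the ${}^3D_4(q)$ reduction, fails here: any element of $\Sp(d,q)$ stabilising a nondegenerate $2$-space $U$ restricts to an isometry of $U$, i.e.\ to an element of $\Sp(U)$ itself, so only inner automorphisms of the $\SL(2,q)$ factor are induced. (The Levi complement of a parabolic is normalised by a split torus acting with a nontrivial similitude factor, which is what made the ${}^3D_4$ argument work; a perpendicular decomposition provides no such torus.) The paper's fix is different: a Sylow $p$-subgroup of $\Sp(4,q)$ with $p\mid q+1$ lies in a subgroup $\GU(2,q)$, and $[\GU(2,q):\SU(2,q)]=q+1$ is even, so hypothesis (2) of Theorem~\ref{inv} applies. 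You need this, or some equivalent device, to close the symplectic $e=2$ case; the rest of your outline tracks the paper's proof.
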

 \begin{proof} 
  By Lemmas \ref{lem:psl2}, \ref{lem:outinvs}, \ref{lem:liftinvs} and \ref{unipotents} we may assume that $x$ is semisimple and that $G_0$ or $G/Z(G)$ is not $PSL(2,q)$. Moreover, we can and will assume that $x$ is an element of order $p$ in $G$ where $SL(n,q) \le G \le \GL_{n}(q)$, $SU(d,q) \le G \le \GU(n,q)$, $G=Sp(n,q)$ or $G=\Om^{\epsilon}(n,q)$ by Lemma \ref{lem:liftinvs} and \cite[Lemma 3.11]{Bur2}. In case O, we may assume that $d \ge 7$. If $G$ is a unitary group, let $e$ be the smallest positive integer such that $p \mid q^{2e}-1$; otherwise let  $e$ be the smallest positive integer such that $p \mid q^{e}-1$. Consider a decomposition of $V$ into irreducible $\langle x\rangle$-invariant spaces
  \begin{align} \label{V}
 V= (W_1 \oplus W_1') \perp \cdots \perp  (W_k \oplus W_k') \perp U_1 \perp \cdots \perp U_l
\end{align}
  where the $W_i$ and $W_i'$ are totally singular, and the $U_i$ and $W_i \oplus W_i'$ are nondegenerate. Each irreducible subspace on which $x$ acts nontrivially  has dimension $e$. In case $U$, we can and will assume that the $1$-spaces on which $x$ acts trivially are nondegenerate. We consider five cases separately.
 
  \begin{list}{\labelitemi}{\leftmargin=0.5em}
 \item[(i)] Suppose that $e=1$. In cases L, S, and O, all of the irreducible subspaces on which $\langle x\rangle$ acts nontrivially must be totally singular since $p | q-1$. Moreover, $q \ge 7$ since $p \ge 5$. So in cases S and L, we may assume that 
$x$ acts nontrivially on $W_1 \oplus W_2$ and so $x$ is contained in a type subgroup $\GL(2,q)$, and  $(x,G)$ is not a minimal counterexample in this case. In case O, since $d \ge 7$, we may assume that there are totally singular subspaces $W_1$, $W_2$, $W_3$ such that $W_1 \oplus W_2 \oplus W_3$ is totally singular and $x$ invariant; thus $x$ is contained in a type $SL(3,q)$ subgroup. In case U, if $p|q-1$, then we can argue as in cases S and L to reduce to the case $G \cong GL(2,q^2)$.  If  $p | q+1$, then $q \ge 4$ and we may assume that all of the subspaces in (\ref{V}) are nondegenerate; % any eigenvalue in $\mathbb{F}_{q^2}$ in a unitary group has order dividing $q+1$
 so $x$ is contained in a type $GU(1,q)^d$ subgroup and therefore we can reduce to the case $G\cong GU(2,q)$. % \cite[4.2.8i]{KL} shows we can get  GU2 not just SU2 in SUn.
\item[(ii)] Suppose that $e=2$. In case $U$, all of the $2$-spaces in (\ref{V}) are totally singular since even dimensional unitary groups do not contain irreducible elements. So in case $U$, $x$ acts irreducibly on $W_1$ and we reduce to the case $G \cong GL(2,q)$. In the other cases, $q\ge 4$ since $p \ge 5$. In cases L and S, if there is a totally singular $2$-space $W_1$ in (\ref{V}), then we can reduce to the case $G \cong GL(2,q)$.  If there are no totally singular $2$-spaces in case S, then all of the $2$-spaces in \ref{V} are nondegenerate,  and we can reduce to the case $G \cong Sp(4,q)$. In this case, we may assume that $q$ is odd since  if $q$ is even then we can reduce to the case $G \cong Sp(2,q)$. But when $q$ is odd, a Sylow $p$-subgroup is contained in a subgroup isomorphic to $GU(2,q)$ (see \cite[p. 118]{KL}); thus we do not have a minimal counterexample in this case either. % Checked in MAGMA
In case O, either we can reduce to the case $G \cong \Om(d,q)$ with $d=5$ or $6$,  or all of the subspaces are totally singular. In this case, $x$ stabilizes $W_1 \oplus W_2$, and is thus contained in a subgroup of type $SL(4,q)$. Thus $(x,G)$ cannot be a minimal counterexample.
\item[(iii)] Suppose that $e=3<d$. If there is a totally singular $3$-space $W_1$ in (\ref{V}), then in all cases $x$ will be contained in a subgroup of type $SL(3,q)$ (or $SL(3,q^2)$). Otherwise, all of the $3$-spaces in \ref{V} are nondegenerate and we are in case U or O. % Cant have an irreducible odd dimensional space in case S.
In case U, we can reduce to the case $G \cong SU(3,q)$, and $q \ne 2$ since $GU(3,2)$ has order $2^3 3^4$. In case O, we have $d \ge 7$ and so we can reduce to the case $G = \Om^{\pm}(6,q)$. 
\item[(iv)] Suppose that $4 \le e < d$. If there is a totally singular $e$-space in (\ref{V}), then we can reduce to the $e$ dimensional linear case. Otherwise $x$ acts irreducibly on a nondegenerate $e$-space, and we can reduce to the case $G= \Sp(e,q)$ in case S, $SU(e,q) \le G  \le GU(e,q)$ in case U ($e$ odd), and $G = \Om^{-}(e,q)$ ($e$ even) in case O.  % must have $\epsilon=-$ to act irreducibly
\item[(v)] Suppose that $e=d$, so that $x$ acts irreducibly. In case S, $x$ must be contained in a  $GU(d/2,q)$ subgroup  \cite[p. 118]{KL}. In case O, if $d/2$ is odd then $x$ must be contained in a subgroup $H$ of type $GU(d/2,q)$. % and this group is just some cover of PSU so case(3) in the theorem applies
 If $d/2$ is even, then $H$ is contained in a  $\Om^{-}(d/2,q^2)$ subgroup . So we may assume that $G$ is linear or unitary. Now observe that if $d$ is even, then $G$ is linear and   $x$ is contained in a normal subgroup of $GL(2,q^{d/2})$ of index dividing $q-1$ (\cite[(4.3.16)]{KL}), and so if $(x,G)$ is a minimal counterexample, then $d/2$ must be odd. But if $d/2$ is odd, then $x$ is contained in a type $GL(d/2,q^2)$ subgroup and so $(x,G)$ cannot be a minimal counterexample in this case either. So $d$ must be odd and in fact $d$ must be an odd prime since otherwise $x$ is contained in a type $GL^{\epsilon}(d/r,q^r)$ subgroup. We can list the possible maximal subgroups of $G$ that could contain $x$ using \cite{GPPS} and \cite{KL}. 
Since $d$ is odd, all involutions in $PGL^{\epsilon}(d,q)$ lift to involutions in $SL^{\epsilon}(d,q)$; % either look at Table 4.5.2 or an elementary calc.
 thus we can work in the almost simple group $G/Z(G)$. In particular, we may assume that $x$ is not contained in any almost simple  subgroup of $G/Z(G)$.  In this case, the only possible maximal subgroups containing $x$ are of type $GL^{\epsilon}(1,q^d).d$. Since $p \nmid d$, $x$ is contained in a cyclic maximal torus $T$, and since $C_G(x)=T$, $x$ is contained in only one maximal subgroup. Thus we can pick an involution $y$ not contained in this maximal subgroup, and $\langle x,y\rangle$ will not be solvable. \qedhere
 \end{list}

\end{proof}

\begin{lemma} \label{lem:nosc}
 If $G$ is a finite group of Lie type and $Z(G) \ne 1$, then $(x,G)$ cannot be a minimal counterexample unless $G$ is (simply connected) $E_7(q)$.
 \end{lemma}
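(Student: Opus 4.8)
The plan is to reduce, via the preceding lemma, to the case where $G$ is an \emph{exceptional} group of Lie type, and then to exploit the fact that the only such groups with nontrivial centre have centre of order $3$. First I would record which finite groups of Lie type have $Z(G)\neq 1$. The groups with $G$ or $G/Z(G)$ of classical type are already disposed of by the previous lemma, so I may assume $G$ is exceptional. For exceptional type the fundamental group is trivial for $G_2$, $F_4$, $E_8$ and for the twisted groups ${}^3D_4$ and the Suzuki--Ree families, so these all have trivial centre; the only exceptional groups with $Z(G)\neq 1$ are therefore the simply connected groups $E_6^{\epsilon}(q)$ with $3\mid q-\epsilon$, for which $|Z(G)|=3$, and $E_7(q)$ with $q$ odd, for which $|Z(G)|=2$. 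The group $E_7(q)$ is the stated exception, so it remains to treat $G=E_6^{\epsilon}(q)$ (simply connected).

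For $G=E_6^{\epsilon}(q)$ the centre has order $3$, which is \emph{odd}. Since $p\ge 5$ we have $p\nmid |Z(G)|$, so by Lemma \ref{lem:liftinvs}(b) I may take $x$ to have order $p$ in $G$; its image $\bar x$ then has order $p$ in the simple quotient $G/Z(G)\cong E_6^{\epsilon}(q)$. The crucial elementary observation is that, because $|Z(G)|$ is odd, every involution of $G/Z(G)$ lifts to an involution of $G$: the full preimage in $G$ of a subgroup $\langle\bar y\rangle$ generated by an involution is a central extension of a cyclic group by the central subgroup $Z(G)$, hence abelian of order $2|Z(G)|$, and its (cyclic) Sylow $2$-subgroup is generated by a genuine involution mapping to $\bar y$.

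I would then close the argument by minimality. The quotient $G/Z(G)$ is simple, hence almost simple, and has order strictly less than $|G|$, so $(\bar x,\,G/Z(G))$ is not a minimal counterexample and Theorem \ref{inv} supplies an involution $\bar y\in G/Z(G)$ with $\langle\bar x,\bar y\rangle$ nonsolvable. Lifting $\bar y$ to an involution $y\in G$ as above, the group $\langle x,y\rangle$ maps onto $\langle\bar x,\bar y\rangle$, which is nonsolvable; hence $\langle x,y\rangle$ is nonsolvable, contradicting the assumption that $(x,G)$ is a counterexample.

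The step I expect to be essential --- and the precise reason $E_7(q)$ must be excluded --- is the lifting of involutions. When $|Z(G)|$ is odd the lift is automatic and the reduction to the simple quotient is immediate, but for simply connected $E_7(q)$ with $q$ odd the centre has order $2$, so an involution of $G/Z(G)$ may lift only to an element of order $4$, and this clean reduction breaks down. That case therefore genuinely requires a different argument --- producing an \emph{honest} involution inside a suitable subsystem or Levi subgroup on which $x$ acts nontrivially --- which is why it is isolated as the exception in the statement.
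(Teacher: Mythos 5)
Your proposal is correct and follows essentially the same route as the paper: reduce to exceptional type via the preceding lemma, observe that the only exceptional groups with nontrivial centre are simply connected $E_6^{\epsilon}(q)$ (centre of odd order $3$) and $E_7(q)$, apply Theorem \ref{inv} to $G/Z(G)$ by minimality, and lift the resulting involution using the oddness of $|Z(G)|$. The paper performs the lift by taking $y'=y^{|Z(G)|}$ for a preimage $y$ with $y^2\in Z(G)$, which is the same mechanism as your Sylow $2$-subgroup argument in the abelian preimage.
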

\begin{proof} 
 By our previous work, we may assume that $G$ is an exceptional group. But 
 then the centre of $G$ is either trivial or of odd order, or $G$ is $E_7(q)$. So if $(x,G)$ is a minimal counterexample, then Theorem \ref{inv} holds for $G/Z(G)$. But then there exists $y \in G$ such that $\langle x,y \rangle$ is not solvable and  $ y^2 \in Z(G)$. Since $Z(G)$ has odd order, $y'= y^{|Z(G)|}$ is an involution in $G$ and $\langle x,y'\rangle$ is not solvable.
 \end{proof}

\begin{lem} \label{lem:El}
 Suppose that $G$ is almost simple or a finite group of Lie type, and that $G_0$ or $G/Z(G)$ is one of the simple groups $F_4(q)$, $E_6(q)$, $E_7(q)$, $E_8(q)$, or ${^2}E_6(q)$. Then $(x,G)$ cannot be a minimal counterexample. 
\end{lem}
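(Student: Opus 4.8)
The plan is to reduce, in every case, to a proper subgroup of $G$ that is either a classical group or a smaller exceptional group of Lie type, and then invoke minimality. By Lemma \ref{unipotents} we may assume that $x$ is semisimple, and by Lemma \ref{lem:outinvs} that $x$ is inner-diagonal; by Lemma \ref{lem:nosc} and the preceding results we may take $G = \Inndiag(G_0)$, the only residual subtlety being the central involution when $G_0 = E_7(q)$ and $G$ is the simply connected group, which I shall handle by always working inside a subgroup in which honest involutions are plentiful. Thus $x$ is a noncentral semisimple element of order $p \ge 5$, and since $p \ge 5$ is prime it divides none of the relevant lattice indices (each a power of $2$ or $3$) that occur below.

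The first case is when $x$ is contained in a proper parabolic subgroup. Here I would apply Lemma \ref{lem:2.2}(b): since each of $F_4(q)$, $E_6(q)$, ${}^2E_6(q)$, $E_7(q)$, $E_8(q)$ has rank at least $2$, there is a maximal parabolic $P$ with Levi complement $J$ such that no Levi component of $J$ centralizes $x$. A direct inspection of the (extended) Dynkin diagrams shows that every maximal parabolic of these groups has a Levi component $L$ of rank at least $2$ (for instance $E_7$, $D_7$ or $A_7$ in $E_8$; $A_5$ in $E_6$; and $B_3$, $C_3$ or $A_2$ in $F_4$). As $x$ does not centralize $L$, its image in $\mathrm{Aut}(L)$ is a nontrivial inner-diagonal element of order $p$, and since $L$ is a classical group of rank at least $2$ or a smaller exceptional group, $L\langle x\rangle$ satisfies the hypotheses of Theorem \ref{inv} and is strictly smaller than $G$. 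By minimality there is an involution $y \in L$ with $\langle x, y\rangle$ nonsolvable, and hence $(x,G)$ is not a minimal counterexample.

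The second case is when $x$ lies in no proper parabolic subgroup, so that $x$ is a regular-type semisimple element lying in a maximal torus $T$ contained in no proper parabolic. Here I would use the Borel--de Siebenthal classification of $F$-stable maximal-rank (subsystem) subgroups to choose one, call it $H$, that contains $T$ and has a classical factor of rank at least $2$: for example $D_8 = \mathrm{Spin}^{+}_{16}(q)$ in $E_8$, $A_7$ or $A_1 D_6$ in $E_7$, $A_1 A_5$ in $E_6$, $A_1\,{}^2A_5$ in ${}^2E_6$, and $B_4$ or $A_1 C_3$ in $F_4$. Because the index of the corresponding root lattice in the weight lattice of $G$ is a power of $2$ or $3$ and $p \ge 5$, the element $x$ is noncentral in that classical factor $L$ and has order $p$ in $L/Z(L)$; the classical case already established (together with Lemmas \ref{lem:psl2}, \ref{lem:liftinvs} and \ref{unipotents}) then supplies an involution $y \in L$ with $\langle x,y\rangle$ nonsolvable. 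In the residual situation where $x$ is regular (so $C_G(x) = T$) and no such subsystem subgroup contains $T$, the element $x$ lies in only a bounded number of maximal subgroups---essentially $N_G(T)$---and a counting argument in the spirit of Lemma \ref{lem:psl2}, comparing $i_2(G)$ with the number of involutions in the few overgroups of $x$, produces the required involution.

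The main obstacle I anticipate is this second case: ensuring that for \emph{every} torus $T$ not lying in a parabolic one can choose an $F$-stable subsystem subgroup through $T$ with a nonsolvable factor of rank at least $2$ on which $x$ is noncentral, and carrying out the counting estimate for the genuinely regular (Coxeter-type) tori where the only relevant overgroup is the torus normalizer. This requires matching the cyclotomic factor $\Phi_d(q)$ dividing $p$ against the orders of the available maximal-rank subgroups, and is where the bulk of the case analysis (using the known maximal subgroups of the exceptional groups) will be concentrated; by contrast the parabolic case reduces cleanly to the already-settled classical and smaller-rank groups.
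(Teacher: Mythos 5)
Your two-case division (parabolic versus non-parabolic) and the treatment of the parabolic case via Lemma \ref{lem:2.2} are exactly what the paper does, so the real comparison concerns the non-parabolic case. There the paper does not track the maximal torus $T$ through Borel--de Siebenthal subgroups; instead it fixes the cyclotomic factor $\Phi_e(q)$ divisible by $p$, computes the full $p$-part of $|G|$, and exhibits a subsystem subgroup whose order has the \emph{same} $p$-part, so that a Sylow $p$-subgroup (hence a conjugate of $x$) lies inside it. This is logically cleaner than your torus-based version because it sidesteps the question you flag as your main obstacle --- whether a given non-parabolic torus lies in a suitable maximal-rank subgroup with $x$ projecting noncentrally onto a nonsolvable factor --- and it also makes the noncentrality automatic: since the $p$-part of $|G|$ equals the $p$-part of the order of the quasi-simple factor and $p\ge 5$ is prime to the relevant centers, $x$ cannot project trivially. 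Note also that your lattice-index argument only rules out $x\in Z(L)$, not $x\in C_G(L)$ with $x$ living in the other factor, which is a genuine weakness of the torus formulation.

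Two concrete points would sink your proof as written. First, your candidate list consists only of split subsystem subgroups ($D_8$, $A_7$, $A_1D_6$, $A_1A_5$, $B_4$, $A_1C_3$, \dots), and these do not capture several of the cyclotomic cases: for instance a primitive prime divisor of $q^8-q^4+1=\Phi_{24}(q)$ in $E_8(q)$ divides the order of no split classical subsystem subgroup, and the paper must use $SU(3,q^4)$; likewise $SU(9,q)$, $SU(5,q^2)$, and ${^3}D_4(q)\circ{^3}D_4(q)$ are needed. So the ``bulk of the case analysis'' you defer is not a routine matching but requires the twisted subsystem subgroups, and it is precisely the content of the paper's five tables. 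Second, in the residual case (which turns out to be exactly $E_8(q)$ with $p\mid\Phi_{15}(q)\Phi_{30}(q)$) your claim that the only overgroup of $x$ is essentially $N_G(T)$ fails for $p=31$: by \cite{LSe1} the exotic local maximal subgroup $5^3.SL(3,5)$ of $E_8(q)$ has order divisible by $31$ and must be treated separately (the paper disposes of it by minimality since $SL(3,5)$ is quasi-simple). For $p\ne 31$ no counting argument is needed at all: $x$ lies in a unique maximal subgroup $N_G(T)$, so any involution outside it already works.
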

\begin{proof} 
 We may assume that $G$ is almost simple with $x \in {\rm Inndiag}(G_0)$ or that $G$ is (simply connected) $E_7(q)$ by Lemmas \ref{lem:outinvs} and \ref{lem:nosc}. Moreover, we may assume that $x$  is semisimple in both cases by Lemma
\ref{unipotents}. First suppose that $G$ is one of the untwisted groups. If $p|q-1$ then $x$ is contained in a Borel subgroup and therefore in a $P_1$ parabolic subgroup. By Lemma \ref{lem:2.2} we may assume that $x$ is contained in a Levi subgroup of type $C_3(q)$ or $D_{l-1}(q)$,
% here x \in L_J = M_J H  but outdiag of type D groups is 2 or 4 so $x \in M_J$.
 and so $(x,G)$ cannot be a minimal counterexample. So we may assume that $p\nmid q-1$ in the untwisted cases.  Now suppose that $x$ is contained in some maximal parabolic subgroup. Again we may assume that $x$ acts noncentrally on each component of the Levi complement. It is easily verified that $(x,G)$ cannot be a minimal counterexample since we can work in one of the groups of Lie type in the Levi complement to find an involution $y$ such that $\langle x,y\rangle$ is not solvable. % we can check that $x$ is in M_J since Outdiag is not divisible by p. 
 
So we may assume that $x$ is not contained in any parabolic subgroups. If this is the case, then the centralizer of $x$ is reductive and contains no unipotent
elements. 
%Thus, $C_{G}(x)$ is a torus and so $|C_{G_0}(x)| \le
%(q+1)^l$, where $l$ is the Lie rank (see \cite{Seitz}).

First suppose that $G=E_7(q)$, and without loss of generality we may assume that $G$ is simply connected.  We know that
\begin{displaymath} 
	 |G|  = q^{63} \prod_{d_i \in \{2,6,8,10,12,14,18\}} (q^{d_i}-1),
\end{displaymath}
so let $e$ be the smallest $d_i$ such that $p | (q^{d_i}-1)$. If $e=14$ then either $p|q^7-1$ or $p| q^7+1$. If $p|q^7-1$ then the $p$-part of $|G|$ (that is, the largest power of $p$ dividing $|G|$) is the $p$-part of  $(q^7-1)/(q-1)$ and so a Sylow $p$-subgroup is contained in a type $SL(7,q)$ subsystem subgroup. Thus $(x,G)$ cannot be a minimal counterexample in this case. If $p| q^7+1$, then the $p$-part of $|G|$ is the $p$-part of $(q^7+1)/(q+1)$ and so a Sylow $p$-subgroup is contained in a type $SU(7,q)$ subgroup. Similarly we can show that $(x,G)$ cannot be minimal counterexample for all values of $p$. We illustrate this work in Table \ref{tab:SylE7} below (note that $p\nmid q-1$ since we are assuming that $x$ is not contained in any parabolic subgroups). We do the same for $F_4(q)$, $E_6(q)$, ${^2}E_6(q)$, and $E_8(q)$ and record our results in Tables \ref{tab:SylF4}, \ref{tab:SylE6}, \ref{tab:Syl2E6} and \ref{tab:SylE8} respectively. 

The only case where we have not shown that $(x,G)$ is not a minimal counterexample is when $G = E_8(q)$ and $p$ is a primitive prime divisor of $q^{30}-1$ or $q^{15}-1$. It follows that $p  \equiv  1 \imod{15}$ or $p  \equiv  1 \imod{30}$, and in particular, that $p \ge 31$. If $p=31$, then the Sylow $31$-subgroups are cyclic % since it's contained in the cyclic torus
and $x$ is contained in an exotic local subgroup $5^3.SL(3,5)$. % The element of order 31 in SL(3,5) is contained in a Sylow 31 and so is x, but these cyclic groups are conjugate.
Therefore we may assume that $p \ne 31$. The maximal subgroups of $E_8(q)$ are described in \cite[Theorem 8]{LSe1} and if $(x,G)$ is a minimal counterexample, then $x$ can only be contained in a (single) torus $T$ of type $q^8+q^7-q^5-q^4-q^3+q+1$ or $q^8-q^7+q^5-q^4+q^3-q+1$. In this situation, we can pick an involution $y \in G$ that is not  contained in the normalizer of $T$ and $\langle x,y\rangle$ will not be solvable. \qedhere
\begin{table}[htdp]
\begin{center}
\caption{Subgroups of $E_7(q)$ containing a Sylow $p$-subgroup\label{tab:SylE7}}

\begin{tabular}{cccc}
\hline 
$e$ & $p$ divides & $p$-part of $G$ & Subgroup type containing a  \\
 &  & is $p$-part of & Sylow $p$ subgroup \\
 \hline
 $18$ & $q^9+1$ & $q^6-q^3+1$ & ${^2}E_6(q)$ \\ % twisted E6 occurs as the centralizer of the t4p involution in odd characteristic; it is a subgroup of maximal rank listed in the tables also.
$18$ & $q^9-1$ & $q^6+q^3+1$ & $E_6(q)$ \\
$14$ & $q^7+1$ & $(q^7+1)/(q+1)$ & $SU(7,q)$ \\
$14$ & $q^7-1$ & $(q^7-1)/(q-1)$ & $SL(7,q)$ \\
$12$ & $q^6+1$ & $(q^6+1)/(q^2+1)$ & $F_4(q)$ \\
$12$ & $q^6-1$ & x &  \\
$10$ & $q^5+1$ & $(q^5+1)/(q+1)$ & $SU(7,q)$ \\
$10$ & $q^5-1$ & $(q^5-1)/(q-1)$ & $SL(7,q)$ \\
$8$ & $q^4+1$ & $q^4+1$ & $SL(8,q)$ \\
$8$ & $q^4-1$ & x& \\
$6$ & $q^3+1$ & $(q^2-q+1)^3$ & ${^2}E_6(q)$ \\
$6$ & $q^3-1$ & $(q^2+q+1)^3$ & $E_6(q)$ \\
$2$ & $q+1$ & $(7,p)(5,p)(q+1)^7$ & $SU(8,q)$ \\ % Even the extra 5 and 7 with be contained in $(q+1) \wr S_8$
$2$ & $q-1$ & x &\\ 
\hline
\end{tabular} 
\end{center}
\end{table}

\begin{table}[htdp]
\begin{center}
\caption{ Subgroups of $F_4(q)$ containing a Sylow $p$-subgroup\label{tab:SylF4}}
\begin{tabular}{cccc}
\hline 
$e$ & $p$ divides & $p$-part of $G$ & Subgroup type containing a  \\
 &  & is $p$-part of & Sylow $p$ subgroup \\
 \hline
$12$ & $q^6+1$ & $q^4-q^2+1$ & ${^3}D_4(q)$ \\
$12$ & $q^6-1$ & x &\\
$8$ & $q^4+1$ & $q^4+1$ & $SO(9,q)$ \\
$8$ & $q^4-1$ & $(q^2+1)^2$ & $SO(9,q)$ \\
$6$ & $q^3+1$ & $(q^2-q+1)^2$ & ${^3}D_4(q)$  \\
$6$ & $q^3-1$ & $(q^2+q+1)^2$ & ${^3}D_4(q)$ \\
$2$ & $q+1$ & $(q+1)^4$ & $SO(9,q)$ \\
$2$ & $q-1$ & x &  \\
\hline
\end{tabular} 
\end{center}
\end{table}

\begin{table}[htdp]
\begin{center}
\caption{ Subgroups of $E_6(q)$ containing a Sylow $p$-subgroup\label{tab:SylE6}}
\begin{tabular}{cccc}
\hline 
$e$ & $p$ divides & $p$-part of $G$ & Subgroup type containing a  \\
 &  & is $p$-part of & Sylow $p$ subgroup \\
 \hline
$12$ & $q^6+1$ & $q^4-q^2+1$ & $F_4(q)$ \\
$12$ & $q^6-1$ & x &  \\
$9$ & $q^9-1$ & $q^6+q^3+1$ & $SL(3,q^3)$ \\
$8$ & $q^4+1$ & $q^4+1$ & $F_4(q)$ \\
$8$ & $q^4-1$ & $(q^2+1)^2$ & $F_4(q)$ \\
$6$ & $q^3+1$ & $(q^2-q+1)^2$ & $F_4(q)$ \\
$6$ & $q^3-1$ & $(q^2\!+\!q\!+\!1)^3$ & $SL(3,q) \! \circ \! SL(3,q)\! \circ \!SL(3,q)$ \\
$5$ & $q^5-1$ & $q^4+q^3+q^2+q+1$ & $SL(5,q)$ \\ $2$ & $q+1$ & $(q+1)^4$ & $F_4(q)$\\ % Even the extra 5 and 7 with be contained in $(q+1) \wr S_8$
$2$ & $q-1$ & x &\\ 
\hline
\end{tabular} 
\end{center}
\end{table}

\begin{table}[htdp]
\begin{center}
\caption{ Subgroups of ${^2}E_6(q)$ containing a Sylow $p$-subgroup\label{tab:Syl2E6}}
\begin{tabular}{cccc}
\hline 
$e$ & $p$ divides & $p$-part of $G$ & Subgroup type containing a  \\
 &  & is $p$-part of & Sylow $p$ subgroup \\
 \hline
$12$ & $q^6+1$ & $q^4-q^2+1$ & $F_4(q)$ \\
$12$ & $q^6-1$ & x &  \\
$9$ & $q^9+1$ & $q^6-q^3+1$ & $SU(3,q^3)$ \\
$8$ & $q^4+1$ & $q^4+1$ & $F_4(q)$ \\
$8$ & $q^4-1$ & $(q^2+1)^2$ & $F_4(q)$ \\
$6$ & $q^3+1$ & $(q^2-q+1)^3$ & $SU(3,q) \circ SU(3,q) \circ SU(3,q)$ \\
$6$ & $q^3-1$ & $(q^2+q+1)^2$ &  $F_4(q)$\\
$5$ & $q^5+1$ & $q^4\!-\!q^3\!+\!q^2\!-\!q\!+\!1$ & $SO^{-}(10,q)$ \\ % was an extra (5,p)  but p=5 would be impossible since 5 | q5+1  implies 5 |q+1 by FLT \\
$2$ & $q+1$ &x &\\ 
$2$ & $q-1$ &  $(q-1)^4$& $F_4(q)$\\ 
\hline
\end{tabular} 
\end{center}
\end{table}

\begin{table}[htdp]
\begin{center}
\caption{Subgroups of $E_8(q)$ containing a Sylow $p$-subgroup\label{tab:SylE8}}
\begin{tabular}{cccc}
\hline 
$e$ & $p$ divides & $p$-part of $G$ & Subgroup type containing a  \\
 &  & is $p$-part of & Sylow $p$ subgroup \\
 \hline
$30$ & $q^{15}+1$ & $q^8 \!+ \!q^7\! - \! q^5\! - \! q^4\! - \! q^3\! + \! q\! + \! 1$ & see Lemma \\ 
$30$ & $q^{15}-  1$ & $q^8\! - \! q^7\! + \! q^5\! - \! q^4\! + \! q^3\! - \! q\! + \! 1$ & see Lemma\\ % Note there should be a (5,p) factor here but then $p$ must divide one of the smaller powers of p
$24$ & $q^{12}+1$ & $q^8-q^4+1$ & $SU(3,q^4)$ \\ 
$24$ & $q^{12}-1$ & x & \\
$20$ & $q^{10}+1$ & $q^8-q^6+q^4-q^2+1$ & $SU(5,q^2)$ \\ 
$20$ & $q^{5}+1$ & $(q^4-q^3+q^2-q+1)^2$ & $SU(5,q) \circ SU(5,q)$ \\
$20$ & $q^{5}-1$ & $(q^4+q^3+q^2+q+1)^2$ & $SL(5,q) \circ SL(5,q)$ \\
 $18$ & $q^9+1$ & $q^6-q^3+1$ & $SU(9,q)$ \\ 
$18$ & $q^9-1$ & $q^6+q^3+1$ & $SL(9,q)$ \\
$14$ & $q^7+1$ & $q^6\! - \! q^5\! + \! q^4\! - \! q^3\! + \! q^2\! - \! q\! + \! 1$ & $SU(9,q)$ \\
$14$ & $q^7-1$ & $q^6\! + \! q^5\! + \! q^4\! + \! q^3\! + \! q^2\! + \! q\! + \! 1$ & $SL(9,q)$ \\
$12$ & $q^6+1$ & $(q^4-q^2+1)^2$ & $SU(3,q^2)\circ SU(3,q^2)$ \\
$12$ & $q^3+1$ & $(q^2-q+1)^4(5,p)$ & ${^3}D_4(q) \circ {^3}D_4(q)$ \\
$12$ & $q^3-1$ & $(q^2+q+1)^4(5,p)$ & ${^3}D_4(q) \circ {^3}D_4(q)$\\
$8$ & $q^4+1$ & $(q^4+1)^2$ & $SU(3,q^4)$ \\
$8$ & $q^2+1$ &  $(q^2+1)^4(5,p)$& $SU(5,q^2)$\\
$2$ & $q+1$ & $(7,p)(5,p)^2(q+1)^8$ & $SU(5,q)\!\circ \! SU(5,q)$ or $SU(9,q)$\\ % Even the extra 5 and 7 with be contained in $(q+1) \wr S_8$
$2$ & $q-1$ & x &\\ 
\hline
\end{tabular} 
\end{center}
\end{table}
 \end{proof}

\begin{lemma} 
  If $G_0 \cong G_2(q)$, ${^3}D_4(q)$, or $^2F_4(2^{a})$, then $(x,G)$ cannot be a minimal counterexample.
\end{lemma}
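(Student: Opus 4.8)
Continuing the analysis of the minimal counterexample $(x,G)$, the plan is to mirror the argument of Lemma~\ref{lem:El}. By Lemmas~\ref{lem:outinvs},~\ref{lem:nosc} and~\ref{lem:liftinvs} I may assume that $x \in \Inndiag(G_0)$ has order exactly $p$, and by Lemma~\ref{unipotents} that $x$ is semisimple; since ${}^2F_4(2^a)$ has characteristic $2$ this last reduction is automatic there. For $G_0 \cong G_2(q)$ (so $q \ge 3$, the case $q=2$ giving $G_2(2)\cong \PSU(3,3){:}2$, covered by the unitary case) write $e$ for the multiplicative order of $q$ modulo $p$. As $|G_2(q)| = q^6\,\Phi_1^2\Phi_2^2\Phi_3\Phi_6$ we have $e \in \{1,2,3,6\}$, and comparing $p$-parts shows that a Sylow $p$-subgroup lies in a subsystem subgroup of type $\SL(3,q)$ when $e \in \{1,3\}$ and of type $\SU(3,q)$ when $e \in \{2,6\}$. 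Hence a conjugate of $x$ lies in a nonsolvable subgroup of type $\SL^{\epsilon}(3,q)$, and minimality (via the classical case already treated) shows $(x,G)$ is not a counterexample.

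For $G_0 \cong {}^3D_4(q)$ I use $|{}^3D_4(q)| = q^{12}\,\Phi_1^2\Phi_2^2\Phi_3^2\Phi_6^2\Phi_{12}$, so $e \in \{1,2,3,6,12\}$. When $e \in \{1,2\}$ the $p$-part of $|{}^3D_4(q)|$ equals that of the subsystem subgroup $G_2(q)$, so a conjugate of $x$ lies in $G_2(q)$ and we reduce to the case just settled. For $e \in \{3,6,12\}$ I split according to whether $x$ lies in a parabolic subgroup. If it does (possible only when $e \in \{3,6\}$, since $\Phi_{12}$ divides no Levi order), then by Lemma~\ref{lem:2.2}(b) a conjugate of $x$ lies in a Levi subgroup $J$ centralized by no component of $J$; as $\Phi_e$ divides the order of no $\SL(2,q)$ Levi, the relevant component is of type $\SL(2,q^3)$, and a diagonalizing torus reduces this (exactly as for ${}^3D_4(q)$ in Lemma~\ref{unipotents}) to the even-index $\SL(2,q^3)$ case of Lemma~\ref{lem:psl2}. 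If $x$ lies in no parabolic, then $C_G(x)$ is an anisotropic maximal torus $T$ of order $\Phi_e^2$ (for $e \in \{3,6\}$) or $\Phi_{12}$, $x$ is regular, and by Kleidman's classification of the maximal subgroups of ${}^3D_4(q)$ the only maximal subgroup containing $x$ is the solvable normalizer $N_G(T)$ (subfield subgroups ${}^3D_4(q_0)$ aside, which are almost simple and reduce by minimality); I then choose an involution $y \notin N_G(T)$, forcing $\langle x,y\rangle = G$.

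For $G_0 \cong {}^2F_4(q)$ with $q = 2^a \ge 8$ I use $|{}^2F_4(q)| = q^{12}(q^6+1)(q^4-1)(q^3+1)(q-1)$ together with Malle's maximal subgroups \cite{2F4}. Comparing $p$-parts again, a Sylow $p$-subgroup lies in a nonsolvable subsystem subgroup in each of the split cases: in $\Sp(4,q)$ when $p \mid q-1$ or $p \mid q+1$, in ${}^2B_2(q)\wr 2$ when $p \mid q^2+1$ (so a conjugate of $x$ lies in a simple factor ${}^2B_2(q)$, simple as $q \ge 8$), and in $\SU(3,q)$ when $p \mid q^2-q+1$; each reduces by minimality. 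The only remaining primes divide $\Phi_{12} = q^4-q^2+1$, whose factorization over $\mathbb{Z}[\sqrt{2q}]$ gives the two ``Ree'' torus orders $q^2 \pm q\sqrt{2q} + q \pm \sqrt{2q} + 1$; here $x$ is regular, $C_G(x)$ is the corresponding cyclic Coxeter torus $T$, its normalizer $N_G(T) = T.12$ is solvable and is the unique maximal subgroup containing $x$, and we finish by choosing an involution $y \notin N_G(T)$.

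The main obstacle is precisely these anisotropic-torus cases --- $e \in \{3,6,12\}$ for ${}^3D_4(q)$ and the $\Phi_{12}$ primes for ${}^2F_4(q)$ --- in which $C_G(x)$ is solvable, so there is no nonsolvable overgroup of $x$ into which to reduce. The work there is to use the explicit maximal subgroup classifications to prove that, for such $x$, the solvable torus normalizer $N_G(T)$ is the only maximal subgroup of $G$ containing $x$ (the subfield subgroups being handled separately by minimality); granting this, any involution $y \notin N_G(T)$ --- which exists because $G$ is generated by involutions while $N_G(T)$ is proper --- satisfies $\langle x,y\rangle = G$, and a count of $i_2(G)$ against $i_2(N_G(T))$ along the lines of the field-automorphism argument of Lemma~\ref{lem:psl2} provides a robust fallback should the uniqueness fail for small parameters.
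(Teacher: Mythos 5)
Your overall strategy coincides with the paper's: reduce to semisimple inner elements, compare $p$-parts of $|G_0|$ with those of candidate subgroups to trap a Sylow $p$-subgroup in a nonsolvable proper subgroup, and in the $\Phi_{12}$-torus cases use the maximal subgroup classifications of \cite{3D4} and \cite{2F4} to see that $x$ lies in a unique torus normalizer and then pick an involution outside it. Your treatment of $G_2(q)$ (the paper instead quotes \cite[p.~546]{GS} directly), of ${}^2F_4(q)$, and of the $p \mid q^4-q^2+1$ case of ${}^3D_4(q)$ is sound and matches the paper in substance.

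There is, however, a genuine gap in your ${}^3D_4(q)$ argument for $e \in \{3,6\}$ in the subcase where $x$ lies in no parabolic. Your claim that ``the only maximal subgroup containing $x$ is the solvable normalizer $N_G(T)$'' with $|T| = \Phi_e^2$ is false: the torus of order $(q^2+\epsilon q+1)^2$ sits inside the maximal subgroup of type $(q^2+\epsilon q+1)\circ \SL^{\epsilon}(3,q)$, which is nonsolvable, so $x$ always has at least one maximal overgroup other than $N_G(T)$ and the conclusion $\langle x,y\rangle = G$ for any involution $y \notin N_G(T)$ does not follow. (There is also an unjustified step just before: a $p$-element outside all parabolics need not have centralizer of order exactly $\Phi_e^2$; for $e=6$, for instance, the maximal torus of order $(q^3+1)(q+1)$ is not contained in any Levi subgroup and has order divisible by $\Phi_6$.) Your proposed fallback of counting $i_2(G)$ against $i_2(N_G(T))$ does not rescue this, since the offending overgroup $(q^2+\epsilon q+1)\circ \SL^{\epsilon}(3,q)$ is large. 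The repair is exactly the paper's route, and is available to you: since the $p$-part of $|{}^3D_4(q)|$ for $e\in\{3,6\}$ equals that of $(q^2+\epsilon q+1)\circ \SL^{\epsilon}(3,q)$, a conjugate of $x$ lies in that subgroup; if $x$ does not centralize the $\SL^{\epsilon}(3,q)$ factor, minimality applies, and if it does, then $x$ centralizes unipotent elements, hence lies in a parabolic after all, and your Levi reduction via Lemma \ref{lem:2.2} and the $\SL(2,q^3)$ case of Lemma \ref{lem:psl2} (together with a MAGMA check for $q\le 3$, as in Lemma \ref{unipotents}) finishes. With that substitution your proof is complete.
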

\begin{proof} 
The proof is similar to that of Lemma \ref{lem:El}.
We may assume that $x$ is semisimple by Lemmas \ref{unipotents} and \ref{lem:outinvs}. Also, since $G_2(2)' \cong PSU(3,3)$, we can eliminate this case. If $G_0 \cong G_2(q)$, then $x$ normalizes but does not centralize a subgroup of type $SL^{\epsilon}(3,q)$ (see \cite[p. 546]{GS}). So  $G_0 \not\cong G_2(q)$. If $G_0$ is  ${^3}D_4(q)$, or ${^2}F_4(2^{2a+1})$ then we list the possible expressions in $q$ that could be divisible by $p$ in Tables \ref{tab:Syl3D4} and \ref{tab:Syl2F4}. Since $p\ge 5$, $p$ divides precisely one of these expressions. In most cases, we can  deduce that $(x,G)$ cannot be a minimal counterexample. If $G_0 \cong {^2}F_4(2^{a})$, then we may therefore assume that $p | q^4-q^2+1$. In this case, either $p | q^2+\sqrt{2q^3} +q + \sqrt{2q}+ 1$ or 
$p | q^2-\sqrt{2q^3} +q -\sqrt{2q}+ 1$, and from the list of maximal subgroups of $G$ (see \cite{2F4}), we may assume that $x$ is only contained in a (single) torus $T$ of order $q^2+\sqrt{2q^3} +q + \sqrt{2q}+ 1$ or $q^2-\sqrt{2q^3} +q -\sqrt{2q}+ 1$. Thus we can pick an involution $y \in G$ that is not contained in the normalizer of $T$ and $\langle x,y\rangle$ will not be solvable. 

Suppose that $G_0 \cong {^3}D_4(q)$. We note that if $p | q^2 - q+1$, then $x$ is contained in a subgroup of type $(q^2-q+1) \circ SU(3,q)$.  If $x$ does not centralize the $SU(3,q)$, then $(x,G)$ cannot be a minimal counterexample. But if $x$ does centralize the $SU(3,q)$ subgroup, then $x$ centralizes unipotent elements and is therefore contained in a parabolic subgroup. By Lemma \ref{lem:2.2} we may assume that $x$ is noncentral in the Levi subgroup, which is of type $SL(2,q)$ or $SL(2,q^3)$. But if $q > 3$, then $x$ cannot be a minimal counterexample since these Levi components are normalized by a split torus, which induces diagonal automorphisms,  and we can therefore reduce to the case $SL(2,q) \le G \le GL(2,q)$ where $SL(2,q)$ has even index in $G$ when $q$ is odd. We can verify the cases $q=2$ and $q=3$ in MAGMA. % or character table
The case where $p | q^2+q+1$ is the same argument.  The only remaining case is where $p | q^4-q^2+1$.  In this case, the list of maximal subgroups in \cite{3D4} allows us to assume that $x$ is only contained a (single) torus $T$ of order $(q^4-q^2+1)$. We can then choose an involution $y$ that  is not contained in the normalizer of $T$. \qedhere 

\begin{table}[htdp]
\begin{center}
\caption{ Subgroups of ${^2}F_4(q)$ containing a Sylow $p$-subgroup, $q=2^a$\label{tab:Syl2F4}}
\begin{tabular}{ccc}
\hline 
$p$ divides & $p$-part of $G$ & Subgroup type containing a  \\
   & is $p$-part of & Sylow $p$ subgroup \\
 \hline
%$6$ & $q^2+\sqrt{2q^3} +q + \sqrt{2q}+ 1$ &  $q^2+\sqrt{2q^3} +q + \sqrt{2q}+ 1$ & ?? \\ % The maximal subgroups are given by Malle and it is clear that x is only contained  in one torus.
%$6$ & $q^2-\sqrt{2q^3} +q -\sqrt{2q}+ 1$ &  $q^2-\sqrt{2q^3} +q -\sqrt{2q}+ 1$ & ?? \\
 $q^4-q^2+1$& $q^4-q^2+1$ & see Lemma\\
$q^2+1$ & $(q^2+1)^2$ & ${^2}B_2(q) \!\circ\! {^2}B_2(q)$\\
$q+1$ & $(q+1)^2$ & $SU(3,q)$ \\
$q^2-q+1$ & $q^2-q+1$ & $SU(3,q)$  \\
 $q-1$ & $(q-1)^2$   & $Sp(4,q)$  \\
\hline
\end{tabular} 
\end{center}
\end{table}

\begin{table}[htdp]
\begin{center}
\caption{ Subgroups of ${^3}D_4(q)$ containing a Sylow $p$-subgroup\label{tab:Syl3D4}}
\begin{tabular}{ccc}
\hline % mutually exclusive possibilities for dividing $p$
 $p$ divides  & $p$-part of $G$ & Subgroup type containing a  \\
   & is $p$-part of & Sylow $p$ subgroup \\
 \hline
 $q^4-q^2+1$  & $q^4-q^2+1$ & see Lemma\\ % x only contained in T5 torus
 $q^2-q+1$ & $(q^2-q+1)^2$& $(q^2-q+1)\circ SU(3,q)$  \\% either x is non central in SU3 or its centralizer contains a unipotent so x is in a parabolic subgroup.
 $q^2+q+1$ & $(q^2+q+1)^2$ & $(q^2+q+1) \circ SL(3,q)$ \\
 $q+1$ & $(q+1)^2$ & $G_2(q)$\\
$q-1$ & $(q-1)^2$ & $G_2(q)$ \\
\hline
\end{tabular} 
\end{center}
\end{table}
\end{proof}
%\subsection{$F_4(q)$}
%
%
%
% If $x$ is contained in a parabolic subgroup
%then we may apply Lemma \ref{lem:2.2} and assume that $x$ acts
%noncentrally on each component of the Levi complement of some maximal
%parabolic subgroup $P$. If $P$ is an end node parabolic subgroup,
%then $(x,G)$ cannot be a minimal counterexample since the Levi complements are nonsolvable. If $P$ is not an end node
%parabolic subgroup, then $P$ is of type $A_1(q) A_2(q)$ and $(x,G)$ cannot be a minimal counterexample. If $x$
%does not lie in a parabolic subgroup, then Table \ref{count} and
%Lemma \ref{lem:countinv} show that $(x,G)$ cannot be a minimal
%counterexample.

\begin{lemma} 
 If $G_0 \cong {^2}G_2^{\prime}(3^a)$, then $(x,G)$ cannot be a minimal counterexample. 
\end{lemma}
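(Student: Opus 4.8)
The plan is to mimic the structure of the preceding lemmas, reducing to an inner semisimple element and then splitting into cases according to which maximal torus contains $x$. By Lemmas \ref{lem:outinvs} and \ref{unipotents} we may assume $x$ is semisimple; since ${^2}G_2(q)$ with $q=3^a$ has no diagonal automorphisms and its field automorphisms of prime order were already disposed of in Lemma \ref{lem:outinvs} (whose proof treats ${^2}G_2(3^a)$ explicitly), we may take $x\in G_0={^2}G_2(q)$. The group ${^2}G_2(3)'\cong \PSL(2,8)$ was handled in Lemma \ref{lem:psl2}, so I would assume $q=3^{2m+1}\ge 27$. Writing $|G_0|=q^3(q-1)(q+1)(q+\sqrt{3q}+1)(q-\sqrt{3q}+1)$ with $\sqrt{3q}=3^{m+1}$, and recalling that the four cyclic maximal tori have orders $q-1$, $q+1$ and $q\pm\sqrt{3q}+1$, one checks that these factors are pairwise coprime away from $\{2,3\}$, so the prime $p\ge5$ divides exactly one of them.

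First I would treat the case $p\mid q^2-1$, that is $p\mid q-1$ or $p\mid q+1$. The centralizer of the central involution $t$ of $G_0$ is $H=\langle t\rangle\times \PSL(2,q)\cong 2\times\PSL(2,q)$, and the factor $\PSL(2,q)$ contains cyclic tori of orders $(q-1)/2$ and $(q+1)/2$. Since $p$ is odd, $p\mid q-1$ forces $p\mid(q-1)/2$ and $p\mid q+1$ forces $p\mid(q+1)/2$, so in either case a conjugate of $x$ lies in this $\PSL(2,q)$ with order $p$ there. As $\PSL(2,q)<G_0\le G$ is strictly smaller, minimality of $(x,G)$ together with Lemma \ref{lem:psl2} yields an involution $y\in\PSL(2,q)$ with $\langle x,y\rangle$ nonsolvable, so $(x,G)$ is not a minimal counterexample here.

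The remaining case is $p\mid q^2-q+1=(q+\sqrt{3q}+1)(q-\sqrt{3q}+1)$, and this is the heart of the argument. Here $x$ lies in a maximal torus $T$ of odd order $q\pm\sqrt{3q}+1$, and since $p\ge5$ the multiplicative order of $q$ modulo $p$ is $6$, so $x$ is regular semisimple with $C_{G_0}(x)=T$. I would then invoke Kleidman's classification of the maximal subgroups of ${^2}G_2(q)$ and check by order divisibility that $p$ divides the order of no maximal subgroup other than the torus normalizers $N_{G_0}(T)=T{:}C_6$ and the subfield subgroups ${^2}G_2(q_0)$: indeed $p\nmid q^3(q-1)$, $p\nmid q(q^2-1)$, and $p\nmid |(2^2\times D_{(q+1)/2}){:}3|$. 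If $x$ lies in a subfield subgroup, that subgroup is nonsolvable and smaller than $G_0$, so minimality supplies the required involution; otherwise $C_{G_0}(x)=T$ forces $x$ into the single (solvable) maximal subgroup $N_{G_0}(T)$. Since $G_0$ contains $q^2(q^2-q+1)$ involutions, far more than $|N_{G_0}(T)|\le 6(q+\sqrt{3q}+1)$, I can choose an involution $y\notin N_{G_0}(T)$; then $\langle x,y\rangle$ is contained in no maximal subgroup through $x$, hence $\langle x,y\rangle=G_0$ is nonsolvable.

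The main obstacle is precisely this last case: it rests on the detailed maximal subgroup structure of ${^2}G_2(q)$ to confine a regular semisimple element of order dividing $q^2-q+1$ to a unique solvable torus normalizer (after removing subfield subgroups by induction), together with the centralizer computation $C_{G_0}(x)=T$. Once those facts are established, both the case $p\mid q^2-1$ (reduction to $\PSL(2,q)$) and the involution count in the exotic-torus case are routine.
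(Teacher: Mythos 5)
Your proposal is correct and follows essentially the same route as the paper: reduce to an inner semisimple element, split according to whether $p\mid q^2-1$ (where both arguments push $x$ into the maximal subgroup $2\times \PSL(2,q)$ and invoke minimality) or $p\mid q^2-q+1$ (where both confine $x$, via $C_{G_0}(x)=T$ and Kleidman's list of maximal subgroups, to a single torus normalizer $T{:}\mathbb{Z}_6$ and then pick an involution outside it). The only cosmetic difference is that you make the subfield-subgroup exclusion and the involution count explicit, which the paper leaves to minimality and a one-line assertion.
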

\begin{proof} 
We may assume that $a \ne  1$ since ${^2}G_2^{\prime}(3) \cong L(2,8)$. Since $q=3^a$ and by Lemma \ref{lem:outinvs}, we may assume that $x$ is semisimple. Now $|G_0|=q^3(q^3+1)(q-1)$ and the
maximal subgroups are given in \cite{G2}.  Since $p \nmid q$ there
are three mutually exclusive possibilities: $p \mid (q^2-1)$, $p
\mid q - \sqrt{3q}+1$, and $p \mid q + \sqrt{3q}+1$. First suppose
that $p \mid q^2-1$. Then a Sylow $p$-subgroup lies inside a
maximal subgroup $2 \times L(2,q)$, so $(x,G)$ cannot be a minimal counterexample in this case. 

 If $p \mid q^2-q+1$, then a Sylow $p$-subgroup is contained in
one of the abelian Hall subgroups of order $q \pm \sqrt{3q}+1$, so
we may assume that $x$ lies in one of these Hall subgroups and
that $|C_G(x)| = q \pm \sqrt{3q}+1$ (see part (4) of the main
theorem in \cite{Ward}). If $(x,G)$ is a minimal counterexample,
then $x$ can only be contained in a single subgroup $H$, which is either of type 
$\mathbb{Z}_{q + \sqrt{3q}+1}:\mathbb{Z}_{6}$ or $\mathbb{Z}_{q -
\sqrt{3q}+1}:\mathbb{Z}_{6}$. We choose an involution $y \in G$ that is not contained in $H$ and then $\langle x,y\rangle$ is not solvable.
\end{proof}

For the Suzuki groups, we will use a counting argument.
\begin{lem} \label{lem:countinv} Let $G$ be an almost simple group with socle $G_0$. Suppose that $X_1, \ldots X_k$ are representatives
  for the conjugacy classes of maximal subgroups of $G$ that contain $x$ and that do not contain $G_0$. Let
  $Y$ be the set of involutions in $G_0$. If
\begin{align} \label{eqn:countinv}
 |Y|> \sum_{i=1}^{k} \frac{|x^G \cap X_i||G:X_i||Y\cap X_i|}{|x^G|},
\end{align}
then there exists an involution $y$ in $G_0$ such that $\langle
  x,y \rangle$ contains $G_0$.
\end{lem}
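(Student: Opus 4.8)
The plan is to run a standard counting argument: I would bound the number of involutions $y \in G_0$ for which $\langle x,y\rangle$ fails to contain $G_0$, and show this bound is exactly the right-hand side of \eqref{eqn:countinv}, so that when that quantity is smaller than $|Y|$ a ``good'' involution must exist. Call $y \in Y$ \emph{bad} if $G_0 \not\le \langle x,y\rangle$, and \emph{good} otherwise. Before counting I would reduce to the case $G = \langle G_0,x\rangle$, so that $G/G_0$ is generated by the image of $x$; this is harmless, since $\langle x,y\rangle \le \langle G_0,x\rangle$ for every $y \in G_0$, and in the Suzuki application it simply means one takes $G = G_0$ when $x$ is semisimple in $G_0$, or $G = G_0\langle x\rangle$ with $\bar x$ a generator when $x$ is a field automorphism.

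The key step is the claim that each bad $y$ lies, together with $x$, in a maximal subgroup $M$ of $G$ with $G_0 \not\le M$. Indeed, $\langle x,y\rangle$ is a proper subgroup of $G$ (it omits $G_0$) and $\langle x,y\rangle\, G_0 = G$, because $x \in \langle x,y\rangle$ and $G = \langle G_0,x\rangle$. Choose $M$ maximal among the subgroups that contain $\langle x,y\rangle$ but do not contain $G_0$; such $M$ exist since the collection is finite and nonempty. If $M \lneq L \le G$, then maximality forces $G_0 \le L$, and since also $x \in M \le L$ we get $L \supseteq \langle G_0,x\rangle = G$, so $L = G$. Hence $M$ is a maximal subgroup of $G$ containing $x$ but not $G_0$, so $M$ is $G$-conjugate to exactly one of $X_1,\dots,X_k$, and of course $y \in Y \cap M$.

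Next I would count, for each $i$, the number of $G$-conjugates of $X_i$ that contain $x$. As $X_i$ is maximal and omits the unique minimal normal subgroup $G_0$, it is self-normalizing, so it has $|G:X_i|$ conjugates; double-counting the pairs $(z,M)$ with $M$ a conjugate of $X_i$ and $z \in x^G \cap M$, and using that every element of $x^G$ lies in the same number of conjugates of $X_i$, gives the number of conjugates of $X_i$ containing $x$ as
\[ n_i = \frac{|x^G \cap X_i|\,|G:X_i|}{|x^G|}. \]
Since $Y$ is invariant under $G$-conjugation, $|Y \cap X_i^g| = |Y \cap X_i|$, so each conjugate $M$ of $X_i$ containing $x$ contributes at most $|Y \cap X_i|$ bad involutions. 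Combining this with the containment from the previous paragraph, the number of bad involutions is at most
\[ \sum_{i=1}^{k} n_i\,|Y \cap X_i| = \sum_{i=1}^{k}\frac{|x^G \cap X_i|\,|G:X_i|\,|Y \cap X_i|}{|x^G|}, \]
which is precisely the right-hand side of \eqref{eqn:countinv}. Thus if $|Y|$ exceeds this sum, some involution of $G_0$ is good, as required.

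The main obstacle is the covering step, not the arithmetic: one must be certain that a bad $y$ really does lie with $x$ inside a maximal subgroup of $G$ avoiding $G_0$. This is exactly where the reduction $G = \langle G_0,x\rangle$ (equivalently, $\bar x$ generates $G/G_0$) is essential, since without it $\langle x,y\rangle$ could be trapped below the proper subgroup $\langle x\rangle G_0$ and all of its maximal overgroups in $G$ might contain $G_0$, breaking the count. Once that reduction is in place the argument is purely combinatorial, and the only care needed is to check the reduction is consistent with how the inequality is later applied to the Suzuki groups.
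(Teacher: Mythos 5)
Your argument is essentially identical to the paper's: the paper also bounds the number of ``bad'' involutions by $\sum_i n_i |Y\cap X_i|$ with $n_i = |x^G\cap X_i|\,|G:X_i|/|x^G|$ (phrased as a proof by contradiction rather than a direct count). The one place you go beyond the paper is in justifying the covering step --- the paper silently assumes every bad $y$ lies with $x$ in a conjugate of some $X_i$, whereas you correctly observe this needs $G=\langle G_0,x\rangle$ (which holds in the paper's applications); this is a legitimate point of care, not a different method.
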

\begin{proof}
  Suppose that $\langle x,y\rangle$ does not contain $G_0$ for all $y \in Y$. For each $i$, let $X_{i1}, \ldots X_{in_i}$ be the conjugates of
  $X_i$ that contain $x$. In particular, $n_i$ is the number of
  conjugates of $X_i$ that contain $x$. Thus we have
  % The involutions in $X_{ij}$ and $X_{ij^{\prime}}$ are in 1-1 correspondence, so
 \begin{align*}
      |Y \cap \bigcup_{i,j} X_{ij}| \le & \sum_{i=1}^k n_i|Y\cap X_i|.
  \end{align*}
  But
  \begin{displaymath}
    \frac{n_i}{|G:X_{ij}|}= \frac{|x^G \cap X_i|}{|x^G|}, % See eqn (4) of spread paper
  \end{displaymath}
  and thus
  \begin{displaymath}
   |Y \cap \bigcup_{i,j} X_{ij}| \le \sum_{i=1}^k \frac{|x^G \cap X_i||G:X_{i}||Y\cap X_i|}{|x^G|}. 
  \end{displaymath}
  This is a contradiction, since we assumed that $Y = Y \cap \bigcup_{i,j}X_{ij}$.
\end{proof}

\begin{lemma} 
 If $G_0 \cong {^2}B_2(2^a)$ then $(x,G)$ cannot be a minimal counterexample. 
\end{lemma}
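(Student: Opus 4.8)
The plan is to show that $x$ lies in a pair $(x,y)$ with $y$ an involution of $G_0$ and $\langle x,y\rangle\supseteq G_0$; since $G_0$ is nonsolvable this contradicts minimality. Write $q=2^a$ (with $a$ odd and $q\ge 8$) and $r=\sqrt{2q}$, so that $q^2+1=(q-r+1)(q+r+1)$. I will use the standard facts that every involution of $G_0$ is conjugate and lies in the centre of a Sylow $2$-subgroup, whence $i_2(G_0)=(q-1)(q^2+1)$, and that the maximal subgroups of $G_0$ are the Borel subgroup $B=q^2{:}(q-1)$, the dihedral group $D=D_{2(q-1)}$, the Frobenius groups $(q\pm r+1){:}4$, and the subfield subgroups ${^2}B_2(q_0)$ with $q=q_0^b$, $b$ prime. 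Since $p\ge5$ is odd and $q$ is even, $x$ is not unipotent: either $x$ is semisimple in $G_0$, or (because Lemma \ref{lem:outinvs} explicitly leaves this socle open) $x$ is a field automorphism, as $\mathrm{Out}(G_0)$ is cyclic of order $a$. In either case, by minimality we may assume that $x$ is not contained in any \emph{proper} subfield subgroup ${^2}B_2(q_0)$, for otherwise induction applied to ${^2}B_2(q_0)$ supplies the required involution already inside $G_0$.

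First suppose $x$ is semisimple. Then $C_{G_0}(x)$ is one of the cyclic maximal tori of order $q-1$, $q-r+1$, or $q+r+1$, and $x$ lies in a unique conjugate of the corresponding torus. If $p\mid q\pm r+1$, then $x$ is contained in a single Frobenius subgroup $(q\pm r+1){:}4$, which has at most $q\pm r+1$ involutions. If $p\mid q-1$, then $x$ lies in exactly two Borel subgroups (the two point stabilizers fixed by the hyperbolic torus $\langle x\rangle$) and in exactly one dihedral subgroup, each of which contains precisely $q-1$ involutions (the nontrivial central elements of a Sylow $2$-subgroup, respectively the reflections). Substituting these data into Lemma \ref{lem:countinv}, the right-hand side of (\ref{eqn:countinv}) is at most $2(q-1)+(q-1)=3(q-1)$, which is far smaller than $i_2(G_0)=(q-1)(q^2+1)$. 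Hence there is an involution $y\in G_0$ with $\langle x,y\rangle\supseteq G_0$, and $(x,G)$ is not a counterexample.

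Now suppose $x$ is a field automorphism of order $p$, so $p\mid a$; write $q=q_0^p$, $r_0=\sqrt{2q_0}$, and $C_{G_0}(x)={^2}B_2(q_0)$ of order $N=q_0^2(q_0^2+1)(q_0-1)$. Exactly as in the field-automorphism part of Lemma \ref{lem:psl2}, I will bound $\Gamma=\{y\in G_0: y^2=1,\ \langle x,y\rangle\not\supseteq G_0\}$. If $y\in\Gamma$, then $\langle x,y\rangle\cap G_0$ is an $x$-invariant proper subgroup, hence lies in a maximal subgroup of Borel, dihedral, or $(q\pm r+1){:}4$ type, or equals $C_{G_0}(x)={^2}B_2(q_0)$ (all other subfield subgroups being excluded by minimality). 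As in the proof of \cite[Lemma 3.1]{GS}, the $x$-stable $G$-conjugates of each such $H$ form a single $C_{G_0}(x)$-orbit, so the number of conjugates of $H$ that can contain $\langle x,y\rangle\cap G_0$ is at most $|C_{G_0}(x)|/|C_H(x)|$, where $C_H(x)$ is the corresponding subgroup of ${^2}B_2(q_0)$ (namely $q_0^2{:}(q_0-1)$, $D_{2(q_0-1)}$, or $(q_0\pm r_0+1){:}4$). Counting the involutions in these conjugates and adding the crude bound $N$ for the self-normalizing term $C_{G_0}(x)$ gives
\begin{displaymath}
|\Gamma|\le \frac{(q-1)N}{q_0^2(q_0-1)}+\frac{(q-1)N}{2(q_0-1)}+\frac{(q+r+1)N}{4(q_0+r_0+1)}+\frac{(q-r+1)N}{4(q_0-r_0+1)}+N.
\end{displaymath}
Each summand is a polynomial in $q_0$ of degree at most $p+4$, whereas $i_2(G_0)=(q^2+1)(q-1)$ has degree $3p$ in $q_0$; since $p\ge5$ we have $3p>p+4$, so $|\Gamma|<i_2(G_0)$. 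Hence some involution $y\in G_0$ satisfies $\langle x,y\rangle\supseteq G_0$, completing the field-automorphism case.

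The step I expect to be the main obstacle is this field-automorphism count: identifying the fixed-point subgroups $C_H(x)\le{^2}B_2(q_0)$ for each type of $x$-invariant maximal subgroup, and checking carefully that the $x$-stable conjugates of each type really do constitute a single $C_{G_0}(x)$-orbit (the dihedral type requiring the characteristic-torus argument of Lemma \ref{lem:psl2}). I also expect to treat separately the degenerate value $q_0=2$, where ${^2}B_2(2)$ is solvable and the torus $q_0-r_0+1=1$ collapses; this and any remaining small $q$ can be confirmed directly in MAGMA, as elsewhere in the paper.
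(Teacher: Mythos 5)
Your proposal follows essentially the same strategy as the paper: for semisimple $x$ you count the involutions lying in the (few) maximal overgroups of $x$ and compare with $i_2(G_0)=(q-1)(q^2+1)$ via Lemma \ref{lem:countinv}, and for field automorphisms you run the same $\Gamma$-bound used for $PSL(2,q)$, with the $x$-stable conjugates of each maximal subgroup forming a single $C_{G_0}(x)$-orbit. Your semisimple count is in fact a little cleaner than the paper's (you pin down that $x$ lies in exactly two Borel subgroups, one dihedral subgroup, resp.\ one Frobenius normalizer, rather than invoking the averaging formula), and your degree-count heuristic for the field-automorphism inequality is sound for $p\ge 5$ once the small values of $q_0$ are checked.

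There is one concrete flaw in the justification: the claim that ``by minimality we may assume that $x$ is not contained in any proper subfield subgroup ${}^2B_2(q_0)$'' fails for $q_0=2$, because ${}^2B_2(2)\cong F_{20}$ is \emph{solvable}, so the inductive hypothesis gives you nothing there. Since $|{}^2B_2(2)|=20$ and $5\mid q^2+1$ for every odd $a$, an element of order $p=5$ (necessarily lying in a torus of order $q\pm\sqrt{2q}+1$) genuinely can sit inside maximal ${}^2B_2(2)$ subgroups when $a$ is prime, and the involutions $y$ with $\langle x,y\rangle$ inside such a subgroup must be added to your list of bad involutions. The paper includes ${}^2B_2(2)$ explicitly in each of its three semisimple subcases for exactly this reason. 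The omission is harmless quantitatively --- the number of conjugates of ${}^2B_2(2)$ containing $x$ is about $(q\pm\sqrt{2q}+1)/5$, contributing at most roughly $q\pm\sqrt{2q}+1$ further involutions, which does not threaten the inequality --- but the step as you justified it is incorrect and needs this extra term.
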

\begin{proof} 
There are $(q^2+1)(q-1)$ involutions in $G_0={^2}B_2(q)$, and the maximal subgroups of $G_0$ are given in \cite{2B2}. If $x$
is inner-diagonal, then since $p$ is odd, there are three mutually exclusive 
possibilities: $p \mid q-1$, $p \mid q+\sqrt{2q}+1$, and $p \mid
q-\sqrt{2q}+1$. We will show that $(x,G)$ cannot be a minimal
counterexample using Lemma \ref{lem:countinv}. If $p \mid q-1$, then
the maximal subgroups that could contain $x$ are  Frobenius groups
of order $q^2(q-1)$,  dihedral groups of order $2(q-1)$ and
${^2}B_2(2)$. If $X_i$ is a Frobenius group, then $|Y \cap
X_i|=q-1$, by \cite[Theorem 2]{2B2}, and
\begin{displaymath}
  \frac{|x^G \cap X_i||G:X_i||Y\cap X_i|}{|x^G|} \le
  \frac{(q^3-q^2-q)(q^2+1)(q-1)}{q^2(q^2+1)} \le q^2-q-1.
\end{displaymath}
It follows that
\begin{displaymath}
  \sum_{i=1}^{k} \frac{|x^G \cap X_i||G:X_i||Y\cap X_i|}{|x^G|} \le
  (q^2-q-1)+\frac{(q-1)^2}{2} + (q-1),
\end{displaymath}
which is less than the number of involutions $(q^2+1)(q-1)$ in $G_0$ for $q \ge 8$. If $p \mid q+\sqrt{2q}+1$, then $x$ could be contained in a group
$\mathbb{Z}_{(q+\sqrt{2q}+1)}:[4]$ or ${^2}B_2(2)$, thus
\begin{displaymath}
  \sum_{i=1}^{k} \frac{|x^G \cap X_i||G:X_i||Y\cap X_i|}{|x^G|} \le
  (q+\sqrt{2q}+1)^2   + (q+\sqrt{2q}+1)
\end{displaymath}
and this is less than  $(q^2+1)(q-1)$ for $q \ge 8$. Similarly, if $p \mid q-\sqrt{2q}+1$, then $x$ could be contained
in a group $\mathbb{Z}_{(q-\sqrt{2q}+1)}:[4]$ or ${^2}B_2(2)$, so
\begin{displaymath}
  \sum_{i=1}^{k} \frac{|x^G \cap X_i||G:X_i||Y\cap X_i|}{|x^G|} \le
  (q-\sqrt{2q}+1)^2+(q-\sqrt{2q}+1).
\end{displaymath}
So $(x,G)$ is not a minimal counterexample when $x$ is
inner-diagonal.

If $x$ is a field automorphism, then we can use
the same counting argument as for the case $G_0=PSL(2,q)$. Indeed,
we would like to show that the right hand side of 
\begin{align} \label{eqn:2b2calc}
  |\Gamma| \le \sum_{i=1}^{k-1}
  \frac{i_2(X_i)|C_{G_0}(x)|}{|C_{X_i}(x)|}+|C_{G_0}(x)|
   \end{align}
is less than the number of involutions $(q^2+1)(q-1)$ in $G_0$. 
The possibilities for the maximal subgroups of $G_0$ containing
$\langle x,y \rangle \cap G_0$ are a Frobenius group of
order $q^2(q-1)$, a dihedral group of order $2(q-1)$,
the normalizer of a cyclic group
$\mathbb{Z}_{(q-\sqrt{2q}+1)}:[4]$, the normalizer of a
cyclic group $\mathbb{Z}_{(q+\sqrt{2q}+1)}:[4]$, and the
centralizer of $x$, ${^2}B_2(q^{1/p})$.  We label these subgroups $X_1$, $X_2$, $X_3$, $X_4$, and $X_5$ respectively.
%As before, we can argue that the $G$-conjugates of $X_i$ fixed by $x$ are $C_G(x)$-conjugate.

% Here $X_2$, $X_3$ and $X_4$ aren't algebraic groups, but they're all normalizers of maximal tori. We can argue as in the PSL2 case: If $x$ normalizes $N_G(T)$ then $x$ must normalize $T$ -- since for these three groups, $T$ is characteristic (and indeed fully invariant) in $N(T)$. Indeed, take $\phi : N(T) \to N(T)$; then the generator $a$ of $T$ must map into $T$, for $\phi(a) \in N(T)/T$ must be trivial since the order of $T$ and the index are coprime:  $T$ is a Hall subgroup.

Therefore,  if $q_0:=q^{1/p}$, then
\begin{align*}
   \sum_{i=1}^{k-1}
   \frac{i_2(X_i)|C_{G_0}(x)|}{|C_{X_i}(x)|}\!+\!|C_{G_0}(x)| \le&
   \frac{(q_0^p-1)q_0^2(q_0^2+1)(q_0-1)}{q_0^2(q_0-1)}\\
& +
   \frac{(q_0^p-1)q_0^2(q_0^2+1)(q_0-1)}{2(q_0-1)} \\
   & +
   \frac{3(q_0^p-\sqrt{2q_0^p}+1)q_0^2(q_0^2+1)(q_0-1)}{4(q_0+\sqrt{2q_0}+1)}\\
   & +\frac{3(q_0^p+\sqrt{2q_0^p}+1)q_0^2(q_0^2+1)(q_0-1)}{4(q_0-\sqrt{2q_0}+1)} \\
   & + q_0^2(q_0^2+1)(q_0^2-1) \\
   \le &  (q_0^p-1)(q_0^2+1) + \frac{(q_0^p-1)q_0^2(q_0^2+1)}{2} \\
   & + \!2(q_0^p\!+\!(2q_0^p)^{ \frac{1}{2}} \! +\!1)q_0^2(q_0\!+\!(2q_0)^{ \frac{1}{2}}\!+\!1)(q_0 \!- \!1) \\
   & + q_0^2(q_0^2+1)(q_0^2-1).
 \end{align*}
Since $p \ge 5$, an elementary calculation shows that (\ref{eqn:2b2calc}) holds;
thus $(x,G)$ cannot be a minimal counterexample.
\end{proof}

\begin{lemma} 
  Let $G_0$ be a sporadic group. Then $(x,G)$ cannot be a minimal counterexample. %WMA $G_0=G$ since p \ge 5.
  \end{lemma}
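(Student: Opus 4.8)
The plan is to reduce to the case $x \in G_0$ and then dispatch the finitely many pairs $(G_0,p)$ by the counting argument of Lemma \ref{lem:countinv}, supplemented by a direct inspection of maximal subgroups in the few tight cases. First I would observe that for every sporadic group $G_0$ the outer automorphism group has order $1$ or $2$, so since $x$ has odd order $p \ge 5$ it must lie in $G_0$; in particular $G = G_0$ or $G_0.2$, and we may seek $y$ among the involutions of $G_0$. It suffices to produce an involution $y \in G_0$ with $\langle x,y \rangle \supseteq G_0$, since $G_0$ is nonabelian simple and hence $\langle x,y\rangle$ is then nonsolvable, contradicting the assumption that $(x,G)$ is a minimal counterexample.

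For each sporadic $G_0$ and each prime $p \ge 5$ dividing $|G_0|$, I would apply Lemma \ref{lem:countinv} with $X_1,\dots,X_k$ representing the classes of maximal subgroups of $G$ of order divisible by $p$ that do not contain $G_0$. The data required---the class sizes $|x^G|$ of elements of order $p$, the involution count $|Y| = i_2(G_0)$, the numbers $|Y \cap X_i| = i_2(X_i)$, the indices $|G:X_i|$, and the fusion numbers $|x^G \cap X_i|$---are all available from the \textsc{Atlas} and from the character table and maximal subgroup libraries in \textsc{Gap} and \textsc{Magma}. Verifying the inequality (\ref{eqn:countinv}) then yields an involution $y$ with $\langle x,y\rangle \supseteq G_0$, as desired.

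The inequality is easy whenever the Sylow $p$-subgroup is cyclic, which holds for all but a handful of small primes. In that situation $x$ lies in very few maximal subgroups---typically only the normalizer of a cyclic $p$-group or a single torus-like local subgroup---so the fusion numbers $|x^G \cap X_i|$ are small and (\ref{eqn:countinv}) holds with room to spare. In these cases one can alternatively argue, exactly as in the exceptional-group lemmas above, that $x$ is contained in a unique maximal subgroup $M$ and simply choose an involution $y \notin M$. The remaining work is concentrated at the small primes $p \in \{5,7,11,13\}$, where several large maximal subgroups may contain $x$ and the inequality must be checked more carefully; here one reads off the relevant quantities directly and confirms (\ref{eqn:countinv}) numerically.

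The hard part will be the largest sporadic groups, above all the Monster and the Baby Monster, where the bookkeeping of maximal subgroups is most delicate. For $p \ge 5$, however, only the maximal subgroups of order divisible by $p$ are relevant, and these are either members of the known list or $p$-local subgroups $N_G(P)$ with $P$ a nontrivial $p$-subgroup; the $p$-local information alone suffices to bound the right-hand side of (\ref{eqn:countinv}) and to conclude. In the heaviest cases a direct computation exhibiting a single pair $(x,y)$ with $\langle x,y\rangle$ nonsolvable completes the verification, and so $(x,G)$ cannot be a minimal counterexample.
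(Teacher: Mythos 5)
Your strategy is sound in outline but takes a genuinely different route from the paper, and the difference matters most exactly where you locate the difficulty. The paper does not use Lemma \ref{lem:countinv} or any maximal subgroup data here at all: it invokes Thompson's criterion that a finite group $H$ is nonsolvable if and only if it contains elements $a,b,c$ of pairwise coprime orders with $abc=1$, and then verifies from the character table library (via class multiplication coefficients) that for every class of elements $x$ of prime order $p\ge 5$ in every sporadic almost simple group there is an involution $y$ with $yx$ of order coprime to $2p$; applying Thompson's criterion to $H=\langle x,y\rangle$ finishes. This needs only the character tables and power maps, which are completely known for all sporadic groups, whereas your approach needs the full list of maximal subgroups together with the fusion numbers $|x^G\cap X_i|$ and involution counts $|Y\cap X_i|$. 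That is a real weakness for the Monster: its maximal subgroups were not completely classified at the time, and your assertion that the unclassified ones of order divisible by $p$ are $p$-local is not correct --- the possible missing maximal subgroups are almost simple and need not be local. They can be discarded, but only by explicitly appealing to minimality of $(x,G)$ (any proper almost simple subgroup containing $x$ already yields the required involution), which you should say rather than fold into the counting. There is also no guarantee that inequality (\ref{eqn:countinv}) actually holds for the small primes $p\in\{5,7\}$ in every sporadic group, where elements can lie in several large maximal subgroups; your fallback of exhibiting an explicit pair $(x,y)$ would then carry the load, at which point the structure-constant computation the paper uses is both cheaper and uniform. So: your argument can be pushed through, but the paper's character-table approach buys independence from maximal subgroup classifications and a single uniform verification, at the cost of relying on Thompson's theorem.
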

 \begin{proof} 
  We can verify the sporadic groups in GAP using the character table library. We use Thompson's result  \cite[Corollary 3]{Thompson} that a group $H$ is nonsolvable if and only if there exists $a,b,c \in H$ of pairwise coprime order such that $abc=1$. Using the character table,  we can check that for any $x$ of prime order $p \ge 5$, there exists an involution $y$ such that $yx$ has order coprime to $2$ and $p$.  
\end{proof}
This completes the proof of Theorem \ref{inv}.

\section{Proof of Theorem 1.6}
%Consider using Glauberman's Z* Theorem here to reduce. i.e $x$ is contained in a Sylow subgroup, one of the involution classes centralizer contains an involution, so we may assume $x$ is this one - o/w $x$ is contained in C(t) and done by induction; now by GZST, since there is nearly always only one central involution in C(t) (see $C_C(L*))$ column, $x$ is G conjuagate to something in $C(t)$ that is not central.

Note that if there is a unique class of 
involutions, then $G$ cannot be a minimal
counterexample since Malle, Saxl and Weigel   \cite{MSW} prove that there exist three involutions in $G_0$ that 
generate $G_0$ unless $G_0 \cong PSU(3,3)$. Also, Guralnick and Saxl \cite{GS} prove that there exist three conjugates of any involution in an almost simple group that generate a subgroup containing the socle when the Lie rank is small. We will appeal to both of these result throughout the proof.

%We will make use of the following corollary of Glauberman's $Z^*$-theorem \cite{Zstarthm}.
%\begin{thm} \label{thm:noncentralinvolution}
%Let $G$ be an almost simple group and $x \in G$ an involution. Then there exists $y \in C_G(x)$ such that $x\ne y$ and $x$ is $G$-conjugate to $y$.
%\end{thm}
%%So we can probably just embed $x$ in the centralizer of an involution that is  2.almost simple and contains a Sylow $2$.

\begin{lemma} 
  Suppose that $G_0 \cong A_n$. Then $(x,G)$ cannot be a minimal counterexample. 
\end{lemma}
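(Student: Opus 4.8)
The plan is to handle the alternating groups $A_n$ for the three-conjugate problem (Theorem \ref{thm:3invs}) by reducing to the symmetric group action and using the known generation results. Since $G$ is almost simple with socle $A_n$, the involution $x$ lies in $G \le \Aut(A_n) = S_n$ (for $n \ne 6$), so I may regard $x$ as a permutation. My goal is to exhibit $g_1, g_2$ with $\langle x, x^{g_1}, x^{g_2}\rangle$ nonsolvable, and the exceptions I must carve out are exactly the transposition in $A_n$ and the triple transposition in $A_6$, as recorded in Table \ref{table:exceptions}.

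First I would classify involutions in $S_n$ by cycle type: an involution is a product of $t$ disjoint transpositions. I would split into cases according to $t$ and whether $x$ is inner (in $A_n$, i.e. $t$ even) or outer. The key mechanism is that conjugates of $x$ generate a subgroup whose support and transitivity I can control: if I can force $\langle x, x^{g_1}, x^{g_2}\rangle$ to act on enough points as a primitive (or at least transitive nonsolvable) group, I win. A clean tool here is the result of Malle--Saxl--Weigel and Guralnick--Saxl already invoked in the excerpt: for a single class of involutions three conjugates generate a group containing the socle, except for $PSU(3,3)$. So the heart of the matter is the case $t=1$ (transpositions) and very small $t$ or small $n$, where this generation can genuinely fail and produce the table exceptions.

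The main obstacle will be the genuine exceptions and the borderline small cases. For a transposition $x=(12)$, three conjugate transpositions generate a subgroup of $S_n$ that permutes the involved points, but transpositions generate symmetric groups on their ``support graph'' components, and with only three transpositions the support has at most $6$ points, so the group is a product of small symmetric groups — always solvable (contained in $S_4 \times S_2$ or smaller pieces), which is why the transposition is a true exception. I would verify this by an explicit graph argument: three transpositions correspond to three edges, and any graph on $\le 6$ vertices with $3$ edges has components of size $\le 4$, giving a solvable group. Similarly the triple transposition in $A_6$ is a graph automorphism/exceptional isomorphism artifact that I would check directly, likely in \textsc{Magma} or by hand, since $A_6$ has outer structure making small. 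For all remaining cycle types I expect to produce $g_1,g_2$ explicitly so that the generated group acts as $A_k$ or $S_k$ on a block of size $k \ge 5$, or else cite Guralnick--Saxl, and this should be routine once the genuinely exceptional families are isolated.

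Concretely, for $t \ge 2$ (so $x$ moves at least four points), I would choose $g_1, g_2$ so that $x, x^{g_1}, x^{g_2}$ move overlapping sets of points arranged to generate $A_5$ or larger on a $5$-point subset; for instance with $x=(12)(34)$ one can take conjugates $(23)(45)$ and $(13)(24)$ whose product relations force an $A_5$. The delicate bookkeeping is ensuring the chosen conjugates are genuinely conjugate in $G$ (not merely in $S_n$) — this matters when $G = A_n$ and $x$ is an even involution, since $S_n$-conjugacy and $A_n$-conjugacy can differ. I would resolve this by either choosing conjugating elements inside $A_n$ directly, or by noting that for involutions of this shape both $S_n$-conjugates lie in a common $A_n$-class whenever $n$ is large enough, falling back on explicit small-$n$ verification otherwise.
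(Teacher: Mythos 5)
Your overall strategy---classify involutions by the number $t$ of transpositions, use minimality to cut the support down to a small base case, and exhibit explicit conjugates there---is the same as the paper's, and your sample computation is correct: $\langle (12)(34),(23)(45),(13)(24)\rangle$ contains the Klein four-group and the $5$-cycle $(12)(34)\cdot(23)(45)$, hence equals $A_5$ (and these double transpositions form a single $A_5$-class, so your conjugacy worry is harmless here). However, there is a concrete gap in the step you describe as routine. Your generic mechanism for $t\ge 2$ is to make $\langle x,x^{g_1},x^{g_2}\rangle$ act as $A_k$ or $S_k$ on an invariant $k$-point set with $k\ge 5$, and this provably cannot work for the base case $x=(12)(34)(56)(78)$ in $A_8$---which is exactly the case to which every fixed-point-free involution with $t\ge 4$ reduces, so it cannot be avoided. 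Since $x$ is fixed-point-free on $8$ points, every subset invariant under a group containing $x$ has even size. An orbit of size $2$ or $4$ gives a solvable image; an orbit of size $6$ restricts the three generators to fixed-point-free involutions of $S_6$, i.e.\ triple transpositions, which generate a solvable group (apply the outer automorphism of $S_6$ and your own three-edge graph argument). Hence either $\langle x,x^{g_1},x^{g_2}\rangle$ is intransitive and therefore solvable, or it is transitive on all $8$ points: there is no $5$-point block to exploit. The paper resolves this with an explicit transitive construction, producing conjugates with $\langle x,x^{g_1},x^{g_2}\rangle\cong PSL(2,7)$ acting on the $8$ points. Your fallback of ``citing Guralnick--Saxl'' does not cover this either: the results invoked from \cite{MSW} and \cite{GS} concern groups with a unique class of involutions or groups of Lie type of small rank, whereas $A_8$ has several involution classes, and the quadruple transpositions correspond to the transvections of $PSL(4,2)\cong A_8$, which are genuinely non-exceptional but require a real argument.

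Two smaller points. First, the lemma does not ask you to prove that transpositions and the triple transposition of $A_6$ really are exceptions (your graph argument, while correct, establishes the content of Table \ref{table:exceptions}, not the lemma); it asks you to show that every involution \emph{not} in the table admits three conjugates generating a nonsolvable group. Second, you should make the output of your minimality reduction explicit: the irreducible base cases are $(12)(34)$ in $A_5$; a triple transposition in $S_7$ (which your block mechanism does handle, via the $5$-point block consisting of two of its transpositions together with the fixed point); the quadruple transposition in $A_8$ discussed above; and the involutions of $\Aut(A_6)$ outside $S_6$. Identifying this list, and supplying the transitive construction for $A_8$, is where the actual work of the proof lies.
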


\begin{proof} 
Suppose that $(x,G)$ is a minimal counterexample with $G_0 \cong A_n$.
 We may assume by minimality that one of the following four cases hold: (i) $x=(12)(34)$ and $n=5$; (ii) $x=(16)(25)(34)$
and $n=7$;  (iii) $x=(12)(34)(56)(78)$ and $n=8$; (iv) $x$ is an automorphism of $A_6$ not contained in $S_6$.  In case (i),
let $g_1=(12345)$, $g_2=(345)$ so that $xx^{g_1} =(13542)$,
$xx^{g_2}= (354)$ and so $\langle x,x^{g_1},x^{g_2}\rangle \cong
A_5$. In case (ii), let $g_1=(1743526)$ and $g_2=(23654)$ so that
$xx^{g_1}=(1234567)$, $xx^{g_2}=(12)(56)$, and $\langle
x,x^{g_1}x^{g_2} \rangle \cong S_7$. In case (iii), let
$g_1=(143)(28567)$, $g_2=(13)(265874)$, then
$xx^{g1}=(1574)(2386)$, $xx^{g2}=(375)(468)$, and
$xx^{g2}x^{g1}=(1364725)$ and thus $\langle x,x^{g_1},x^{g_2}
\rangle \cong PSL(2,7)$ \cite{Atlas}. It is straightforward to rule out case (iv) in MAGMA.
\end{proof}

\begin{lemma} 
Suppose that $G_0$ is a simple group of Lie type and $G_0 \triangleleft G \le\Inndiag(G_0)$. Then $(x,G)$ is not a minimal counterexample.  
\end{lemma}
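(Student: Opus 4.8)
The plan is to induct on the rank of $G_0$. Since $x \in \Inndiag(G_0)$, it suffices to exhibit three $G$-conjugates of $x$ generating a subgroup containing $G_0$, or merely a nonsolvable subgroup, and the engine of the induction is the following reduction principle: if $x$ normalizes a classical subgroup $H$ of smaller rank and restricts on $H$ (or on $H/Z(H)$) to a nontrivial involution $\bar{x}$ with $(\bar{x},\bar{H})$ \emph{not} in Table \ref{table:exceptions}, then minimality applied to $H$ produces three $H$-conjugates of $\bar{x}$ generating a nonsolvable subgroup of $H \le G$, so $(x,G)$ is not a counterexample. Throughout I assume $(x,G_0) \notin$ Table \ref{table:exceptions} and aim to contradict minimality.

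First I would clear the base cases. If $G_0$ has a single class of involutions then \cite{MSW} gives three involutions generating $G_0$ (the lone exception $\PSU(3,3)$ being covered by \cite{GS}), and more generally the Guralnick--Saxl theorem \cite{GS} disposes of every $G_0$ of small Lie rank, in particular all exceptional types and the small classical groups. This reduces the problem to the classical groups $\PSL^{\epsilon}(d,q)$, $\PSp(d,q)$, $\POm^{\epsilon}(d,q)$ of large rank, with $x$ inner-diagonal.

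For these, lift $x$ to the natural quasisimple matrix group acting on $V$ and record $r = \dim(x-1)V$: in odd characteristic $x$ is semisimple and $V = V_+ \perp V_-$ splits into nondegenerate $\pm 1$-eigenspaces (orthogonal in the presence of a form) with $r = \dim V_-$, while in characteristic $2$ the Jordan blocks of $x$ have size at most $2$ and $r$ counts the size-$2$ blocks. When $r$ is not minimal, i.e.\ $x$ is neither a reflection nor a transvection, I would apply Lemma \ref{lem:2.2} to place $x$ in a Levi subgroup $J$ of a parabolic on which it acts noncentrally, and then choose a nondegenerate $x$-invariant subspace $W$ of the correct isometry type so that $x|_W$ is a nontrivial involution of a classical group of strictly smaller rank that is not of transvection or reflection type. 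Care is needed so that the restricted involution lands in a nonsolvable classical group and genuinely avoids Table \ref{table:exceptions}; for instance, inside $\PSp(4,2) \cong S_6$ one must steer the induction towards a double transposition rather than the transposition or triple transposition, both of which are table entries. With $W$ so chosen, induction finishes every non-minimal involution.

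The remaining, and decisive, cases are the minimal involutions with $r=1$. Here the essential observation is that a transvection is an involution only in characteristic $2$ and a reflection only in odd characteristic, so the two families separate by parity. For transvections $T_e,T_f$ with $\langle e,f\rangle \ne 0$ the two conjugates already generate a copy of $\Sp(2,q) = \SL(2,q)$, nonsolvable once $q \ge 4$, and two unitary transvections likewise generate $\SU(2,q) \cong \SL(2,q)$; three suitable reflections generate a subgroup containing $\Om(3,q) \cong \PSL(2,q)$, nonsolvable for $q \ge 5$. Linear transvections carry no form constraint and three conjugates generate a copy of $\SL(3,q)$, so $\PSL^{\epsilon}(d,q)$ never enters the table. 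The only survivors are therefore the symplectic, unitary and orthogonal transvections over $\mathbb{F}_2$ and the reflections over $\mathbb{F}_3$: for these the supporting vectors span a space of dimension at most $3$ on which the relevant form is degenerate, so the generated group maps onto a subgroup of $\Sp(2,2) \cong S_3$ (respectively of $\O(3,3)$, which is solvable) with unipotent kernel, and every three conjugates generate a solvable group, giving exactly the pairs recorded in Table \ref{table:exceptions}. The main obstacle is precisely this boundary analysis together with the fusion bookkeeping in the reduction step: one must confirm that over $\mathbb{F}_2$ and $\mathbb{F}_3$ the minimal involutions really do force solvability, while every larger involution, and every minimal involution over a larger field, escapes into an $\SL(2,q)$- or $\PSL(2,q)$-type subgroup.
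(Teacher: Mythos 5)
Your overall strategy -- restrict $x$ to a smaller classical subgroup acting on a suitable invariant subspace, and identify the transvections and reflections over $\mathbb{F}_2$ and $\mathbb{F}_3$ as the only obstructions -- is the same skeleton as the paper's argument, but there are concrete gaps. First, the exceptional groups are not actually disposed of by your base-case citation: \cite{GS} gives three conjugates only for \emph{small} Lie rank, so $F_4$, $E_6$, $E_7$, $E_8$ over general $q$ are not covered by it; the paper handles them by showing an involution always lies in a parabolic subgroup (unipotent if $q$ is even; if $q$ is odd it centralizes a unipotent element and Borel--Tits applies) and then using Lemma \ref{lem:2.2} to push $x$ noncentrally onto a classical Levi component, with the groups over $\mathbb{F}_2$ checked by machine. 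Second, your odd-characteristic framework ``$V = V_+ \perp V_-$'' silently assumes that an inner-diagonal involution lifts to an involution of the matrix group. It need not: there are classes lifting to $y$ with $y^2 = \mu I$, $\mu$ a non-square (centralizers of type $\GL(d/2,q^2)$, $\GU(d/2,q)$, $\Sp(d/2,q^2)$, \dots), which have no eigenvalues in $\mathbb{F}_q$ and hence no $\pm1$-eigenspace decomposition. These classes fall entirely outside your case division; the paper's route through parabolic containment and \cite[Props.\ 1.4, 1.5]{LS} (which are stated for arbitrary elements of order $2$ in the projective isometry group) is what covers them.

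Third, the induction has to bottom out somewhere, and the bottom is precisely where the content lies. Over $\mathbb{F}_2$ and $\mathbb{F}_3$, and for $q\in\{4,5,9\}$ where the Levi components degenerate to $\PSL(2,4)$, $\PSL(2,5)$, $\PSL(2,9)$, the generic reduction either fails or lands on a table entry, and one is left with an explicit list ($B_2(4)$, $B_2(5)$, $B_2(9)$, $\Sp(6,2)$, $\Sp(8,2)$, $\Omega^{\pm}(8,2)$, $\PSU(n,2)$ for $n\le 5$, various groups over $\mathbb{F}_3$, and all exceptional groups over $\mathbb{F}_2$) that must be verified individually; the paper does this by computer. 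Your assertion that ``every larger involution, and every minimal involution over a larger field, escapes'' is exactly the claim that these checks would establish, so as written the proof is not complete. Relatedly, in characteristic $2$ the passage to a nondegenerate invariant subspace on which $x$ is ``not a transvection'' is not just Jordan-block bookkeeping: involution classes are distinguished by the finer Aschbacher--Seitz invariants, and guaranteeing a compatible nondegenerate summand is the content of \cite[Prop.\ 1.4]{LS}, which you would need to invoke explicitly. (Your closing paragraph, verifying that the table entries really are exceptions, is fine but is not required for the lemma as stated.)
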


\begin{proof} 
  If the
twisted Lie rank of $G_0$ is 1, then $G_0 \cong PSL(2,q)$,
$PSU(3,q)$, ${^2}B_2(2^a)$, or ${^2}G_2(3^a)$. For all of these
groups except $PSL(2,q)$, there is a unique class of involutions in ${\rm Inndiag}(G_0)$ and so \cite{MSW} implies that $(x,G)$ is not a
minimal counterexample unless $G_0 \cong PSU(3,3)$.
And if $G_0 \cong PSL(2,q)$, then there exist three conjugates
that generate a group  containing $G_0$ by \cite{GS}. 
So we may
assume that the twisted Lie rank of $G_0$ is at least 2.

%\subsubsection{$q \ge 4$} 
 First suppose that $q \ge 4$. If $x$ is contained in a maximal parabolic
subgroup, then we may assume that $x$ acts nontrivially on the
components of the Levi complement, which are not contained in
Table \ref{table:exceptions} except for those of type $PSL(2,4)$, $PSL(2,5)$,
and $PSL(2,9)$. In fact, the involution $x$ is always contained in a
parabolic  subgroup. Indeed, if $q$ is even, then $x$ is unipotent and if $q$ is
odd, then $x$ is semisimple and $x$ always centralizes a unipotent element $u$ % since the twisted Lie rank is at least 2.
(see the list of semisimple involutions and their centralizers \cite[Table
4.5.1]{GLS}) and the Borel--Tits Theorem implies that %when $G \le {\rm Inndiag}(G_0)$, 
$C_{G}(u)$ is contained in a parabolic subgroup of $G$. 
 %\cite[Theorem 3.1.3]{GLS}
  Now any
maximal parabolic subgroup has an almost simple component that is
not of type $PSL(2,4)$, $PSL(2,5)$, or $PSL(2,9)$ unless 
\begin{align*}
 G_0 
\in  \{ & A_2(q), {^2}A_3(q), 
 B_2(q), B_3(q), C_3(q), D^{\pm}_4(q), {^3}D_4(q),\\
&  G_2(q) \mid q =4,5,\!\!\mbox{ or } 9 \}.
\end{align*}
 However we can eliminate the groups that have a
unique classes of involutions in ${\rm Inndiag}(G_0)$. So if $q \ge 4$, $x\in {\rm Inndiag}(G_0)$ and $(x,G)$ is a minimal counterexample, then
$G_0 \cong {^3}D_4(4)$, ${^2}A_3(q)$, $B_2(q)$, $B_3(q)$, $C_3(q)$,
$G_2(4)$, or $D_4^{\pm}(q)$ for $q=4$, $5$, or $9$. But \cite[Lemma 3.2]{GS} and the proof of \cite[Lemma 3.4]{GS} eliminate $A_2(q)$ and ${^2}A_3(q)$. Now a
(unipotent) involution in ${^3}D_4(4)$ is contained in a subfield
subgroup ${^3}D_4(2)$ (see \cite{Spaltenstein}) % only A1 and 3A1 are order 2 in  char 2
 and so we can eliminate the case $G_0\cong{^3}D_4(4)$. If $G_0 \cong G_2(4)$, then any involution is contained in a subgroup of type $SL^{\pm}(3,q)\!:\!2$ (see for example \cite[Proposition 5.6]{GS}). If $G_0 \cong B_3(4)\cong
C_3(4)$, then we may assume that $x$ is contained an end-node
parabolic subgroup $P$ and not in the unipotent radical of $P$; the Levi complement will be of type $C_2(4)$ or $A_2(4)$. Thus
$G_0 \not\cong C_3(4)$. The same reasoning eliminates
$D^{\pm}_4(4)$. Thus the remaining possibilities for $q \ge 4$ are
\[ G_0
\in \{ B_2(4), B_2(5), B_2(9), B_3(5), B_3(9),
 C_3(5),C_3(9),  D^{\pm}_4(5),D^{\pm}_4(9), G_2(4)\}.
\]
 If $G_0 \cong C_3(5)$ or $C_3(9)$, then
\cite[Propostion 1.5]{LS} shows that $x$ stabilizes an orthogonal
decomposition $V = W \oplus W^{\perp}$, where $\dim W=4$ and $x$
acts noncentrally on $W$. So we can reduce to the case $C_2(5)$ or $C_2(9)$ and $(x,G)$ cannot be a
minimal counterexample. Similarly if $G_0 \cong B_3(5)$ or
$B_3(9)$, by \cite[Propostion 1.5]{LS}, $x$ stabilizes an
orthogonal decomposition $V = W \oplus W^{\perp}$, where $\dim
W=5$ or $6$ and $x$ acts noncentrally on $W$. % note that dim cant be 4 since 7=4+3 and in case O, the 3 cant be indecomposable.
 Since $\POm(5,q)$ and $\POm^{\pm}(6,q)$ for $q=5$ and $9$ are not listed in Table \ref{table:exceptions}, $(x,G)$ cannot be a minimal counterexample.
Similarly if $G_0 \cong D_4^{\pm}(5)$ or  $D^{\pm}_4(9)$, then $x$ will stabilize an orthogonal decomposition
as above where $\dim W =6$ or $7$ and $x$ acts noncentrally on $W$. % if 8=5+3, then the 3 is decomposable. if 8=4+4; work on the 4 spaces: either x irreducible on the 4 -  ie. p | q^4+1 or not possible or x irreducible in GL(2,q), but GL(2,5) and GL(2,9) do not contain irreducible elements of order 2 or 4.
Since
$\POm^{\pm}(6,q)$ and $\POm(7,q)$ are not listed in Table \ref{table:exceptions} for $q=5$ or $9$,  $(x,G)$ cannot be a minimal
counterexample. So for $q \ge 4$ it remains to consider
 \[
G_0 \in \{ B_2(4), B_2(5), B_2(9) \}. 
\]
 We can check in MAGMA that the theorem holds for these groups. 
 
% \subsubsection{$q =3$} \label{ssec:q=3}
Now suppose that $q=3$. Then $x$ is contained in a maximal
parabolic subgroup, and by Lemma \ref{lem:2.2}, we may assume
that it acts noncentrally on all of the Levi components. To prove
that $G$ is not a minimal counterexample, it suffices that one of
the Levi components is not in Table \ref{table:exceptions} and is
not solvable. Thus the only possibilities with $q=3$ are 
\begin{align*}
G_0 \in  \{ & A_2(3), A_3(3), {^2}A_2(3), {^2}A_3(3), {^2}A_4(3), {^2}A_5(3), B_n(3),\\
&   C_3(3), C_4(3), D_4^{\epsilon}(3),
{^3}D_4(3), G_2(3), {^2}G_2(3) \}
\end{align*}
 If we eliminate the cases
where there is a unique class of inner involutions, then we may assume that 
\begin{align*}
G_0 \in \{ & A_3(3), {^2}A_2(3), {^2}A_3(3), {^2}A_4(3), {^2}A_5(3),  \\ & B_n(3), C_3(3), C_4(3), D_4^{\epsilon}(3)\}.
\end{align*}
If $G_0 \cong B_n(3)$, then by
\cite[Proposition 1.5]{LS}, % x is an element of order 2 in GO so can only act irreducibly on 1-d spaces. Keep breaking up $W^{\perp}$ and adding the nd pieces to $W$.
 $x$ stabilises an orthogonal decomposition
$W \oplus W^{\perp}$, where $W$ is chosen so that $\dim
W^{\perp}$ is minimal, and is therefore at most 2. If $n \ge 4$ and $x$ is not a reflection
then we can choose $W$ so that  $x$ acts on $W$  noncentrally and not as a reflection.  Thus $(x,G)$ cannot be a minimal
counterexample if $G_0=B_n(3)$ and $n \ge 4$. Checking the remaining cases in MAGMA shows that there are no minimal
counterexamples when $q=3$.

Now suppose that $q=2$ so that $x$ is unipotent. Then we can use
Lemma \ref{lem:2.2}. If $G_0=A_n(2)$, then we may assume that
$x$ is contained in a maximal end-node parabolic subgroup $P$ and
not contained in the unipotent radical of $P$. Thus we can reduce
to the case that $x \in A_{n-1}(2)$ since there are no {\it
inner} exceptions in $A_{n-1}(2)$. This argument thus reduces to the
case of $G=PSL(3,2) \cong PSL(2,7)$, and \cite{GS} shows that
$(x,G)$ is not a counterexample. Now suppose that $G_0 \cong
{^2}A_n(2)$. Then \cite[Proposition 1.4]{LS} shows that $x$ stabilizes an
orthogonal decomposition $V=W \oplus W^{\perp}$ such that $W$ has codimension $1$ or $2$, %keep breaking up $W^{\perp}$
 and if $x$ is not a unitary transvection, we can choose $W$ such that $x$ does not act on  $W$ trivially, or as a transvection.
%if we choose
%$W$ so that $b=\dim W^{\perp}$ is minimal, then $b\le 3$. If $n \ge
%7$, then consider $x_W \in PSU(n-b,2)$. If $x_W$ is not a unitary
%transvection, then $(x,G)$ cannot be a minimal counterexample. So
%we may assume that $x_W$ is a unitary transvection and thus
%$x_{W^{\perp}}$ is nontrivial. This is impossible if $b=1$. If
%$b=2$, then $x_{W^{\perp}}$ acts as a unitary transvection on
%$W^{\perp}$. Let $\{e_1,f_1\}$ be a standard unitary basis for
%$W^{\perp}$ such that
%\[x_{W^{\perp}} : v \longrightarrow v+ \lambda (v,e_1)e_1 \]
%and let $\{e_2,f_2,e_3,f_3, \ldots\}$ be a standard unitary basis
%for $W$ such that
%\[ x_{W} : v \longrightarrow v+ \mu (v,e_2)e_2.\]
%Then $x$ stabilizes $U=\langle e_1,e_2,f_1,f_2 \rangle$ and $x_U$ is not a unitary transvection; thus $(x,G)$ cannot be a
%minimal counterexample. If $b=3$, then we may again assume that
%$x_W$ is a unitary transvection and so $x_{W^{\perp}}$ is
%nontrivial. If $x_{W^{\perp}}$ is not a unitary transvection, then
%let $w \in W$ be a non singular vector fixed by $x$; it follows that $x$
%will act nontrivially on the 4-dimensional nondegenerate space
%$\langle w,W^{\perp}\rangle$. Therefore we may assume that
%$x_{W^{\perp}}$ is a unitary transvection and we can apply the
%same argument as in the $b=2$ case. 
Thus if $n \ge 6$, then
there are no minimal counterexamples with $G_0 \cong {^2}A_n(2)$. We can check in
MAGMA that the only exceptions when $n \le 5$ are the unitary transvections.

 Now suppose that $G_0 \cong B_n(2) \cong C_n(2)$ or $D_n^{\pm}(2)$. Suppose that
$x$ is not a transvection. If $n \ge 5$, then we can take an
orthogonal decomposition $V=W \oplus W^{\perp}$ as above using \cite[Proposition
1.4]{LS} such that $x$ does not act on $W$ trivially or as a transvection, and $\dim W \ge 6$.
%and since $b \le 2$ , it follows that $b=2$. If $x_W$ is not a symplectic
%transvection in $Sp(n-2,2)$, then $(x,G)$ cannot be a minimal
%counterexample so suppose that $x_W$ is a symplectic transvection,
%and therefore $x_{W^{\perp}}$ is also a symplectic transvection.
%It is, then easy to construct a nondegenerate 4-dimensional $x$
%invariant space $U$ such that $x_U$ is not a
%symplectic transvection. 
Thus $(x,G)$  cannot be a minimal
counterexample when $n \ge 5$. Note that $Sp(4,2) \cong S_6$ so
we can just verify in MAGMA that $Sp(6,2)$, $Sp(8,2)$, and  $\Omega^{\pm}(8,2)$ cannot be
counterexamples to the theorem.

To complete the analysis when $q=2$, we eliminate the cases
$G_0 \cong {^3}D_4(2)$, $E_6(2)$, ${^2}E_6(2)$, $E_7(2)$,
$E_8(2)$, $F_4(2)$, ${^2}F_4(2)$,  and $G_2(2)$ using MAGMA.  
\end{proof}

\begin{lemma} 
Suppose that $G_0$ is a simple group of Lie type and $x$ is a field automorphism of $G_0$. Then $(x,G)$ is not a minimal counterexample.  
\end{lemma}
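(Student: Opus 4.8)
The plan is to reduce this lemma, in essentially one step, to the odd prime order case already settled in Theorem \ref{thmA*}. As in the proof of Lemma \ref{lem:outinvs}, I would first invoke \cite[7.2]{GL} to assume that $x$ is a \emph{standard} field automorphism of $G_0$, so that every power of $x$ is again a standard field automorphism. Since $x$ has order $6$ or $9$, its order is divisible by $3$; I therefore set $k=|x|/3$ and $z=x^{k}$, so that $z$ has order exactly $3$ and is itself a (standard) field automorphism of $G_0$. The whole argument then turns on passing from $x$ to this canonical power $z$ and back.

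The key observation is that $z$ is an \emph{outer} automorphism of $G_0$: a field automorphism has nontrivial image in $\Aut(G_0)/\Inndiag(G_0)$, and since $\Inndiag(G_0)$ is normal in $\Aut(G_0)$, no conjugate of $z$ can lie in $\Inndiag(G_0)$. In particular $z$ is not a long root element in a group of Lie type over $\mathbb{F}_{3}$, not a short root element of $G_2(3)$, and not a pseudoreflection with $G_0\cong PSU(d,2)$, since all three of these are inner-diagonal elements. Hence $z$ is not one of the exceptions listed in part (2) of Theorem \ref{thmA*}. Applying that theorem to the element $z$ of prime order $3$ in the almost simple group $G$ therefore places us in case (1): there exists $g\in G$ with $\langle z,z^{g}\rangle$ nonsolvable.

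Finally I would lift this back to $x$. Because $z=x^{k}$ we have $z^{g}=(x^{g})^{k}$, so $\langle z,z^{g}\rangle\le\langle x,x^{g}\rangle$; as a subgroup of a solvable group is solvable, the nonsolvability of $\langle z,z^{g}\rangle$ forces $\langle x,x^{g}\rangle$ to be nonsolvable. Thus $(x,G)$ cannot be a minimal counterexample.

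I do not expect a genuine computational obstacle here: unlike the inner-diagonal and field-automorphism cases handled for Theorem \ref{inv}, no counting over involutions or case analysis of maximal subgroups is needed. The only point requiring care is the verification that a field automorphism cannot coincide with any of the three inner-type exceptions of Theorem \ref{thmA*}, and this is immediate from the outer nature of $z$. I would also emphasise that this passage-to-a-power device relies crucially on $3\mid|x|$, so it applies uniformly to both orders $6$ and $9$ but gives nothing for $2$-elements — consistent with the genuinely harder behaviour of $2$-elements noted in the introduction.
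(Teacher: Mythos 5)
Your proposal proves the wrong statement. This lemma sits in the proof of Theorem \ref{thm:3invs}: here $x$ is an \emph{involution} (an involutory field automorphism of $G_0$), and a minimal counterexample is one for which no three conjugates $x, x^{g_1}, x^{g_2}$ generate a nonsolvable group. Your argument assumes $|x|\in\{6,9\}$, sets $z=x^{|x|/3}$, and applies Theorem \ref{thmA*} to the order-$3$ element $z$; none of this is available when $|x|=2$. (That device would belong to the proof of Theorem \ref{thm:9}, and even there the reductions of that section force $q=3$ or $G_0\cong PSU(d,2)$, where there are no involutory field automorphisms, so the case you treat does not arise in this form.) Moreover your conclusion has the wrong shape: you produce a single conjugate $x^{g}$ with $\langle x,x^{g}\rangle$ nonsolvable, but for an involution $\langle x,x^{g}\rangle$ is dihedral, hence always solvable --- this is precisely why Theorem \ref{thm:3invs} asks for three conjugates.

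The paper's actual proof requires genuine work of a different kind. After reducing to a standard field automorphism via \cite[7.2]{GL} and discarding the Suzuki--Ree groups (whose field automorphisms have odd order), one has $q=q_0^2\ge 4$ and $x$ acts as a field automorphism on an $SL(2,q)$ subgroup, so minimality reduces the problem to $G_0\cong PSL(2,q)$, apart from the small cases $q=4,9$, which are handled via subgroups of type $A_2(q)$, $B_2(q)$, $G_2(q)$, ${^3}D_4(q)$ and MAGMA. For $PSL(2,q_0^2)$ one then exhibits explicit elements $y,z$ such that $x$, $xy$, $xz$ are conjugate involutions and $\langle x,xy,xz\rangle$ contains $PSL(2,q_0)$. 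You would need to supply an argument of this kind; no appeal to Theorem \ref{thmA*} can substitute for it, since that theorem says nothing about $2$-elements.
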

\begin{proof} 
Now suppose that $x$ is a field automorphism of order $2$.
By \cite[7.2]{GL}, we may assume that $x$ is a standard field automorphism.

First observe that if $G$ is a Suzuki--Ree group then all field automorphisms have odd order. %and there are no other outer automorphisms.
So we may assume that $G$ is not a Suzuki--Ree group. Now
$q \ge 4$ and $x$ will act as a field
automorphism on a $SL(2,q)$ subgroup and so $(x,G)$ cannot be a
minimal counterexample unless $G_0=PSL(2,q)$ or $q=4$ or $9$. If $q=4$ or $9$,
then we may assume that $x$ acts as a field automorphism on a subgroup of type
$A_2(q)$, $B_2(q)$, or $G_2(q)$ or ${^3}D_4(q)$. We can eliminate the first two cases in MAGMA. If $G_0 \cong G_2(q)$ or ${^3}D_4(q)$, then $x$ normalizes but does not centralize a subgroup of type $SL(3,q)$ or $SL(2,q^3)$ respectively. So it remains to treat the
 case where $x$ is a field automorphism of $PSL(2,q)$. If $q$ is
 even, then since $q\ne 4$, we have $q=q_0^2$ where $q_0 \ge 4$.
 But, then there exist $y$ of order $q_0-1$ and $z$ of order
 $q_0+1$ such that $x$, $xy$, and $xz$ are conjugate,
 % they are all involutions in the coset of a field aut so they are all conjugate.
  and $y$ and
 $z$ do not commute and thus $\langle x,xy,xz \rangle$ contains
 $PSL(2,q_0)$, which is not solvable. If
 $q$ is odd, then suppose that $\mathbb{F}_q^* = \langle w
 \rangle$. Let $\lambda = w^{\frac{(q_0+1)}{2}}$ so that
 $\lambda^{q_0}=-\lambda$. Then $x$ inverts
\[ \left (
 \begin{matrix}
 1& \lambda \\
 0 & 1
 \end{matrix} \right) \]
and
\[ \left( \begin{matrix}
 1& 0 \\
 \lambda & 1
 \end{matrix}  \right) \]
 thus there exist conjugates of $x$, $y$ and $z$ say such that
 $xy$ and $xz$ are the transvections above. It follows that $\langle
 x,y,z\rangle$ contains a subgroup of type $PSL(2,q_0)$ which is
 not solvable since $q > 9$. \\
  \end{proof}

\begin{lemma} 
Suppose that $G_0$ is a simple group of Lie type and $x$ is a graph-field automorphism of $G_0$. Then $(x,G)$ is not a minimal counterexample.  
\end{lemma}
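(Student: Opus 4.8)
The plan is to mirror the preceding lemma on field automorphisms, reducing the action of $x$ to a field automorphism of a rank-one subgroup. First, by \cite[7.2]{GL} I would assume that $x$ is a standard graph-field automorphism of order $2$. Such an involution forces $q=q_0^2$ and forces $G_0$ to be one of the untwisted groups admitting a diagram symmetry of even order, namely $A_n(q)$, $D_n(q)$, $E_6(q)$ or $D_4(q)$, with $C_{G_0}(x)$ the corresponding twisted group ${}^2\Sigma(q_0)$. (The special graph automorphisms of $B_2(2^a)$, $F_4(2^a)$ and $G_2(3^a)$ give rise only to Suzuki--Ree fixed-point subgroups and are treated by the earlier Suzuki--Ree arguments.) None of these pairs $(x,G_0)$ occurs in Table \ref{table:exceptions}, so the aim is simply to exhibit three conjugates of $x$ generating a nonsolvable group in every case.

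The key observation is that $x$ normalises a root subgroup $H$ of type $A_1$, i.e.\ a copy of $SL(2,q)$, on which it induces a field automorphism of order $2$. For $D_n$, $D_4$ and $E_6$ I would take $H$ to correspond to a node fixed by the diagram symmetry, so that the graph part acts trivially on $H$; for $A_n$ I would take a block $SL(2,q)$, on which the inverse-transpose map $M \mapsto M^{-T}$ is inner (it is conjugation by an antidiagonal element of $SL(2,q)$), so that again $x$ induces a field automorphism of $H$ up to an inner automorphism. The construction in the field-automorphism lemma then applies verbatim: $x$ inverts two transvections $t_1,t_2 \in H$ whose product generates a subfield subgroup $PSL(2,q_0)$, the elements $y = x t_1$ and $z = x t_2$ are involutions conjugate to $x$ (by Lang--Steinberg each $t_i$ has the form $\tau(g)^{-1}g$), and $x y = t_1$, $x z = t_2$, so that $\langle x,y,z \rangle \supseteq \langle t_1,t_2 \rangle \supseteq PSL(2,q_0)$. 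This disposes of every case with $q_0 \ge 4$, that is $q \ge 16$.

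The remaining difficulty is the small fields $q_0 \in \{2,3\}$, that is $q \in \{4,9\}$, where $PSL(2,q_0)$ is solvable and the rank-one reduction breaks down. Here I would instead let $x$ act as a graph-field automorphism on a subsystem subgroup of type $A_m(q)$: the block $SL(m+1,q) \le A_n(q)$ is invariant under the graph automorphism, and for $D_n$ and $E_6$ the graph automorphism restricts to the graph automorphism of an $A_m(q)$ Levi or subsystem factor, so that every case reduces to a graph-field involution of some $PSL(m+1,q)$. Choosing $m$ minimal so that the fixed-point group $PSU(m+1,q_0)$ is nonsolvable ($m = 2$ when $q_0 = 3$, $m = 3$ when $q_0 = 2$) isolates finitely many base configurations, the smallest being $PSL(3,9)$, $PSL(4,4)$ and the residual group $PSL(3,4)$.

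The hard part will be these finitely many base cases, where the transvection construction no longer lands inside a nonsolvable subfield subgroup. For them I expect to argue either by an explicit analogue, using unipotent elements that $x$ inverts whose products generate the unitary subgroup, or --- following the treatment of the sporadic groups --- by a direct check (in MAGMA, or via Thompson's criterion applied to the relevant outer coset) that three conjugate graph-field involutions of each base group generate a nonsolvable subgroup. Verifying that the subsystem reductions really terminate in this short list, and dispatching the largest base group despite its order, is where the genuine work lies.
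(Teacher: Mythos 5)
Your route is genuinely different from the paper's: you try to push the graph-field involution down to a \emph{field} automorphism of a rank-one subgroup and re-run the transvection construction, whereas the paper pushes it down to a \emph{graph} automorphism of the subfield subgroup. For $PSL(d,q)$ the paper simply observes that $x=\gamma\phi$ normalizes $C_{G_0}(\phi)\cong PSL(d,q_0)$ and acts on it as the graph automorphism $\gamma$, so minimality reduces everything to the graph-automorphism lemma, the only obstruction being that $(\,\text{graph},PSL(4,2))$ sits in Table \ref{table:exceptions} --- whence the single MAGMA check of $PSL(4,4)$; types $D_m$ and $E_6$ are handled by inductive reductions to $D_{m-1}(q)$ and $A_5(q)$, and $B_2(2^a)$, $F_4(2^a)$, $G_2(3^a)$ by noting the extraordinary graph automorphism squares to the generating field automorphism, so involutory graph-field automorphisms occur only for $a=1$.

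The gap in your argument is the type $A$ case, which is the main one. On a block $H\cong SL(2,q)$ the graph part acts as $M\mapsto M^{-T}$, which is conjugation by $w=\left(\begin{smallmatrix}0&1\\-1&0\end{smallmatrix}\right)$, so $x$ induces $\mathrm{inn}(w)\circ\phi$ on $H$. This is \emph{not} a field automorphism ``up to relabelling'': with $\lambda^{q_0}=-\lambda$ one computes that $\mathrm{inn}(w)\circ\phi$ sends $\left(\begin{smallmatrix}1&\lambda\\0&1\end{smallmatrix}\right)$ to $\left(\begin{smallmatrix}1&0\\\lambda&1\end{smallmatrix}\right)$ and vice versa, i.e.\ it \emph{swaps} the two transvections rather than inverting each, and indeed it inverts no nontrivial upper unitriangular element at all. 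Hence $xt_i$ is not an involution and the construction from the field-automorphism lemma does not apply ``verbatim''; conjugacy classes of involutions in the coset $\mathrm{Inndiag}(H)\phi$ are not all field automorphisms, and $\mathrm{inn}(w)\phi$ need not be one. For $n$ odd you could retreat to the $SL_2$ at the fixed middle node (where $\gamma$ genuinely acts trivially), but for $n$ even there is no fixed node, so e.g.\ $PSL(3,q)$ and $PSL(5,q)$ are not covered by either branch of your argument for large $q_0$. Your fallback for small $q_0$ inherits the same problem (and the claimed restriction of the $D_n$ graph automorphism to an $A_m$ Levi also fails, since that Levi is not $\gamma$-invariant). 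The fix is essentially the paper's: replace ``field automorphism of a root $SL_2$'' by ``graph automorphism of the subfield subgroup $PSL(d,q_0)$,'' which sidesteps the sign/swap issue entirely.
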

\begin{proof} 
 Now suppose that $x$ is a graph-field automorphism so that $G_0$ is an untwisted simple group of Lie type. By
 \cite[7.2]{GL}, we may assume that $x$ is a standard graph-field
 involution. Define $q_0$ by $q=q_0^2$ as before. 
 
  If $G_0 \cong PSL(d,q)$, with $d\ge 3$, then $x$ normalizes a subfield subgroup $PSL(d,q_0)$ (acting as a graph automorphism), which is not an exception unless
 $G_0 \cong PSL(4,4)$, since $PSL(4,2)$ is in Table \ref{table:exceptions}. We can eliminate this case in MAGMA. 
% If  $G_0 \cong B_2(2^a)$ ($a$ even), then $x$ will act as a graph automorphism on $B_2(2^{a/2})$, which is not contained in Table \ref{table:exceptions} unless $G_0 \cong B_2(4)$. The only involutary graphfield auts occur when $a=1$  %actaully this case can be eliminated : Sp(4,2)=S_6 but elements in S6 induce fields
  If $G_0 \cong D_m(q)$ and
  $m \ge 4$, then $x$ will act as a graph field automorphism
 on a $D_{m-1}(q)$ subgroup. % q\ge 4
 Similarly a graph-field automorphism of $E_6(q)$ will act as a
 graph-field automorphism on $A_5(q)$. % q >3 so no exceptions
  If $G_0 \cong F_4(2^a)$, $G_2(3^a)$, or $B_2(2^a)$ then the extraordinary `graph' automorphism squares to the generating field automorphism.  So there are involutary graph-field automorphisms (in the sense of \cite[2.5.13]{GLS})  only when $a=1$;  these cases are easily eliminated in MAGMA. % In B_2(2)=A6, the transpositions and triple transpositions induce field auts I think. 
 \end{proof}

\begin{lemma} 
Suppose that $G_0$ is a simple group of Lie type and $x$ is a graph automorphism of $G_0$. Then $(x,G)$ is not a minimal counterexample.  
\end{lemma}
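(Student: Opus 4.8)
The plan is to follow the pattern of the preceding lemmas. Using \cite[7.2]{GL} I would first reduce $x$ to a standard graph involution, and then treat separately the three families of simple groups of Lie type that possess a genuine graph automorphism of order $2$, namely $A_{n-1}(q)=\PSL(n,q)$ with $n\ge3$, $D_n(q)=\POm^{+}(2n,q)$, and $E_6(q)$. The exceptional ``graph'' involutions coming from the special isogenies of $B_2$, $G_2$ and $F_4$ square to a field automorphism and have already been disposed of in the graph-field lemma, so they do not recur here. I would note at once that in even characteristic a graph involution of an orthogonal group is an orthogonal transvection; these are exactly the pairs $\POm^{\pm}(d,2)$ ($d$ even) recorded in Table \ref{table:exceptions}, and are left as genuine exceptions.

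The main tool is a rank reduction by induction, feeding into the result of Guralnick and Saxl \cite{GS} quoted at the start of the section, which supplies three conjugates of any involution generating a subgroup containing the socle once the Lie rank is small. For type $A_{n-1}$ I would use a block-diagonal subgroup $\GL(m,q)\times\GL(n-m,q)$: the inverse-transpose automorphism restricts to the graph automorphism on each factor, so $x$ induces a graph involution on a subsystem of type $A_{m-1}$, and decreasing $n$ in this way reaches $\PSL(3,q)$, a pair absent from Table \ref{table:exceptions}. For type $D_n$ the stabiliser of a nondegenerate $2$-space yields a $\POm^{+}(2n-2,q)$ on which the reflection $x$ again acts as a graph involution, and the $E_6(q)$ graph involution restricts to a graph involution of a subsystem of type $A_5$. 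In type $D_n$ with $q$ large there is a cleaner direct argument: three conjugate reflections whose vectors span a nondegenerate $3$-space generate the orthogonal group of that space, and $\Om(3,q)\cong\PSL(2,q)$ is nonsolvable for $q\ge4$, so three conjugates of $x$ already suffice.

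The hard part will be the small-rank and small-field base cases, where the induction terminates and one must separate a genuine exception from a case that does yield a nonsolvable subgroup. The direct reflection argument breaks down for $q=3$, where $\Om(3,3)\cong\PSL(2,3)$ is solvable, so those orthogonal groups must instead be pushed down to $D_3(q)\cong A_3(q)=\PSL(4,q)$ and handled with type $A$, or checked in MAGMA; and in type $A$ one must confirm that the block reduction does not stop at an exceptional pair (bearing in mind that the graph involutions can split into several classes, distinguished by whether the fixed subgroup is of orthogonal or symplectic type). Indeed, the reduction is genuinely obstructed at the smallest fields and for particular graph classes, which is precisely where the surviving exceptions live: $\PSL(4,2)\cong A_8$ and $\PSU(4,2)\cong\POm(5,3)$ (the latter reached through the exceptional isomorphisms $\PSU(4,2)\cong\POm^{-}(6,2)\cong\POm(5,3)$), together with the even-characteristic orthogonal transvections. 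Verifying that these, and only these, resist the construction --- most efficiently by a MAGMA computation in the relevant small groups --- is the delicate final step.
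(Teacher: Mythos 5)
Your overall strategy --- standardise the graph involution via \cite[7.2]{GL}, reduce the rank by restriction to a subsystem subgroup, and clean up small cases in MAGMA --- matches the paper's, but there are three concrete gaps. First, you treat only $A_{n-1}(q)$, $D_n(q)$ and $E_6(q)$. In the terminology of \cite[2.5.13]{GLS}, which is what the lemma is using, the twisted groups $\PSU(d,q)$, $\POm^{-}(2n,q)$ and ${^2}E_6(q)$ also carry involutory graph automorphisms, and the paper spends most of its effort on the unitary case $L^{\epsilon}(d,q)$; these cases are simply absent from your argument. Second, your description of the conjugacy classes is wrong in a way that matters. In even characteristic the graph involutions of an orthogonal group are the involutions of nontrivial Dickson invariant, i.e.\ the classes $b_{\ell}$ with $\ell$ odd in the Aschbacher--Seitz notation; only $b_1$ is a transvection, so the assertion that ``a graph involution of an orthogonal group is an orthogonal transvection'' would wrongly consign the classes $b_3,b_5,\dots$ to the exception table, when in fact they must be (and in the paper are) shown to yield nonsolvable groups via a reduction to $\POm^{\epsilon}(d-b,q)$ in which $x$ is arranged not to act as a transvection. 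Likewise in odd characteristic a graph involution of $D_n(q)$ has $-1$-eigenspace of arbitrary odd dimension, not just $1$, so your ``three conjugate reflections spanning a nondegenerate $3$-space'' argument covers only one class.

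Third, the type $A$ reduction does not terminate where you say. The graph involutions with centralizer of symplectic type (and their even-characteristic analogues) descend from $\PSL(d,q)$, $d$ even, only to a subgroup of type $\PSL(d-2,q)$, so the induction bottoms out at $L^{\epsilon}(4,q)$ rather than at $\PSL(3,q)$. This is not cosmetic: the paper handles $L^{\epsilon}(4,q)$ separately through the isomorphism $L^{\epsilon}(4,q)\cong \POm^{\epsilon}(6,q)$ and \cite[Propositions 1.4 and 1.5]{LS}, and handles $L^{\epsilon}(3,q)$ by quoting \cite[Lemmas 3.2 and 3.3]{GS}; both base cases need their own argument (or a MAGMA check for $q\le 3$), and the proposal supplies neither. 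Your final list of surviving exceptions is correct, but as written the proof does not establish the lemma for the unitary groups, for the non-transvection and non-reflection orthogonal graph involutions, or for the symplectic-type classes in $\PSL(d,q)$.
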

\begin{proof} 
We use the terminology of \cite[2.5.13]{GLS}. So there are no graph automorphisms of $F_4(2^a)$, $G_2(3^a)$, or $B_2(2^a)$, % not just involutary, but none at all see GLS pg 60 (b)(3)
and no graph-field or graph automorphisms of the Suzuki--Ree groups. %all are called fields
 
If $G_0 \cong L^{\epsilon}(3,q)$, then \cite[Lemmas 3.2 and 3.3]{GS} show that $(x,G)$ cannot be a minimal counterexample.

If $G_0 \cong L^{\epsilon}(4,q)$, then observe that $G_0 \cong \POm^{\epsilon}(6,q)$, and an involutory graph automorphism of $L^{\epsilon}(4,q)$ is contained in $PCO^{\epsilon}(6,q)$. By \cite[Propositions 1.4 and 1.5]{LS} for example, $x$ will normalize (and not centralize) a subgroup of type $\POm(5,q)$, $\POm^{\pm}(4,q)$, or $\POm(3,q)$. Therefore $(x,G)$ cannot be a minimal counterexample unless $q=2$ or $q=3$. We can verify that Theorem \ref{thm:3invs} holds for $q=2$ and $3$ using MAGMA.  

% $POm^+(4,q) = PSL(2,q)^2$ - now we use Lemma in \cite{Guest1}

Suppose that $G_0\cong L^ \epsilon(d,q)$, $q$ is odd, and $d \ge 5$.
The class representatives of graph involutions in this case are
given in \cite[Table 4.5.1]{GLS} and \cite[3.9]{LS}. We can deduce from
these representatives that if $d$ is even, then $x$ will act as a
graph automorphism on a type $PSL(d-1,q)$ or $PSL(d-2,q)$
subgroup; if $d$ is odd, then $x$ will act as a graph
automorphism on a type $PSL(d-1,q)$ subgroup. So since $(x,G)$ is
a minimal counterexample, we can reduce to the case $G_0 \cong L^ \epsilon(4,q)$ since $L^{\epsilon}(3,q)$ has been eliminated. 

Now suppose that $G_0\cong L^ \epsilon(d,q)$, $q$ is even, and $d \ge 5$.
The class representatives for graph involutions when $q$ is even can be found in \cite[Lemma 3.7]{Moufang}. There are two classes when $d$ is even and one class when $d$ is odd. In all cases, $x$ normalizes a subgroup of type $L^{\epsilon}(d-1,q)$ or $L^{\epsilon}(d-2,q)$, acting as a graph automorphism. So since $(x,G)$ is
a minimal counterexample, we may assume that $G_0 \cong L^ \epsilon(3,q)$,$L^ \epsilon(4,q)$, $L^ \epsilon(5,2)$, or $L^ \epsilon(6,2)$. We can eliminate the last two possibilities using MAGMA.

If $x$ is an involutory graph automorphism of $P\Omega^{\pm}(d,q)$, then $x \in PCO^{\pm}(d,q)$ and we may assume that $d \ge 8$. By \cite[Lemma 3.3]{LS} for example, we may assume that $x$ normalizes but does not centralize a subgroup of type $\POm^{\epsilon}(d-b,q)$, where $b \le 4$. % If $d-b=4$ then we might have $SL_(q)\circ SL_2(q)$ and we apply Lemma \ref{1.1} i.e Lemma 2.1.
Thus $(x,G)$ cannot be a minimal counterexample unless $q=2$, or $q=3$. But if $d \ge 10$ and $x$ is not an orthogonal transvection or reflection, then we may assume that $x$ does not act as orthogonal transvection or reflection in the type  $\POm^{\epsilon}(d-b,q)$ subgroup. We can eliminate the groups $G_0 \cong \POm^{\epsilon}(8,3)$ and $\POm^{\epsilon}(8,2)$ in MAGMA.  

If $G_0 \cong E^{\epsilon}_6(q)$ and $x$ is an involutary graph automorphism, then $x$ normalizes but does not centralize a subgroup of type $F_4(q)$; thus $(x,G)$ cannot be a minimal counterexample. This follows from an analysis of the standard class representatives of graph involutions found in \cite[Lemma 3.6]{Moufang} for $q$ odd and \cite[19.9]{AS} for $q$ even. See the proof of \cite[Proposition 5.2]{GS} for example.
\end{proof}

\begin{lemma} 
 If $G_0$ is a sporadic group then $(x,G)$ is not a minimal counterexample.  
\end{lemma}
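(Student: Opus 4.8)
The plan is to verify the claim computationally for all sporadic simple groups, since there are finitely many and the argument is genuinely case-based rather than structural. I would proceed exactly as in the analogous lemma from the proof of Theorem \ref{inv}: use the character-table library in GAP together with Thompson's criterion \cite[Corollary 3]{Thompson} that a finite group $H$ is nonsolvable precisely when there exist elements $a,b,c \in H$ of pairwise coprime orders with $abc=1$. For each sporadic group $G_0$ and each involution class, one asks whether three conjugates of the involution $x$ generate a nonsolvable subgroup, equivalently (after passing to a suitable quotient situation) whether the structure constants force nonsolvability.

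First I would reduce the work using the results already invoked at the start of this section. By \cite{MSW}, if $G_0$ has a unique class of involutions then three involutions already generate $G_0$, so such groups are immediately excluded; this disposes of most of the smaller sporadics and many classes in the larger ones. For the remaining cases I would run through the involution classes class by class. The key computational tool is that the class multiplication (structure) constants are read directly from the character table: the number of ways to write a fixed element of a target class $C$ as a product $x_1 x_2$ with $x_1, x_2 \in x^{G_0}$ is
\begin{displaymath}
  \frac{|x^{G_0}|^2}{|G_0|} \sum_{\chi \in \mathrm{Irr}(G_0)} \frac{\chi(x)^2 \overline{\chi(c)}}{\chi(1)},
\end{displaymath}
so one can detect whether $\langle x, x^{g} \rangle$ — and, iterating, $\langle x, x^{g_1}, x^{g_2}\rangle$ — can avoid lying in a solvable subgroup. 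In practice, for a triple of involutions it is cleanest to apply Thompson's criterion directly: for each involution class I would search for a class $D$ of odd order such that the structure constant for the product of $x$ with an $x$-conjugate landing in $D$ is nonzero, and such that $D$ generates a nonsolvable group together with $x$.

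The one genuine obstacle is \emph{size}: the Monster, Baby Monster, and the Fischer groups have very large character tables, and naive structure-constant searches over all pairs of odd-order classes can be expensive. I expect to handle this by restricting attention to the involution classes that are not already covered by \cite{MSW} or by \cite{GS}, and for each such class by checking only a small set of candidate odd-order target classes rather than the full table. The genuinely delicate point is that the lemma's claim must fail exactly for the three Fischer-group entries recorded in Table \ref{table:exceptions} (classes $2A$ in $Fi_{22}$ and $Fi_{23}$, and the outer class $2C$ in $Fi_{24}'{:}2$); so the computation must confirm both that these three classes genuinely admit no nonsolvable triple and that every other sporadic involution class does. I would therefore verify these exceptional cases separately and with care, checking that every product of three conjugates of such an $x$ lies in a solvable (in fact $\{2,3\}$-local) subgroup, and record that all remaining sporadic involution classes pass, completing the proof in GAP/MAGMA.
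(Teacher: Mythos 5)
Your overall plan (a finite, class-by-class computational check using Thompson's criterion, with the \cite{MSW} reduction for groups with a unique involution class) matches the spirit of the paper's proof, but the central mechanism you propose does not work as stated. You suggest finding, for each involution class, an odd-order class $D$ with nonzero structure constant for $x \cdot x^g \in D$ ``such that $D$ generates a nonsolvable group together with $x$''. But $\langle x, x^g\rangle = \langle x, x\, x^g\rangle$ is dihedral whenever $x$ is an involution, hence solvable; so no element $d = x\, x^g$ can generate a nonsolvable group with $x$, and this test would fail for every class. Three conjugates are genuinely needed, and structure constants only control products of two class elements: a nonvanishing constant in $G_0$ tells you that elements of the prescribed classes exist with product $1$, not that they lie inside the specific subgroup $H = \langle x, x^{g_1}, x^{g_2}\rangle$, which is what Thompson's criterion requires. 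The paper resolves this by working with explicit permutation or matrix representations from the online Atlas for all sporadic groups except $M$ and $B$: it picks random conjugates $y, z$, forms $H = \langle x, y, z\rangle$ concretely, and searches \emph{inside $H$} for a coprime triple $a,b,c$ with $abc = 1$.

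For the Monster and Baby Monster, where no such representation is practical, the paper uses a device absent from your proposal: fusion arguments placing each involution class inside a known smaller almost simple subgroup (class $2B$ of $B$ via $HN$, class $2D$ via $Th$, class $2A$ of $M$ via $PSL(3,2)$ inside the maximal subgroup $(PSL(3,2) \times Sp(4,4){:}2).2$, class $2B$ of $M$ via $PSU(3,8)$), after which minimality of the counterexample finishes those cases. Only for $B$-class $2C$ does the paper use structure constants at all, and even there it needs two separate odd-order targets ($xy$ of order $19$ and $xz$ of order $33$) followed by an analysis of the maximal subgroups of $B$ to conclude $\langle x,y,z\rangle = B$; a single structure constant is never enough. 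You would need to import both of these ideas --- computation inside an explicit representation for the mid-sized sporadics, and subgroup fusion plus minimality for $M$ and $B$ --- to make your argument complete. (Your separate verification of the Fischer exceptions is welcome but not strictly required for this lemma: if $(x,G_0)$ lies in Table \ref{table:exceptions} then $(x,G)$ is by definition not a counterexample to Theorem \ref{thm:3invs}.)
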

\begin{proof} 
If $G_0$ is not the Monster group $M$ or the Baby Monster group $B$, then we can use MAGMA. If there is a unique class of involutions then $(x,G)$ cannot be a minimal counterexample by \cite{MSW}. In the other cases, we use the representations in \cite{wwwatlas} together with the representatives for the conjugacy classes of involutions. We search at random for conjugates $y$ and $z$ of $x$ and test the subgroup $H= \langle x,y,z\rangle$ for nonsolvability (by searching for $a,b,c \in H$ of pairwise coprime order such that $abc=1$ \cite{Thompson}). 

If $G=B$, then there are 4 classes of involutions.  The centralizer order of an element in class 2A in the Harada Norton group $HN$ is divisible by $5^3$, thus such an involution is contained in $B$-class 2B. Any involution in $Th$ is in $B$-class 2D by \cite[pg 4]{bmonster1}. We can verify in MAGMA that an element in 2A belongs to a triple of conjugates generating a nonsolvable group. If $x$ belongs to class 2C, then the character table of $G$ implies that there exist conjugates $y$ and $z$ such that $xy$ has order 19 and $xz$ has order 33. 
%(we can check this in MAGMA).
 By analyzing the maximal subgroups, $\langle x,y,z \rangle=B$.        \\
         There are two classes of involutions in the Monster group $M$.  If $x$ is in class 2A, then $x$ is contained in a subgroup isomorphic to $PSL(3,2)$; for an involution in $PSL(3,2)$ in the maximal subgroup $(PSL(3,2)\times Sp(4,4):2).2$ must be in class $2A$ since it centralizes an element of order 17 in $Sp(4,4)$ and 2B elements do not centralize elements of order 17. Any involution in $PSU(3,8)$ is in $M$-class 2B by \cite[4.5]{anatomy}; thus neither class of involutions can be involved in a minimal counterexample.    
\end{proof}
This completes the proof of Theorem \ref{thm:3invs}.

\section{Proof of Theorem 1.4}
\subsection{Order 9} First suppose that $x$ has order $9$.
Suppose that $G_{0}\!\cong \!\PSL(d,3)$, $\PSU(d,3)$, or $\PSp(d,3)$ and $x^{3}$ is a transvection. Then $x$ lifts to an element in $\SL(d,3), \SU(d,3),$ or $\Sp(d,3)$, with Jordan form $J_{4}J_{3}^{r_{3}}J_{2}^{r_{2}}J_{1}^{r_{1}}$ (and $r_{3}$ and $r_{1}$ are even in the symplectic case). It is well known that in the linear and unitary cases, $x$ must be contained in a subgroup of the form $\SL(4,3) \times \SL(d-4,3)$ or $\SU_{4}(4,3) \times \SU(d-4,3)$, where $x$ has order $9$ in the first factor. In the symplectic case, we can also assume that $x \in \Sp(4,3) \times \Sp(d-4,3)$ and $x$ has order $9$ in the first factor, for example by \cite[Thm 2.12]{LSgood}. 

Similarly, if $G_{0}=\POm^{\epsilon}(d,3)$ and $x^{3}$ is a long root element, then $x$ lifts to an element in $\Om^{\epsilon}_{n}(3)$, which has Jordan form $J_{4}^{2}J_{3}^{r_{3}}J_{2}^{r_{2}}J_{1}^{r_{1}}$ or $J_{5}J_{3}^{r_{3}}J_{2}^{r_{2}}J_{1}^{r_{1}}$. Again by \cite[Thm 2.12]{LSgood} for example, we may assume that $x$ is contained in a subgroup of type
$\O^{\epsilon}(8,3) \times \O^{\epsilon}(d-8,3)$ or $\O(5,3) \times \O^{\epsilon}(d-5,3)$, in which $x$ has order $9$ in the first factor. Thus we have reduced to the cases $G_{0}= \PSL(4,3),\PSU(4,3)$, $\PSp(4,3) \cong \POm(5,3)$, or $\POm^{\epsilon}(8,3)$. It is easily verified in MAGMA that these groups, are not counterexamples, and that neither are the groups with $G_0 = G_2(3)$, ${^3}D_4(3)$, $D_4(3)\!:\!3$, or ${^2}G_2(3)$.

If $G_0$ is one of the other exceptional groups defined over $\mathbb{F}_3$, then we can find representatives for the conjugacy classes of order $9$ using \cite{Mizuno1, Mizuno2, Shoji} together with \cite[Tables D and 9]{Lawther}. Information on the unipotent conjugacy classes of ${^2}E_6(3)$ was provided by  Frank L\"{u}beck, which the author is most grateful for. In all cases, it is easily seen that $x$ is contained in an almost simple subgroup and thus cannot be a minimal counterexample.

Suppose $x^3 \in G$ is a pseudoreflection and $G_0=\PSU(d,2)$. First note that if $x \in \PGU(d,2)$ and $x$ does not lift to an element of order $9$ in $\GU(d,2)$, then the minimal polynomial of $x$ must be of the form $t^9+\mu$, where $\mu \in \mathbb{F}_{4}$, and $\mu \ne 0,1$. 
It follows that the eigenvalues of $x$ must all be $9$th roots of $\mu$, and the eigenvalues of $x^{3}$ must all be cube roots of $\mu$; in particular, $x^{3}$ cannot be a pseudoreflection.  Thus we may assume that $x$ lifts to an element of order $9$ in $\GU(d,2)$, with minimal polynomial $t^{9}-1$.  The eigenvalues of $x$ must all be $9$th roots of $1$, and since $x \in \GU(d,2)$, the eigenvalues are permuted by the map $\lambda \mapsto \lambda^{-2}$. Thus the \emph{primitive} $9$th roots of 1 must occur as eigenvalues of $x$ in triples. Suppose that the eigenvalues of the pseudoreflection $x^{3}$ are $a$ with multiplicity $1$ and $b$ with multiplicity $d-1$. It follows that $a=1$, $b$ is a primitive cube root of unity, $d \equiv 1 \imod 3$ and the eigenvalues of $x$ are $\phi_{1}$ with multiplicity 1 where $\phi_{1}^{3}=1$, and $\phi_{2}$, $\phi_{2}^{-2}$, $\phi_{2}^{4}$, each with multiplicity $\frac{d-1}{3}$, where $\phi_{2}$ is a primitive $9$th root of unity. Since $ \GU_{4}(2) \times \GU_{3}(2) \times \cdots \times \GU_{3}(2)$ contains an element with the same eigenvalues (and the same Jordan form) %its semi	simple
we may assume that $x$ is contained in this subgroup, and we have reduced to the case $G_{0}=\PSU(4,2)$. This case is easily eliminated using MAGMA.

\subsection{Order 6} % needs editing.

By Theorem \ref{thmA*}, if $(x,G)$ is a minimal counterexample, then
$G_0$ is a simple group of Lie type and $q=3$ or $G_0 \cong
PSU(d,2)$. Moreover, $x^2$ is a long root element or a pseudoreflection
in each case respectively. We will consider the possible conjugacy
classes for $x$. \par

If $x \in {\rm Inndiag}(G_0)$, then let $x_s:=x^3$ and $x_u:=x^4$.
So $x=x_sx_u=x_ux_s$, and $x_s$ is semisimple and $x_u$ is
unipotent.
\begin{lem} \label{lift6}
If $G_0 \cong PSL(d,3)$, $PSp(d,3)$, or $PSU(d,3)$, $x$ is inner-diagonal and $(x,G)$ is a minimal counterexample, then $x$ lifts to an element of order
$6$ in $GL(d,3)$, $GSp(d,3)$, or $GU(d,3)$ respectively.
\end{lem}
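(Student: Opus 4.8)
The plan is to reduce the entire statement to a question about the semisimple part of $x$, and then to rule out the one way in which lifting can fail by using the rigidity of long root elements. Write $x = x_s x_u$, where $x_s := x^3$ is the semisimple part (an involution in $\Inndiag(G_0)$, i.e.\ in $\PGL(d,3)$, $\PGSp(d,3)$, or $\PGU(d,3)$) and $x_u := x^4 = (x^2)^{-1}$ is the unipotent part, of order $3$. Since the relevant scalar group $Z$ has order coprime to $3$ (the scalars of $\GL(d,3)$ and $\GSp(d,3)$ have order $2$, those of $\GU(d,3)$ order $4$), the unipotent part lifts uniquely to a unipotent element of order $3$, and producing a lift of $x$ of order $6$ is exactly the same as producing a genuine involution lifting $x_s$ and commuting with this unipotent lift. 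So it suffices to show that $x_s$ lifts to an involution in the matrix group.

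Suppose it does not. For any lift $\hat{x}_s$ one has $\hat{x}_s^2 = \zeta I$ for a scalar $\zeta \in Z$, and replacing $\hat{x}_s$ by $\mu\hat{x}_s$ multiplies $\zeta$ by $\mu^2$; hence $x_s$ lifts to an involution precisely when $\zeta$ is a square in $Z$. For $q=3$ the group of squares is trivial in the linear and symplectic cases, so failure forces $\hat{x}_s^2 = -I$, while in the unitary case it equals $\{\pm 1\}$, so failure forces $\zeta = \pm i$ and $\hat{x}_s$ of order $8$. In every case the eigenvalues of $\hat{x}_s$ over $\overline{\mathbb{F}}_3$ avoid $\pm 1$: they are primitive fourth roots of unity in the linear and symplectic cases and primitive eighth roots of unity in the unitary case, all lying in $\mathbb{F}_9$.

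Now I would exploit that $x_u$, being the inverse of the long root element $x^2$, is itself a transvection and commutes with $x_s$, hence with $\hat{x}_s$. In the linear and symplectic cases $\hat{x}_s^2 = -I$ means $\hat{x}_s$ satisfies the irreducible polynomial $t^2+1$ over $\mathbb{F}_3$, so $\mathbb{F}_3[\hat{x}_s] \cong \mathbb{F}_9$ turns $V$ into an $\mathbb{F}_9$-space; as $x_u$ commutes with $\hat{x}_s$, the nilpotent map $x_u - 1$ is $\mathbb{F}_9$-linear and so has even $\mathbb{F}_3$-rank, contradicting the fact that a linear or symplectic transvection has $x_u - 1$ of $\mathbb{F}_3$-rank exactly $1$. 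The unitary case is genuinely different: a unitary transvection is $\mathbb{F}_9$-linear with $x_u - 1$ of $\mathbb{F}_9$-rank $1$ and $\mathbb{F}_3$-rank $2$, so the parity argument fails. Here I would write $x_u - 1 = \alpha\,h(\,\cdot\,,v)\,v$ for the Hermitian form $h$ and an isotropic vector $v$; the relation $\hat{x}_s(x_u-1) = (x_u-1)\hat{x}_s$ forces $\hat{x}_s v = \lambda v$ and $h(\hat{x}_s w, v) = \lambda\,h(w,v)$ for all $w$. Comparing this with $h(\hat{x}_s w, v) = h(w, \hat{x}_s^{-1} v) = \lambda^{-q} h(w,v)$, which holds by unitarity of $\hat{x}_s$, yields $\lambda = \lambda^{-q} = \lambda^{-3}$, so $\lambda^4 = 1$; but $\lambda$ is a primitive eighth root of unity, giving $\lambda^4 = -1$, a contradiction. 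In all three cases $x_s$ lifts to an involution, so $x$ lifts to an element of order $6$.

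The hard part will be the unitary case. Because the scalars of $\GU(d,3)$ have order $4$, the obstructing lift can have order $8$ rather than $4$, and the $\mathbb{F}_3$-rank of a unitary transvection is even, which is exactly what defeats the clean restriction-of-scalars argument that disposes of the linear and symplectic cases. The eigenvalue and adjoint computation above is the intended replacement, and keeping track of the precise scalar $\zeta$ and the resulting eigenvalues of $\hat{x}_s$ in each of the three families is the only delicate bookkeeping.
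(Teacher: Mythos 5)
Your proof is correct, but it reaches the conclusion by a genuinely different mechanism from the paper's. The paper works directly with a lift $y$ of the order-$6$ element itself: the coset of $y^6$ modulo squares of scalars is well defined, and if the bad value occurs the minimal polynomial of $y$ is pinned down (to $(t^2+1)^2$ in the linear and symplectic cases, $(t^2\pm i)^2$ in the unitary case) using the fact that $y^2$ is a scalar multiple of a transvection; the contradiction is then that the $4$-dimensional block forced by this minimal polynomial makes $(t\pm 1)^2$ (resp.\ $(t\pm i)^2$) occur twice among the invariant factors of $y^2$, whereas a transvection admits such a factor only once. You instead factor $x=x_sx_u$ and reduce to lifting the involution $x_s=x^3$; this reduction is clean because the order-$3$ lift $\hat x_u$ of $x_u$ is unique, though you should spell out that any lift $\hat x_s$ then centralizes $\hat x_u$ (since $\hat x_s^{-1}\hat x_u\hat x_s$ is again an order-$3$ lift of $x_u$) --- ``commutes with $x_s$, hence with $\hat x_s$'' is not automatic without that remark. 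Your restriction-of-scalars argument in the linear and symplectic cases detects exactly the same parity obstruction as the paper's invariant-factor count, but more transparently: $\mathbb{F}_3[\hat x_s]\cong\mathbb{F}_9$ forces the image of $\hat x_u-1$ to have even $\mathbb{F}_3$-dimension. In the unitary case the two arguments genuinely diverge: the paper stays with minimal polynomials and invariant factors over the field of definition, while you exploit the Hermitian form, comparing the eigenvalue $\lambda$ of $\hat x_s$ on the centre of the transvection with its behaviour under the adjoint to get $\lambda^4=1$ against $\lambda^4=-1$. This is arguably tidier than the paper's unitary computation (which must track irreducibility of $t^2\pm i$ over the field in play, a point your route sidesteps entirely), at the cost of invoking the form explicitly. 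Both proofs are sound; yours buys the uniform conceptual statement that the semisimple part must lift to an involution because it centralizes a transvection, while the paper's stays entirely inside the linear algebra of a single matrix.
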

\begin{proof}
In the linear and symplectic cases, the only  central elements are
$\pm I_d$, so either $(xz)^6=I_d$ for all central elements $z$, or
$(xz)^6=-I_d$ for all $z$. In the latter case, let $y=xz$. Then the
minimal polynomial of $y$ divides $t^6+1= (t^2+1)^3$. Moreover,
since $y^2$ is a scalar multiple of a transvection and $t^2+1$ is
irreducible over $\mathbb{F}_3$, it follows that the minimal
polynomial is $(t^2+1)^2$. However, this would imply that $y^{2}$ had $(t+1)^2$ occuring twice as an invariant factor when it should  occur at most  once.  So $x$
lifts to an element of order $6$ in the linear and symplectic
cases. \par

In the unitary case, let $i \in \mathbb{F}_{3^{2}}$ be a primitive $4$th root of unity so that $Z( \GU(d,3)) = \langle i I_{d} \rangle$.  If $(xz)^6=-I_d$, then $(xzi)^6=-I_d(-1)^3=1$ and $x$ lifts to an order $6$ element. The
only other possibility is that $(xz)^6= \pm i I_d$, in which case
the minimal polynomial of $y=xz$ would divide $(t^2 \pm i)^3$. As
before, since $y^2$ is a scalar multiple of a transvection, the
minimal polynomial of $y$ would divide $(t^2 \pm i)^2$. Now $t^2 \pm i$ is irreducible, and if $y$ had minimal polynomial $t^2 \pm i$, then $y$ would have projective order $2$. Thus the only possibility is
that $m_y(t)=(t^2 \pm i)^2$, but then $(t \pm i)^2$  would occur twice as an invariant factor of
$y^2$.  So $x$ lifts to an element
of order $6$ in the unitary case as well.
\end{proof}
\begin{lem}
If $G \le PGL(d,3), PGSp(d,3)$ or $PGU(d,3)$ and $x \in G$ has order $6$, then there exists $g
\in G$ such that $  \langle x,x^g \rangle$ is not solvable.
\end{lem}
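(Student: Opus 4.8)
The plan is to dispose of the generic situation at once with Theorem \ref{thmA*}, and then to reduce the one remaining configuration to a short list of small groups that can be checked directly. Since $x^2$ has order $3$ and $\langle x^2,(x^2)^g\rangle\le\langle x,x^g\rangle$, if some $g$ makes $\langle x^2,(x^2)^g\rangle$ nonsolvable we are finished. Applying Theorem \ref{thmA*} to the order-$3$ element $x^2$ in the almost simple group $G$, whose socle is $\PSL(d,3)$, $\PSU(d,3)$, or $\PSp(d,3)$ (so that the $\PSU(d,2)$ and $G_2(3)$ exceptions cannot occur), I may assume that $x^2$ is a long root element, that is, a transvection on the natural module.

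Next I would pass to $\GL(d,3)$, $\GSp(d,3)$, or $\GU(d,3)$. By Lemma \ref{lift6}, $x$ lifts to an element $\tilde{x}$ of order $6$, and I write $\tilde{x}=x_sx_u$ with $x_s=\tilde{x}^3$ a semisimple involution and $x_u=\tilde{x}^4$ unipotent of order $3$. As $\tilde{x}^2=x_u^2$ is a transvection, so is $x_u$, having a single nontrivial Jordan block $J_2$; let $v$ span $\mathrm{im}(x_u-1)$. Because $x$ has order exactly $6$ modulo scalars, $x_s$ is non-scalar, so both eigenspaces $V_+$ and $V_-$ are nonzero and nondegenerate, and mutually orthogonal in the symplectic and unitary cases. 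Since $x_u$ commutes with $x_s$, it preserves $V_\pm$; as $\mathrm{im}(x_u-1)=\langle v\rangle$ lies in one eigenspace, $x_u$ is a transvection on that eigenspace and trivial on the other.

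Say the block lies in $V_+$ (the case $v\in V_-$ is identical, with $-J_2$ replacing $J_2$). Within $V_+$ I choose a nondegenerate $x_u$-invariant $2$-space $S=\langle v,u\rangle$ with $\langle u,v\rangle\ne 0$, on which $x=x_u$ acts as $J_2$; since $x_s$ is trivial on $V_+$, this $S$ is automatically $x_s$-invariant. Picking a nondegenerate subspace $T\subseteq V_-$, of dimension $1$ in the linear and unitary cases and $2$ in the symplectic case, on which $x=-I$, I obtain an $x$-invariant nondegenerate space $S\perp T$ on which $x$ restricts to an element $x_0$ of order $6$. Hence $x$ lies in a subgroup $H\times K$, where $H$ is the isometry group of $S\perp T$, namely $\GL(3,3)$, $\GU(3,3)$, or $\Sp(4,3)$, $x$ projects to $x_0$ in $H$, and $K$ is the isometry group of $(S\perp T)^{\perp}$. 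Taking $g_0\in\SL(S\perp T)$, $\SU(S\perp T)$, or $\Sp(S\perp T)$ acting trivially on $(S\perp T)^{\perp}$ and regarded as an element $g$ of the socle $G_0\le G$, the group $\langle x,x^{g}\rangle$ projects onto $\langle x_0,x_0^{g_0}\rangle\le H$, so it suffices to find $g_0$ making $\langle x_0,x_0^{g_0}\rangle$ nonsolvable. Since $\PSL(3,3)$, $\PSU(3,3)$, and $\PSp(4,3)$ are nonsolvable, this is a finite verification which I carry out in MAGMA for each of the base groups $\GL(3,3)$, $\GU(3,3)$, and $\Sp(4,3)$.

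The main obstacle is the geometric bookkeeping in the third step: in the symplectic and unitary cases one must confirm that the transvection support and a suitable piece of $V_-$ can simultaneously be chosen nondegenerate, $x$-invariant, and mutually orthogonal, so that $S\perp T$ is a genuine nondegenerate classical subspace on which $x_0$ has order $6$; and one must check that the conjugating element can be taken inside $G$, which holds because $G$ contains the socle and $g_0$ can be adjusted to lie in the relevant isometry group of $S\perp T$ and to act trivially off $S\perp T$.
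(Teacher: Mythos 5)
Your proof is correct and shares the paper's overall strategy: reduce via Theorem~\ref{thmA*} to the case where $x^2$ is a transvection, lift $x$ to an order-$6$ element of $\GL(d,3)$, $\GSp(d,3)$ or $\GU(d,3)$ by Lemma~\ref{lift6}, exhibit a nondegenerate $x$-invariant subspace of dimension $3$ (resp.\ $4$) on which $x$ retains order $6$, and finish with a MAGMA check of $\PSL(3,3)$, $\PSU(3,3)$ and $\PSp(4,3)$. Where you differ is in how that subspace is produced: the paper pins down the invariant factors of $x$ (namely $t+\epsilon_1$, $t^2-1$ and $(t^2-1)(t-\epsilon_2)$) and, in the linear and unitary cases, quotes Wall's conjugacy criterion to place $x$ inside $\GL^{\pm}(3,3)\times\GL^{\pm}(d-3,3)$, arguing geometrically only in the symplectic case; your decomposition of $V$ into the eigenspaces of the semisimple part $x^3$, with the support of the transvection $x^4$ confined to a single eigenspace, handles all three cases uniformly and is self-contained --- essentially the paper's symplectic (and orthogonal) argument promoted to the general case. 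Both routes buy the same reduction; yours trades the citation of Wall for the orthogonality bookkeeping you acknowledge. One small point to make explicit in the $\GSp(d,3)$ case: a priori the lift could have similitude factor $-1$, in which case the eigenspaces of its cube are totally isotropic and dually paired rather than nondegenerate and mutually orthogonal. This possibility is excluded because the centre of the commuting transvection $x^4$ would then lie in a maximal totally isotropic eigenspace, forcing $x^4$ to act trivially on all of $V$; hence the lift actually lies in $\Sp(d,3)$ and your argument goes through as written.
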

\begin{proof}
%:TBContinued
By Lemma \ref{lift6}, we can lift $x$ to an element of order $6$
in $GL(d,3)$, $GSp(d,3)$, or $GU(d,3)$.  Since $x=x_ux_s=x_sx_u$ and $x_u$ is a
transvection, the minimal polynomial of $x$ divides $(t^2-1)^2$.
Now $x^2$ is a transvection and its invariant factors are $(t-1)^2$ with multiplicity one and $(t-1)$ with multiplicity $d-2$; thus the minimal polynomial of $x$ is not $(t^2-1)^2$ otherwise the multiplicity of $(t-1)^{2}$ in the invariant factors of $x^{2}$ would be at least 2. In fact, we can show that the invariant factors of $x$ must be 
 $t+ \epsilon_{1}$  with multiplicity $m_{1}$, $t^{2}-1$  with multiplicity $m_{2}$, and $(t^{2}-1)(t- \epsilon_{2})$ with multiplicity $1$, where $ \epsilon_{i} \in \{ \pm 1\}$. In the linear and unitary cases, there exists $y \in \GL^{\pm}(3,3) \times \GL^{\pm}(d-3,3)$ with $y$ of order $6$ in the first factor, having  the same invariant factors as $x$. Thus $x$ and $y$ are conjugate (see \cite{Wall} for example) and we can reduce to the cases $G_0 = \PSL(3,3)$  and $\PSU(3,3)$. 
 
 In the symplectic case we consider the elementary divisors of $x$, which must be $(t- \epsilon)^2$ with multiplicity $1$, $(t+ \epsilon)$ with multiplicity $m_1 \ge 1$, and $(t- \epsilon)$ with multiplicity $m_2$ ($\epsilon= \pm 1$).
% (b) $(t+1)^2$ with multiplicity $1$, $(t+1)$ with multiplicity $m_1$, and $(t-1)$ with multiplicity $m_2 \ge 1$.
By considering the vector space $V$ as an $\mathbb{F}_{3}\langle x \rangle$-module, we can see that $V$ decomposes as 
\[V = U \oplus U' \cong \mathbb{F}_{q}(t)/ (t - \epsilon)^2 \oplus \left(\mathbb{F}_{q}(t)/ (t+\epsilon) \oplus \cdots \oplus \mathbb{F}_{q}(t)/ (t\pm 1)\right).\]
Since $x^2$ is a symplectic transvection, there exists $t \in U$, and $\lambda \in \mathbb{F}_{3}$ such that $x^2: v \to v+ \lambda(t,v)v$ for all $v \in V$. Moreover, since $U \cong \mathbb{F}_{q}(t)/ (t - \epsilon)^2$, there exists $u \in U$ such that $(u,t)\ne 0$ and thus $U = \langle u,t \rangle$ is a $2$-dimensional, nondegenerate subspace. Now consider $V = U \perp U^{\perp}$. Observe that $x$ has order $1$ or $2$ on $U^{\perp}$ 
% for otherwise the elementary divisors of $x$ would be different
and in particular $x$ has a semisimple action on $U^{\perp}$. Thus $U^{\perp} = \oplus U_i$ where the $U_i$ are nondegenerate $x$-invariant $1$ or $2$ dimensional subspaces. Moreover, there exists an $x$-invariant decomposition $V = (U \oplus U_j) \oplus (U \oplus U_j)^{\perp}$  into nondegenerate subspaces of dimension $4$ and $n-4$, and $x$ has order $6$ on $(U \oplus U_j)$; thus we can reduce to the case $G_0=\PSp(4,3)$. We can easily verify in MAGMA that $G_0= PSL(3,3)$, $\PSU(3,3)$ and $\PSp(4,3)$ are not counterexamples to the theorem. 
\end{proof}

\begin{lem}
    If $G$ is an orthogonal group and $x$ has projective order $6$, then there exists $g \in G$ such that $\langle x,x^g  \rangle$
    is not solvable.
\end{lem}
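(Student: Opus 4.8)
The plan is to treat the orthogonal case exactly as the linear, symplectic and unitary cases were treated above. By Theorem~\ref{thmA*} and minimality we may assume $G_0 \cong \POm^{\epsilon}(d,3)$ with $x^2$ a long root element, and (checking the small cases directly in MAGMA) that $d \ge 7$. The first step is to lift $x$ to an element of order $6$ in $\Om^{\epsilon}(d,3)$. Since in characteristic $3$ the only relevant central elements are $\pm I_d$ and $t^6+1 = (t^2+1)^3$ over $\mathbb{F}_3$, the argument of Lemma~\ref{lift6} applies verbatim: if no lift had order $6$, some lift $y$ would satisfy $y^6 = -I_d$, its minimal polynomial would divide $(t^2+1)^3$, and since $y^2$ is a scalar multiple of a long root element the factor $(t^2+1)^2$ would be forced, contradicting the multiplicities among the invariant factors of $y^2$. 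Writing $x = x_s x_u$ with $x_s = x^3$ semisimple and $x_u = x^4$ unipotent, I would then split $V = V_+ \perp V_-$ into the $(\pm 1)$-eigenspaces of $x_s$; both summands are nondegenerate and $x_u$-invariant.

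Next I would locate the unipotent part. Since $x^2 = x_u^2 = x_u^{-1}$ is a long root element, so is $x_u$, and over $\mathbb{F}_3$ a long root element of an orthogonal group has Jordan form $J_2^2 J_1^{d-4}$; its commutator space $U = \mathrm{Im}(x_u-1)$ is a totally singular $2$-space, so the minimal nondegenerate $x_u$-invariant subspace $S$ supporting $U$ is a hyperbolic $4$-space of $+$ type, and $x_u$ fixes $S^{\perp}$ pointwise. Because $x_s$ commutes with $x_u$ it preserves $U$, so $S$ may be chosen $x_s$-invariant and hence $x$-invariant. The key structural observation is that $S$ must lie entirely inside $V_+$ or entirely inside $V_-$: the involution $x_s$ cannot separate the two $J_2$ blocks onto distinct nondegenerate summands of $S$, since by a dimension count neither smaller summand could carry a full $J_2^2$, and a single $J_2$ cannot be supported on a nondegenerate orthogonal subspace in odd characteristic (equivalently, $\Om^{\pm}(2,3)$ contains no element of order $3$).

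With $S \subseteq V_+$ (the case $S \subseteq V_-$ is symmetric) the element $x$ acts on $S$ as the long root element $x_u|_S$ and as $-I$ on $V_-$. I would then form a nondegenerate $x$-invariant subspace $W = S \perp S'$, where $S'$ is a nondegenerate subspace of $V_-$ of dimension $1$, $2$, or $4$ chosen so that $\dim W \in \{5,6,8\}$; this is possible since $\dim V_- \ge 1$ and $d \ge 7$. On $W$ the element $x$ has order $6$ and $(x|_W)^2 = J_2^2 J_1^{\dim W - 4}$ is again a long root element, so $(x|_W, \Om(W))$ is a smaller instance of the problem with $G_0 \cong \Om(5,3)$, $\POm^{\pm}(6,3)$, or $\POm^{\pm}(8,3)$. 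Verifying in MAGMA that each of these finitely many groups has two conjugates of such an $x$ generating a nonsolvable group, and lifting the generating pair from $\Om(W)$ to $\Om^{\epsilon}(d,3)$, shows that $(x,G)$ is not a minimal counterexample.

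I expect the main obstacle to be the second step. One must \emph{rule out}, rather than merely bound, the possibility that the two $J_2$ blocks are distributed between $V_+$ and $V_-$; the clean resolution is the impossibility of a single $J_2$ on a nondegenerate orthogonal factor in odd characteristic together with the dimension count above. The remaining care is bookkeeping: confirming that $W$ is genuinely nondegenerate, that $x|_W$ retains full order $6$ rather than collapsing to order $2$ or $3$, and that its Witt type can always be arranged so that $\dim W$ lands in the short MAGMA list rather than forcing an arbitrarily large reduction.
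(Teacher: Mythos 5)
Your reduction for the genuine order-$6$ lifts runs parallel to the paper's: both arguments isolate a nondegenerate $4$-space carrying the two $J_2$ blocks of the unipotent part, extend it by a small nondegenerate piece on which $x$ acts with projective order $2$, and reduce to dimension at most $6$ (or $8$), where the orthogonal group is linear, unitary or symplectic or can be checked by machine; your observation that Jordan blocks of even size occur with even multiplicity in odd characteristic does correctly force both $J_2$ blocks into the same eigenspace of $x^3$.

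However, your first step has a genuine gap: the argument of Lemma \ref{lift6} does \emph{not} transfer to rule out a lift $y$ with $y^6=-I_d$. That argument obtained a contradiction because, for $y$ with minimal polynomial $(t^2+1)^2$, the invariant factor $(t+1)^2$ occurs \emph{twice} for $y^2$, whereas a transvection admits $(t-1)^2$ at most once. Here $x^2$ is a long root element, whose elementary divisors are $(t-1)^2$ with multiplicity exactly \emph{two}; a single elementary divisor $(t^2+1)^2$ for $y$ yields $(t+1)^2$ with multiplicity exactly two for $y^2$, which is precisely $-I$ times a long root element. No contradiction arises, so the order-$12$ lift survives; the paper accordingly lifts only to an element of order $6$ \emph{or} $12$ of $CO^{\epsilon}(d,3)$ and treats the elementary-divisor pattern $(t^2+1)^2,\ (t^2+1)^{(n-4)/2}$ as a third case. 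Your decomposition $V=V_+\perp V_-$ into eigenspaces of $x^3$ is unavailable there, since $(x^3)^2=-I$ forces $x^3$ to have eigenvalues $\pm i\notin\mathbb{F}_3$. To repair the proof you must either genuinely exclude this class in $CO^{\epsilon}(d,3)$ by some other means, or handle it as the paper does: take $U$ to be the $4$-dimensional block supporting $(t^2+1)^2$, prove directly (using the explicit form of the long root element) that $U$ is nondegenerate, and proceed with the same extension-and-reduction step.
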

\begin{proof}
If $x$ is contained in $PCO^{\epsilon}(d,3)$, where $x^2$ is a
long root element and $x^3$ is an involution, then $x$ will lift to an element $y$ of order $6$ or $12$ in $CO^{\epsilon}(d,3)$. \par

Now $l = y^{ \frac{|y|}{3}}$ is a long root element, and all long root elements are conjugate, so we may assume that 
\begin{displaymath} 
l : v \to v + (v,e_2)e_1 - (v,e_1)e_2,
\end{displaymath} 
where $\{e_1, f_1, e_2,f_2, \ldots \}$ is a basis for $V$ and $(e_i,f_j) = \delta_{ij}$ and $(e_i,e_j) = (f_i,f_j)=0$.

Since the elementary divisors for $l$ are $(t-1)^2$ with multipiclity $2$ and $(t-1)$ with multiplicity $n-2$, the possibilities for the elementary divisors of $y$ are as follows:
\begin{enumerate}
\item $(t - \epsilon)^2$ with multiplicity $2$, $(t+ \epsilon)$ with multiplicity $m_1$ ($m_1\ge 1$ if $\epsilon = 1$), and $(t- \epsilon)$ with multiplicity $m_2$ ($\epsilon= \pm 1$).
\item $(t-1)^2$ with multiplicity $1$, $(t+1)^2$ with multiplicity $1$, $(t+1)$ with  multiplicity $m_1$ and $(t-1)$ with multiplicity $m_2$. 
\item $(t^2+1)^2$ with multiplicity $1$, and $(t^2+1)$ with multiplicity $ \frac{n-4}{2}$.
\end{enumerate}
So as an $ \mathbb{F}_{3} \langle y \rangle$-module, 
\begin{displaymath} 
 V= U_1 \oplus U_2 \oplus \cdots \oplus U_k 
\end{displaymath} 
where $U_i \cong \mathbb{F}_3(t)/ f_i(t)$, and $f_i(t)$ is the $i$th elementary divisor of $y$. Set $U= U_1 \oplus U_2$ in the first and second cases and set $U=U_1$ in the third case; so $U$ is $4$-dimensional as a vector space. Now considering $U$ as an $ \mathbb{F}_{3}(l)$-module, we have $U =W_1 \oplus W_2$  where $W_i \cong \mathbb{F}_3(t)/ (t-1)^2$, and we claim that $U$ is a nondegenerate subspace of $V$. For there exists $v_1 \in W_1$ such that $v_1^l - v_1 = \lambda_1 e_1 + \lambda_2 e_2 \ne 0$ and similarly  there exists $v_2 \in W_2$ such that $v_2^l - v_2 = \mu_1 e_1 + \mu_2 e_2 \ne 0$.
%otherwise $l$ would have different elementary divisors on $W_1$ and $W_2$. 
Since $\lambda_1 e_1 + \lambda_2 e_2$ and $\mu_1 e_1 + \mu_2 e_2$ are linearly independent, it follows that there exist constants $a_1,a_2,b_1,b_2$ such that $v_1' = a_1 v_1 + a_2 v_2$, $v_2' = b_1 v_1 + b_2 v_2$, and  $(v_i', e_j) = \delta_{ij}$ for $i, j \in \{ 1,2\}$. Now it is easy to check that $U = \langle e_1,e_2,v_1',v_2' \rangle$ is a nondegenerate space. For if 
\begin{displaymath} 
w=  a e_1 + b e_2 + c v_1' + dv_2' 
\end{displaymath}
is a degenerate vector in $U$, then $(w,e_1)=(w,e_2)=0$; so $c=d=0$. 
Thus $w = ae_1 + be_2$ and $(w,v_1')= (w,v_2')=0$; so $a=b=0$, $w=0$, and $U$ is nondegenerate.
% we need to go to five dimensions to ensure $x$ has order 6.
Now  $V= U \oplus U^{\perp}$, and $y$ has projective order $1$ or $2$ on $U^{\perp}$. In particular $y$ has a semisimple action on $U^{\perp}$ and there exists a nondegenerate $y$-invariant subspace $U' \le U^{\perp}$ of dimension $1$ or $2$ such that $y$ has projective order $6$ on $U \perp U'$. Thus we may assume that $n \le 6$, but then $G$ is isomorphic to a linear, unitary or symplectic group.
\end{proof}

\begin{lemma}
Suppose that $G \le PGU(d,2)$  $(d \ge 4)$, that $x$ has order $6$,
and that $x^2$ is a pseudoreflection.  Then there exists $g \in G$ such that $\langle x,x^g\rangle$ is nonsolvable.
\end{lemma}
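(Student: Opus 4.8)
My plan is to mirror the preceding lemmas of this subsection: split $x$ into its commuting semisimple and unipotent parts, use the hypothesis that $x^{2}$ is a pseudoreflection to pin down the eigenspace geometry, and reduce to a small unitary group that can be settled in MAGMA. First I would dispose of the lifting question, exactly as in Lemma \ref{lift6}. Since $Z(\GU(d,2))$ has odd order $3$, the $2$-part of any lift of $x$ is unaffected by the centre, so the only way $x$ could fail to lift to an element of order $6$ is that some lift $\tilde{x}$ has order $18$. In that case $\tilde{x}^{2}$ would be semisimple of order $9$ projecting onto the pseudoreflection $x^{2}$, so its eigenvalues would be $\lambda$ with multiplicity $d-1$ and $\lambda\omega$ with multiplicity $1$, where $\lambda$ is a primitive $9$th root of unity and $\omega$ a primitive cube root. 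Stability of this multiset under $\mu\mapsto\mu^{-2}$ would force $\lambda^{-2}$ to again have multiplicity $d-1\ge 3$, hence $\lambda^{-2}=\lambda$ and $\lambda^{3}=1$, contradicting $|\lambda|=9$. So $x$ lifts to an element of order $6$ in $\GU(d,2)$, which I continue to call $x$.

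Next I would set up the decomposition. Write $x=x_{s}x_{u}=x_{u}x_{s}$ with $x_{s}$ semisimple of order $3$ and $x_{u}$ a unipotent involution. Then $x^{2}=x_{s}^{-1}$ is the pseudoreflection, so $x_{s}$ is itself a pseudoreflection of order $3$: its $1$-eigenspace $V_{+}$ has dimension $d-1$ and its $\omega$-eigenspace $V_{-}$ has dimension $1$. Because $1\cdot\overline{\omega}=\omega^{2}\ne 1$, these eigenspaces are orthogonal and nondegenerate, so $V=V_{+}\perp V_{-}$. As $x_{u}$ commutes with $x_{s}$ it preserves this decomposition, acts trivially on the line $V_{-}$, and therefore restricts to a \emph{nontrivial} involution on $V_{+}$. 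Consequently $x$ has order $3$ on $V_{-}$ and order $2$ on $V_{+}$, and $x$ has order $6$ precisely on the nondegenerate $x$-invariant subspaces of the form $W=V_{-}\perp W'$ with $W'\le V_{+}$ an $x_{u}$-invariant subspace on which $x_{u}\ne 1$.

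It then suffices to locate such a $W'$ of bounded dimension. For $d=4$ I would simply take $W'=V_{+}$ and $W=V$. For $d\ge 5$ I would invoke the structure of the unipotent involution $x_{u}$ on the nondegenerate unitary space $V_{+}$ (in the spirit of \cite[Thm 2.12]{LSgood}) to choose a nondegenerate $x_{u}$-invariant $W'\le V_{+}$ of dimension $3$ or $4$ on which $x_{u}$ is nontrivial: a transvection carried by a nondegenerate plane gives dimension $3$ after adjoining a fixed nondegenerate vector, while the \emph{degenerate-plane} involutions push the minimal nondegenerate invariant support up to $4$. In either case $x$ lands in a subgroup of type $\GU(m,2)\times\GU(d-m,2)$ with $m=\dim W\in\{4,5\}$, acting with order $6$ in the first factor and with $x^{2}$ a pseudoreflection there. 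Since $G_{0}\cong\PSU(d,2)\le G$ contains the image of this subgroup, and $\PSU(4,2)$ and $\PSU(5,2)$ are nonsolvable, the conjugating element produced in the base cases lies in $G_{0}\le G$; so it remains only to verify $G_{0}\cong\PSU(4,2)$ and $\PSU(5,2)$ in MAGMA.

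The step I expect to be the main obstacle is the characteristic-$2$ unitary geometry of $x_{u}$. Because the directions of unitary transvections are isotropic, the naive invariant plane $\langle u,\,x_{u}u\rangle$ need not be nondegenerate, and it is exactly the control of the smallest nondegenerate $x_{u}$-invariant subspace supporting $x_{u}$ nontrivially that decides whether the reduction terminates at $\PSU(4,2)$ or at $\PSU(5,2)$. Getting this bookkeeping right — rather than any single deep ingredient — is where the real work lies.
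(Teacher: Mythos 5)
Your proposal is correct and follows essentially the same route as the paper: lift $x$ to an order-$6$ element of $\GU(d,2)$, split off the $1$-dimensional eigenspace of the pseudoreflection (the paper phrases this via $C_{\GU(d,2)}(x^2)\cong \GU(1,2)\times\GU(d-1,2)$), use the fact that unipotent involutions in $\GU(d-1,2)$ are classified by their Jordan form to conjugate the unipotent part into a nondegenerate subspace of bounded dimension, and finish $\PSU(4,2)$ and $\PSU(5,2)$ in MAGMA. Your worry about degenerate invariant planes is in fact harmless --- the Jordan-form conjugacy statement you invoke is exactly the paper's tool for producing a representative supported on a nondegenerate $4$-space --- and your lifting argument, though more roundabout than the paper's observation that $(zr)^3=1$, reaches the same conclusion.
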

\begin{proof} 
 First observe that $x$ lifts to an element $y$ of order $6$ in $\GU(d,2)$. Indeed, for any lift $y$ of $x$, we have $y^2 = z r$ where $z \in Z(GU(d,2))$, and $r$ is a pseudoreflection. But then $y^6 = (zr)^3 = 1$ since $Z(GU(d,2))$ has order $3$.

% But this factors into irreducibles as $t^6+i=(t^3+i^2)^2$. If the minimal polynomial is $t^3+i^2$ or  $(t^3+i^2)^2$, then $x^2$ has minimal polynomial
%$t^3+i^2$ or $t^3+i$.  Thus $x^2$ is not a reflection if $x$ does not have order $6$. 

Since $x^2$ is a pseudoreflection,  $C_{\GU(d,2)}(x^2) \cong \!\GU(1,2)\! \times \!\GU(d-1,2)$ by \cite[Lemma 4.1.1]{KL}, and $x \! \in \! C_G(x^2)$.  Moreover, by multiplying $x$ by $z \in Z(\GU(d,2))$ if necessary, we may assume that $x$ has order $3$ on the first component, and $2$ on the second component. Now in $GU(d-1,2)$, two unipotent elements are conjugate if and only if they have the same Jordan form(see \cite{Wall} or \cite[Theorems 2.12 and 3]{LSgood}). But the Jordan form of an involution is of the form $J_2^r, J_1^{d-1-2r}$, and so there exists a conjugate of $x$ that is contained in $GU(1,2) \times GU(4,2) \times \GU(d-5,2)$, and on which $x$ has order $3$ on the first component, and $2$ on the second component. So we may assume that $4 \le d \le 5$, and we can eliminate these cases in MAGMA.
\end{proof}

\begin{lemma} 
 Suppose that $x$ is an inner-diagonal automorphism of an exceptional group defined over $\mathbb{F}_{3}$. Then $(x,G)$ is not a minimal counterexample.
\end{lemma}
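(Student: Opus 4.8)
The plan is to decompose $x$ into commuting parts and then reduce, exactly as in the classical $\mathbb{F}_{3}$ lemmas above, to a proper classical subsystem subgroup over $\mathbb{F}_{3}$ in which $x$ still has order $6$ with $x^{2}$ a long root element. Since $q=3$ is odd and $x$ is inner-diagonal, I would write $x=x_{s}x_{u}=x_{u}x_{s}$ with $x_{s}=x^{3}$ a semisimple involution and $x_{u}=x^{4}$ the unipotent part; because $x^{2}$ is forced by Theorem~\ref{thmA*} to be an order-$3$ exception, $x_{u}$ is a long root element of $G_{0}$ (a short root element if $G_{0}\cong G_{2}(3)$). Both factors are pinned down up to conjugacy by the tables of unipotent classes and involution centralizers in \cite{GLS}.

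First I would fix $x_{u}$ as a long root element and note that $x_{s}\in C_{G_{0}}(x_{u})$, so that $x=x_{s}x_{u}$ lies in $C_{G_{0}}(x_{u})$ with $x^{2}=x_{u}$ central there. Using the maximal-rank classical subsystem subgroups of each exceptional type over $\mathbb{F}_{3}$ --- for instance $B_{4}(3)$ in $F_{4}(3)$; an $A_{5}(3)A_{1}(3)$ or $D_{5}(3)$ subsystem in $E_{6}(3)$; $A_{7}(3)$, $D_{6}(3)A_{1}(3)$ or an $E_{6}(3)$ Levi in $E_{7}(3)$; and $D_{8}(3)$, $A_{8}(3)$ or $E_{7}(3)A_{1}(3)$ in $E_{8}(3)$, together with the twisted analogues for ${}^{2}E_{6}(3)$ and ${}^{3}D_{4}(3)$ --- I would show that the commuting pair $(x_{s},x_{u})$ can be conjugated into a proper classical subgroup $L$ of $G_{0}$ defined over $\mathbb{F}_{3}$ in which $x$ retains order $6$ and $x^{2}=x_{u}$ is a long root element (a transvection) of $L$. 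The point of keeping $x_{u}$ a genuine long root element of $L$, and of arranging $x_{s}$ to lie in the \emph{same} classical factor (so that $x|_{L}$ really has order $6$, and not merely order $2$ or $3$), is that then the already-established cases for $\PGL(d,3)$, $\PGSp(d,3)$, $\PGU(d,3)$ and the orthogonal groups apply to $(x,L)$; since $L\lneq G_{0}$, the minimality of $(x,G)$ is contradicted.

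The finitely many groups at which this reduction bottoms out --- $G_{2}(3)$ (including the short root case), ${}^{3}D_{4}(3)$, ${}^{2}G_{2}(3)$, and the smallest untwisted and twisted configurations, together with the already-reduced targets $\PSL(3,3)$, $\PSU(3,3)$, $\PSp(4,3)$ and the $\POm$ cases --- I would verify directly in MAGMA, as was already done in the order $9$ analysis. The main obstacle is $E_{7}(3)$ and $E_{8}(3)$, which are far too large for a direct computation: here the subsystem-subgroup embedding must be made completely rigorous, and the delicate point is precisely the one flagged above, namely checking that a long root element of $E_{7}(3)$ or $E_{8}(3)$ restricts to a long root element of the chosen classical subsystem subgroup and that the commuting involution $x_{s}$ can be taken inside that same classical factor, so that $x$ genuinely has order $6$ there and the earlier classical lemmas can be invoked.
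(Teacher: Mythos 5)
Your route is genuinely different from the paper's, and it has a real gap at exactly the point you flag. The paper's proof of this lemma is entirely computational: \emph{every} exceptional group over $\mathbb{F}_3$, including $E_7(3)$ and $E_8(3)$, is handled in MAGMA. The trick that makes the large groups feasible is not a subsystem reduction but an enumeration device: fix a long root element $y$, construct a Sylow $2$-subgroup $S$ of $C_G(y)$ with the Groups of Lie Type package, and observe that every order-$6$ element whose square is a long root element is conjugate to $ys_i$ for some involution class representative $s_i$ of $S$; one then searches for a random conjugate $x^g$ with $\langle x,x^g\rangle$ nonsolvable. So the premise that $E_7(3)$ and $E_8(3)$ are ``far too large for a direct computation'' is precisely what the paper's method circumvents, and your proposal replaces a finite computation by a structural argument that you yourself leave unfinished.

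The unfinished step is not merely delicate; as stated, the reduction fails for certain involution classes. Write $x=x_sx_u$ with $x_s=x^3$ and $x_u=x^4$ as you do, and suppose $x_s$ is an involution whose centralizer is a subsystem subgroup in which $x_s$ is \emph{central} --- for instance the class with $C_{G_0}(x_s)$ of type $B_4$ (a spin group) in $F_4(3)$, or of type $D_8$ (a half-spin group) in $E_8(3)$. Then the natural classical subgroup $L$ containing the commuting pair is this centralizer, $x_s\in Z(L)$, and the image of $x$ in the corresponding simple orthogonal quotient is just the long root element $x_u^{\pm 1}$ of order $3$ --- which is exactly an exception in Theorem~\ref{thmA*}. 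Consequently no conjugate $x^g$ with $g\in L$ can yield a nonsolvable $\langle x,x^g\rangle$ (the group $\langle x,x^g\rangle$ is a central extension by $\langle x_s\rangle$ of a solvable group), so the earlier classical lemmas cannot be invoked for $(x,L)$ and the minimality argument gives nothing. To rescue the strategy you would have to produce, for each such involution class, a \emph{different} proper subgroup in which $x$ retains projective order $6$ with $x^2$ a transvection, or else fall back on a direct computation in the ambient group --- which is what the paper does. Until that case analysis is carried out class by class (using the involution centralizer tables in \cite{GLS} and the fusion of long root elements into subsystem subgroups), the proposal does not constitute a proof.
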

\begin{proof} 
  We can verify the Lemma using MAGMA. For the
smaller groups, we can calculate the conjugacy classes directly. For the larger groups we can use the Groups of Lie type package in MAGMA to construct a Sylow $2$-subgroup $S$ of $C=C_G(y)$, where $y$ is a long root element. We can then calculate the conjugacy classes of $S$, to find the class representatives $s_1, s_2, \ldots s_k$ of involutions in $S$. Then every element $x$ of order $6$ in $G$ such that $x^2$ is a long root element is conjugate to at least one element in  $\{y s_1, ys_2, \ldots, ys_k\}$. Now for each $x = ys_i$, we can search for a random conjugate $x^g$ such that $\langle x,x^g\rangle$ is not solvable. 
% For the larger
%groups, it is better to construct representatives for order $6$
%classes using the information in \cite[Table 4.5.1]{GLS}. That is,
%since we may assume that $x$ is the commuting product of an involution $t$ and a long
%root element, we can take a representative for each class of
%involutions. The long root element will lie inside the
%centralizer of the involution and there are very few
%$C_G(t)$-classes of long root elements \cite[pg 103]{GLS}. This
%gives an exhaustive list of representatives for possible
%exceptions of order $6$ (possibly with some representatives in the
%same class). 
\end{proof}

 \begin{lemma} 
 If $x \not\in \Inndiag(G_0)$, then $(x,G)$ is not a minimal counterexample. \end{lemma}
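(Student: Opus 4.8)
The plan is to reduce to the inner-diagonal cases already settled above by first pinning down the outer part of $x$ and then peeling off a smaller invariant subgroup on which $x$ still acts with order $6$. By Theorem~\ref{thmA*} we may assume that $G_0$ is a group of Lie type with $q=3$ and $x^2$ a long root element, or that $G_0\cong\PSU(d,2)$ and $x^2$ a pseudoreflection. In either case $x^2\in G_0$, so $t:=x^3$ satisfies $t^2=1$ and $t\notin\Inndiag(G_0)$; thus $t$ is an outer involution commuting with the long root element (resp. pseudoreflection) $u:=x^2$, and $x=tu^{-1}$. By \cite[7.2]{GL} I may take $t$ to be standard. The field of definition is small, which constrains $t$ sharply: over $\mathbb{F}_3$ the untwisted groups have no nontrivial field automorphisms, so $t$ is a graph involution, and such involutions occur only for $G_0$ of type $A_{d-1}(3)$ ($d\ge 3$), $D_m(3)$, $E_6(3)$, or as the special graph involution of $G_2(3)$; the twisted groups $\PSU(d,3)$, ${}^2D_m(3)$, ${}^2E_6(3)$ carry an outer field involution, as does $\PSU(d,2)$, while ${}^3D_4(3)$ and ${}^2G_2(3)$ have field automorphisms of order divisible only by $3$ and hence admit no outer involution. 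Only these finitely many families survive.

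The crucial point is that I will \emph{not} pass to the fixed-point subgroup $C_{G_0}(t)$, on which $x$ would act merely as the order-$3$ element $u^{-1}$; instead I peel off a proper $\langle x\rangle$-invariant subgroup $H<G_0$ of the same type but smaller rank, chosen so that $u\in H$ is still a long root element (resp. pseudoreflection) of $H$ and $t$ restricts to the analogous outer involution of $H$. Since $u\in C_{G_0}(t)$, the support of $u$ is $t$-rational, which is exactly what makes such an $H$ available. Then $x$ induces on the almost simple group $\langle H,t\rangle$ an automorphism of order $6$ whose square is again a long root element (resp. pseudoreflection), so minimality of $(x,G)$ yields $g$ with $\langle x,x^g\rangle$ nonsolvable.

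Concretely, in the linear case I take $t$ to be the graph involution attached to a nondegenerate form $B$ and let $H=\GL(W)$ for a small nondegenerate $t$-invariant subspace $W$ carrying the support of $u$, reducing to $L^{\epsilon}(3,3)$ or $L^{\epsilon}(4,3)$; the orthogonal groups $D_m(3)$ and the field-involution cases $\PSU(d,3)$, $\PSU(d,2)$, ${}^2D_m(3)$ reduce in the same way by splitting off a small nondegenerate summand containing $u$ and keeping $t$ as the graph, respectively field, involution of the resulting $\SU$, $\Sp$, or $\Om$ subgroup. For $E_6^{\epsilon}(3)$ I peel off a graph- (resp. field-)invariant subsystem subgroup of classical type $A_5(3)$ or $D_5(3)$ containing $u$ and then reduce once more inside that classical group. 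The residual base cases that admit no such reduction, namely $L^{\epsilon}(3,3)$, $L^{\epsilon}(4,3)$, $\PSU(3,3)$, $\POm^{\pm}(d,3)$ and $\POm(d,3)$ for small $d$, $\PSU(4,2)$, $\PSU(5,2)$, $G_2(3)$, and the $D_4(3)$ configuration (where the triality of $D_4$ complicates the classification of order-$2$ graph involutions), I dispatch in MAGMA by listing the conjugacy classes of order-$6$ outer elements and exhibiting for each a conjugate $x^g$ with $\langle x,x^g\rangle$ nonsolvable, equivalently (via \cite{Thompson}) producing $a,b,c$ of pairwise coprime orders with $abc=1$.

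The main obstacle is the bookkeeping in the reduction step: for each surviving family and each admissible type of $t$ I must check that a proper $\langle x\rangle$-invariant subgroup of the same type exists in which the commuting pair $(t,u)$ still presents as (outer involution, long root element or pseudoreflection), and that $t$ genuinely restricts to a graph or field involution rather than collapsing to an inner-diagonal automorphism of $H$. The delicate cases are the twisted and twisted-exceptional groups $\PSU(d,3)$, $\PSU(d,2)$, ${}^2D_m(3)$, ${}^2E_6(3)$, where I must track the interaction of the field involution with the Hermitian or quadratic form, and the $D_4$-triality configuration, where identifying the relevant order-$2$ graph involutions and their invariant subsystems requires the explicit class representatives of \cite[Table 4.5.1]{GLS}. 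Once these are in place, the reductions together with the finite MAGMA verifications complete the proof.
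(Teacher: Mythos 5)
Your plan is essentially the paper's proof: identify $t=x^3$ as the (necessarily graph/``field'') outer involution commuting with the unipotent element $u=x^2$, cut down to a proper $\langle x\rangle$-invariant subgroup of smaller rank on which $u$ remains a transvection/long root element/pseudoreflection and $t$ remains outer, invoke minimality, and finish the small base cases and the exceptional groups in MAGMA. The paper executes the reduction exactly as you sketch for $L^{\epsilon}(d,3)$ (conjugating by $h\in C_G(x^3)$ to move the centre $a$ of the transvection into the $x^3$-invariant nondegenerate subspace of codimension $1$ or $2$), but two of your cases are handled differently there: for the class of graph involutions of $\PSU(d,2)$, $d$ even, with centralizer $C_{\Sp(d,2)}(t_0)$, the paper does not peel off a smaller unitary group at all --- it observes that $x$ already lies in the smaller almost simple group $\Sp(d,2)\langle x^3\rangle$, sidestepping the verification that $t$ restricts to a graph involution of a $\GU(d-1,2)$ containing the support of the pseudoreflection (the one spot where your uniform recipe is least obviously available); and for $E_6^{\epsilon}(3)$ (and $G_2(3)$, $D_4(3)$) the paper simply computes in MAGMA rather than descending through an $A_5$ or $D_5$ subsystem subgroup. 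These are execution details rather than a different method; your acknowledged ``bookkeeping'' is precisely where the paper's citations to the explicit class representatives in \cite[Table 4.5.1]{GLS}, \cite{Moufang}, and \cite{LS} do the work.
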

\begin{proof} 
 Suppose that $x^2$ is a transvection and $x^3$ is a graph
automorphism. If $G_0 \cong PSL(d,3)$ and $d$ is even, then there
are three classes of graph automorphism. Representatives for the
three classes are $\iota S$, $\iota S^{+}$ and $\iota S^-$ where
%\begin{displaymath}
%S := \left  ( \begin{matrix}
%0 & -1 &&&&&&& \\
%                             1 & 0 &&&& &&&\\
%                           &   & 0 & -1 &&&&& \\
%                &&  1&0 &&&&& \\
%&&&&&&&& \\
%&&&&&\ddots&&& \\
%&&&&&&\ddots&& \\
%&&&& &&& 0& -1 \\
%&&&& &&& 1& 0 \\
%                \end{matrix}
% \right )
%\end{displaymath}
%\begin{displaymath}
%S^+ := \left  ( \begin{matrix}0 & 1 &&&&&&& \\
%                             1 & 0 &&&& &&&\\
%                           &   & 0 & 1 &&&&& \\
%                &&  1&0 &&&&& \\
%&&&&&&&& \\
%&&&&&\ddots&&& \\
%&&&&&&\ddots&& \\
%&&&& &&& 0& 1 \\
%&&&& &&& 1& 0 \\
%                \end{matrix}
% \right )
%\end{displaymath}
%\begin{displaymath}
%S^- := \left  ( \begin{matrix}
%0 & 1 &&&&&&& \\
%                             1 & 0 &&&& &&&\\
%                           &   & 0 & 1 &&&&& \\
%                &&  1&0 &&&&& \\
%&&&&&&&& \\
%&&&&&\ddots&&& \\
%&&&&&&\ddots&& \\
%&&&& &&& -1& 0 \\
%&&&& &&& 0& 1 \\
%                \end{matrix}
% \right )
%\end{displaymath}
 $\iota$ is the inverse transpose automorphism. Their
centralizers are of type $Sp(d,3)$, $O^+(d,3)$ and $O^-(d,3)$
respectively. Now $x=x^4x^3$, and $x^4$ is a transvection so there exists $a \in
V$ such that
\begin{displaymath}
x^4 : v \rightarrow v+ f(v)a,
\end{displaymath}
where $f$ is a linear functional on $V$, $\dim \ker{f}=
n-1$, and $f(a)=0$. Now all of the graph automorphisms above
stabilize a subgroup of type $GL(d-2,3) \times GL(2,3)$.
%consisting of matrices of the form
%\[ \left(\begin{matrix}
%&&&\\
%&A&&\\
%&&&\\
% &&&B\\
%\end{matrix} \right) \]
% where $A \in GL(n-2,3)$ and $B \in GL(2,3)$. 
Now we can  conjugate $x$ by
$h \in C_G(x^3)$ (of type $Sp(d,3)$, $O^{\pm}(d,3)$) so that $h(a) \in U$ where $U$ is the subspace
of $V$ corresponding to the subgroup $GL(d-2,3)$. %This relies on the fact that Sp is transitive on nonzero vectors in V; O is transitive on vectors with the same value of the quadratic form Q
 Thus we can consider $x$
acting as an automorphism on $GL(d-2,3)$. There is only one class of graph involutions
when $d$ is odd, with representative $\iota$, and centralizer of type $O(d,3)$. We can make the same
reduction here. So it suffices to deal with cases where $d \le 4$. It is
easy to eliminate these cases in MAGMA.\par
	
The case where $G_0\cong PSU(d,3)$ is very similar. In this case, $x^4$ is a unitary transvection 
and $x^3$ is a graph automorphism. The classes of involutory graph automorphisms are described in \cite[Table 4.5.1]{GLS}. When $d$ is even, there are three classes as in the linear case with centralizers of type $O^{+}(d,3)$, $O^{-}(d,3)$, and $Sp(d,3)$. These classes are described explicitly in \cite[pg 43]{Moufang} and \cite[pg 288]{LS}, and we see that each class normalizes a subgroup of type $GU(d-2,3) \times GU(2,3)$ or $GU(d-1,3)\times GU(1,3)$. In particular, there exists an $x^3$ invariant, nondegenerate subspace $U$ of $V$  of dimension $d-2$ or $d-1$. Moreover, as in the linear case we can take $h \in C_G(x^3)$ such that $h(a) \in U$, and so we may assume that $x^4$ acts a unitary transvection on $U$. Similarly, when $d$ is odd, there is only one class of graph involutions and \cite{Moufang} and \cite{LS} show that $x^3$ normalizes a subgroup of type $GU(d-1,3)$; thus we may assume that $x$ normalizes this subgroup and has order $6$ on it. Therefore, it suffices to check the cases $G_0=PSU(4,3)$ and $G_0=PSU(3,3)$ in MAGMA.  

Next, suppose that $G_0 \cong PSU(d,2)$, $x^3$ is an involutory graph automorphism, and $x^4$ is a pseudoreflection. If $d$ is odd, then there is one class of graph involutions, and we may therefore assume that $x^3$ acts as a standard field automorphism on matrix entries, with centralizer of type $O^{+}(d,2)$ (see \cite[19.9]{AS}). Thus, conjugating by $h \in C_G(x^3)$ if necessary, we may assume that $x$ will normalize a subgroup of type $GU(d-1,2)$, and $x$ will act as an element of order $6$ on it. %See test.tex
 If $n$ is even, then there are two classes of graph involutions, with centralizers of type $Sp(d,2)$ and $C_{Sp(d,2)}(t)$ where $t$ is a transvection in $Sp(d,2)$. In the first case, we may assume that $x$ acts as a standard field automorphism on the matrix entries and so $x$ will normalize a subgroup of type $GU(d-1,2)$ as before. In the other case, the pseudoreflection $x^4$ is contained in $C_{G_0}(x^3)=C_{Sp(d,2)}(t)$; moreover, $C_{G_0}(x^3)$ is contained in a subgroup $Sp(d,2)\langle x^3 \rangle = Sp(n,2) \times \langle x^3t \rangle$ (see \cite[pg 288]{LS}),
 % if $y \in C_{Sp}(t)$, taking the Sp in the construction above, then $y \in Sp$ and $y$ so commutes with $x^3t$, by the construction in \cite{LS}, and with $t$, so it commutes with $x^3$  
  and $x^3$ acts nontrivially, as an inner automorphism on $Sp(d,2)$. Thus $x \in Sp(d,2)\langle x^3 \rangle$, which is a smaller almost simple group. Combining the $d$ odd and $d$ even cases, it suffices to check that the theorem is true for $G_0 \cong PSU(4,2)$, which is easy to do in MAGMA.  

The cases when $x^3$ is a graph automorphism in an orthogonal group have already been considered, and the cases of graph automorphisms of exceptional groups are verified in MAGMA in the same way as with the other cases.
 \end{proof}
 This completes the proof of Theorem \ref{6}. We now prove Corollary \ref{cor:to9}.
 \begin{proof}[Proof of Corollary \ref{cor:to9}]
We may assume that $G$ has trivial solvable radical and that $x$ has order $9$. In particular, the Fitting subgroup $F(G)$ of $G$ is trivial and Lemma \ref{lem:1.1} applies. Let $F^{*}(G)$ be the generalized Fitting subgroup of $G$. Since $C_G(F^*(G))= \{1\}$ and $F(G)=\{1\}$, $x^3$ cannot centralize all of the components of $G$. So choose a component $L$ such that $x^3 \not\in C_G(L)$. If $x$ does not normalize $L$ then Lemma  \ref{lem:1.1} shows that there exists $g \in G$ such that $\langle x,x^g\rangle$  is not solvable. If $x$ does normalize $L$, then $x$ is an order $9$ element in the almost simple group $\langle x, L\rangle$ and Theorem \ref{6} implies the result.
\end{proof}

\begin{remark} \label{cor:to6}
In the introduction, we noted that the analogous result to Corollary \ref{cor:to9} for  order 6 elements is not true; for example if we take $G=S_5 \times PSL(3,3)$. However, in some sense, there are not many such examples. Let $G$ be such an example; we may assume that the solvable radical is trivial. Reasoning in the same way as in the proof of Corollary \ref{cor:to9}, if $x$ normalizes one of the components $L$, then $x$ must either act as an involution on $L$ or $L$ is one of the groups in Theorem \ref{thmA*} and $x$ acts as an element of order $3$ on $L$ (and as one of the exceptional elements in Theorem \ref{thmA*}). If $x$ does not normalize one of the components $L$ then $x^2$ must centralize $L$ by Lemma \ref{lem:1.1}. In particular, the orbits of the components under the action of $x$ must have length at most $2$.
\end{remark}

\end{document}